\newcommand{\msc}[2][2000]{%
  \let\@oldtitle\@title%
  \gdef\@title{\@oldtitle\footnotetext{#1 \emph{Mathematics subject
        classification.} #2}}%
}
\theoremstyle{plain}
\newtheorem{theorem}{Theorem} [section]
\newtheorem{lemma}[theorem]{Lemma}
\newtheorem{corollary}[theorem]{Corollary}
\newtheorem{proposition}[theorem]{Proposition}
\theoremstyle{remark}
\newtheorem{remark}[theorem]{Remark}
\def\le{\leqslant}
\def\ge{\geqslant}
\def\Eq#1#2{\mathop{\sim}\limits_{#1\rightarrow#2}}
\def\Tend#1#2{\mathop{\longrightarrow}\limits_{#1\rightarrow#2}}
\DeclareMathOperator{\RE}{Re}
\DeclareMathOperator{\IM}{Im}
\numberwithin{equation}{section}
\begin{document}

\title[Universal dynamics for the logarithmic NLS]{Universal  
dynamics for the  defocusing logarithmic Schr\"odinger equation} 

\author[R. Carles]{R\'emi Carles}
\address{CNRS, IMAG, Montpellier, France}
\email{Remi.Carles@math.cnrs.fr}

\author[I. Gallagher]{Isabelle Gallagher}
\address{D\'epartement de math\'ematiques et applications\\ \'Ecole normale
  sup\'erieure, CNRS, PSL Research University\\ 75005 Paris, France\\ \& 
UFR de math\'ematiques\\ Universit\'e Paris-Diderot, Sorbonne Paris-Cit\'e\\
75013 Paris, France} 
\email{Isabelle.Gallagher@ens.fr}

\begin{abstract}
  We consider the nonlinear Schr\"odinger equation with a logarithmic
  nonlinearity in a dispersive regime. We show that the presence of the nonlinearity
  affects the large time behavior of the solution: 
  the dispersion is faster than usual 
  by a logarithmic factor in time and the positive Sobolev norms of
  the solution 
  grow logarithmically in time. Moreover, after rescaling in space by the dispersion   rate, the modulus of the solution converges to a universal Gaussian
  profile. These properties are suggested by explicit computations in
  the case of Gaussian initial data, and remain when
    an extra power-like nonlinearity is present in the equation.
One of the key steps of the proof consists in working
  in hydrodynamical variables to reduce the equation to a variant of 
  the isothermal compressible Euler equation, whose large time
  behavior turns out to be governed by a parabolic equation involving a
   Fokker--Planck operator.
\end{abstract}
\thanks{RC is supported by the French ANR projects
   BECASIM
  (ANR-12-MONU-0007-04) and BoND (ANR-13-BS01-0009-01).}
\keywords{Nonlinear Schr\"odinger equation, logarithmic nonlinearity,
 global  attractor, 
  Sobolev norms, Euler equation,
  Fokker-Planck operator}
\maketitle

\section{Introduction}
\label{sec:introduction}

\subsection{Setting}
\label{sec:setting}

We are interested in the following equation
\begin{equation}
  \label{eq:logNLS}
  i{\partial}_t u +\frac{1}{2} \Delta u =\lambda \ln\left(|u|^2\right)u \, ,\quad u_{\mid t=0} =u_0 \, ,
\end{equation}
with $x\in {\mathbb R}^d$, $d\ge 1$, $\lambda\in {\mathbb R}\setminus\{0\}$. 
It was introduced as a model of
nonlinear wave mechanics and in nonlinear optics (\cite{BiMy76}, see also
\cite{BSSSSC03,Hef85,HeRe80,KEB00,DFGL03}).
The mathematical study of this equation goes back to
\cite{CaHa80,Caz83} (see also 
\cite{CazCourant}).  The sign $\lambda<0$ seems
to be the more interesting from a physical point of view, and this
case has been
studied formally and rigorously (see~\cite{AMS14,HeRe80} for instance). On the other hand, the case
$\lambda>0$ seems to have been little studied mathematically, except
as far as the Cauchy problem is concerned (see
\cite{CaHa80,GLN10}). In this article, we address the large time
properties of the solution in the case $\lambda>0$, revealing several
new features in the context of Schr\"odinger equations, and more generally Hamiltonian dispersive equations. 
\medskip

\noindent 
We recall that   the mass, angular momentum and  energy are (formally)
conserved, in the sense that defining 
\begin{align*}
  & M(u(t)) :=\|u(t)\|_{L^2({\mathbb R}^d)}^2 \, , \\
& J(u(t)):=\IM \int_{{\mathbb R}^d}\bar u(t,x)\nabla u(t,x)dx\,,\\
& E(u(t)):=\frac{1}{2}\|\nabla
  u(t)\|_{L^2({\mathbb R}^d)}^2+\lambda\int_{{\mathbb R}^d} |u(t,x)|^2\ln |u(t,x)|^2dx \, , 
\end{align*}
then formally,
\begin{equation*}
  \frac{d}{dt}M(u(t))=\frac{d}{dt}J(u(t))=\frac{d}{dt}E(u(t))=0 \, .
  \end{equation*}
The last identity reveals the Hamiltonian structure of
\eqref{eq:logNLS}. 

\begin{remark}[Effect of scaling factors]\label{rem:scaling}
  Unlike what happens in the case of an homogeneous nonlinearity
  (classically of the form $|u|^{p}u$), replacing $u$ with $\kappa u$
  ($\kappa>0$) 
  in \eqref{eq:logNLS}  has only little effect, since we have
  \begin{equation*}
    i{\partial}_t (\kappa u) +\frac{1}{2} \Delta (\kappa u) =\lambda
    \ln\left(|\kappa u|^2\right)\kappa u - 2\lambda (\ln \kappa )\kappa u \, .
  \end{equation*}
The scaling factor thus corresponds to a purely time-dependent gauge
transform:
\begin{equation*}
  \kappa u (t,x) e^{2it \lambda \ln \kappa}
\end{equation*}
solves \eqref{eq:logNLS} (with initial datum $\kappa u_0$). In
particular, the $L^2$-norm of the initial datum does not influence the
dynamics of the solution. This suggests that the presence of the
  nonlinearity has a 
rather weak influence on the dynamics, since it shares a feature with
linear equations. We will see however that the effects of the
nonlinear term are, from various perspectives, much stronger than the
effects of a defocusing power-like nonlinearity.
Nevertheless, if we denote by $u_\kappa$ the above solution, we readily
compute
\begin{equation*}
  \frac{du_\kappa }{d\kappa}= \left(1+2it\lambda\right)u_1e^{2it \lambda \ln \kappa},\quad 
\frac{d^2u_\kappa }{d\kappa^2}=
\frac{2it\lambda}{\kappa}\left(1+2it\lambda\right)u_1e^{2it \lambda \ln
  \kappa}. 
\end{equation*}
This shows that for any $t\not =0$, the flow map $u_0\mapsto u(t)$
fails to be $C^2$ at the origin, whichever the functional spaces 
considered to measure this regularity. 
\end{remark}
Note that whichever the sign of $\lambda$, the energy $E$ has no
definite sign. The distinction between focusing or defocusing
nonlinearity is thus a priori ambiguous. We shall see however that in
the case $\lambda <0$, no solution  is dispersive, while for~$\lambda
>0$,  solutions have a dispersive behavior (with a non-standard
rate of dispersion). This is why we choose to call \emph{defocusing}
the case $\lambda >0$. 
\subsection{The focusing case}
\label{sec:focusing}

In \cite{CaHa80} (see also \cite{CazCourant}), the Cauchy problem is
studied in the case $\lambda <0$. 
Define
\begin{equation*}
  W:= \Big \{ u\in H^1({\mathbb R}^d) \, , \,  x\mapsto |u(x)|^2\ln |u(x)|^2\in L^1({\mathbb R}^d)\Big\} \, . 
\end{equation*}
\begin{theorem}[Th\'eor\`eme~2.1 from \cite{CaHa80}, see also
  Theorem~9.3.4 from \cite{CazCourant}] Let the initial data~$u_0$
  belong to~$ W$.     In the case when~$\lambda <0$, there exists a
  unique, global  
  solution $u\in C({\mathbb R};W)$ to \eqref{eq:logNLS}. In particular, for all
  $t\in {\mathbb R}$, $|u(t,\cdot)|^2\ln|u(t,\cdot)|^2$ belongs to~$ L^1({\mathbb R}^d)$, and the
  mass~$M(u)$ and the energy $E(u)$ are independent of time. 
\end{theorem}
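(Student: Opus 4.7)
The plan is to proceed in four stages: regularize the singular nonlinearity; solve the regularized Cauchy problem in $H^1$; extract a uniform $H^1$ bound via Gross's logarithmic Sobolev inequality; and derive uniqueness from a monotonicity identity specific to the logarithm. Since $z\mapsto z\ln|z|^2$ is not locally Lipschitz near $z=0$, I would first replace it by a smooth regularization such as $f_\varepsilon(z):=z\ln(\varepsilon+|z|^2)$, which is globally Lipschitz on bounded sets and returns the original nonlinearity as $\varepsilon\to 0$. On $H^1({\mathbb R}^d)$ the associated Cauchy problem is solved by a standard Kato--Strichartz fixed point, producing for each $\varepsilon>0$ a unique maximal $u_\varepsilon\in C([0,T_\varepsilon);H^1)$ along which the regularized mass and energy $E_\varepsilon$ are conserved.

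The decisive estimate is the logarithmic Sobolev inequality: for every $a>0$ and $f\in H^1({\mathbb R}^d)$,
\begin{equation*}
\int_{{\mathbb R}^d} |f|^2 \ln\frac{|f|^2}{\|f\|_{L^2}^2}\,dx \le a^2\|\nabla f\|_{L^2}^2 + C(a,d)\|f\|_{L^2}^2.
\end{equation*}
Combined with the $\varepsilon$-analogue of the identity $\tfrac{1}{2}\|\nabla u\|_{L^2}^2 = E(u_0) - \lambda\int |u|^2\ln|u|^2\,dx$ (whose right-hand side has no definite sign when $\lambda<0$, so a priori the nonlinear term competes with the Dirichlet energy) and applied with $a$ so small that $2|\lambda|a^2<1$, this yields $\|u_\varepsilon(t)\|_{H^1}\le C(\|u_0\|_W)$ uniformly in $t\in{\mathbb R}$ and in $\varepsilon$; in particular $u_\varepsilon$ is global. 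To pass to the limit, the uniform $H^1$ bound and the equation give $\partial_t u_\varepsilon$ bounded in $H^{-1}$; Aubin--Lions then provides strong convergence in $L^2_{\mathrm{loc}}$, and since $f_\varepsilon(z)\to z\ln|z|^2$ pointwise with a uniform domination of the form $|f_\varepsilon(z)|\lesssim |z|(1+|\ln|z||)$, one identifies the limit $u$ as a distributional solution of \eqref{eq:logNLS} lying in $L^\infty({\mathbb R};H^1)\cap C({\mathbb R};H^1\text{-weak})$.

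Uniqueness is the most delicate step. The nonlinearity is not Lipschitz in any usable norm, but one has the pointwise inequality
\begin{equation*}
\bigl|\IM\bigl((z_1\ln|z_1|^2 - z_2\ln|z_2|^2)(\bar z_1-\bar z_2)\bigr)\bigr| \le C|z_1-z_2|^2,\qquad z_1,z_2\in{\mathbb C},
\end{equation*}
which is a consequence of the monotonicity of $s\mapsto s\ln s$. Given two solutions $u_1,u_2$ with the same initial datum, the difference $w=u_1-u_2$ then satisfies, after pairing the difference of the equations with $\bar w$ and taking the imaginary part, $\tfrac{d}{dt}\|w\|_{L^2}^2\le 2C|\lambda|\|w\|_{L^2}^2$, and Gronwall gives $w\equiv 0$. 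Strong continuity in $H^1$ and membership of $|u|^2\ln|u|^2$ in $C({\mathbb R};L^1)$ (hence $u\in C({\mathbb R};W)$) follow from the uniform $H^1$ bound, the conservation of $E$, and a splitting of the logarithmic integrand between $\{|u|\le 1\}$ (controlled via $||u|^2\ln|u|^2|\le C|u|^{2-\delta}$ and H\"older) and $\{|u|>1\}$ (controlled via $\ln|u|^2\le C_\delta|u|^\delta$ and Sobolev).

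The main obstacle throughout is the singular character of $u\ln|u|^2$: no direct fixed-point argument works on $H^1$, and since the energy has no definite sign it does not automatically control $\|\nabla u\|_{L^2}^2$. Both existence (through Gross's logarithmic Sobolev bound) and uniqueness (through the Cazenave-type monotonicity inequality) rely on estimates that exploit the specific structure of the logarithm and have no analogue for typical power-like nonlinearities.
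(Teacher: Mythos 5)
You should first be aware that the paper does not prove this statement: it quotes it from \cite{CaHa80} and \cite{CazCourant}, and its own Cauchy theory (Section~\ref{sec:cauchy}) treats the opposite sign $\lambda>0$ in the different space ${\mathcal F}(H^\alpha)\cap H^1$. That said, your skeleton --- regularize the nonlinearity to $z\ln(\varepsilon+|z|^2)$, solve the regularized problem, extract uniform bounds, pass to the limit by compactness, and prove uniqueness from the pointwise inequality on $\IM\big((z_1\ln|z_1|^2-z_2\ln|z_2|^2)(\bar z_1-\bar z_2)\big)$ --- is exactly the classical one; your uniqueness inequality is precisely Lemma~\ref{lemunique} (Lemma~9.3.5 of \cite{CazCourant}), which the paper also uses for $\lambda>0$. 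The logarithmic Sobolev inequality is a legitimate (if non-standard) way to get the uniform $H^1$ bound for $\lambda<0$, since it bounds $\int|u|^2\ln|u|^2$ from above, which is what the sign $\lambda<0$ requires.

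There is, however, a genuine gap in the step where you place $u(t)$ in $W$, i.e.\ prove $|u(t)|^2\ln|u(t)|^2\in L^1({\mathbb R}^d)$. You propose to control the region $\{|u|\le 1\}$ via $\big||u|^2\ln|u|^2\big|\le C|u|^{2-\delta}$ ``and H\"older'', but $\|u\|_{L^{2-\delta}({\mathbb R}^d)}$ is \emph{not} controlled by $\|u\|_{H^1}$ on the whole space: a function behaving like $|x|^{-d/2}(\ln|x|)^{-1}$ at infinity lies in $H^1({\mathbb R}^d)$ while $|u|^2\big|\ln|u|^2\big|\sim d\,|x|^{-d}(\ln|x|)^{-1}\notin L^1$. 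So the uniform $H^1$ bound alone does not yield $u(t)\in W$, and no H\"older argument can close this without extra decay (this is exactly why the paper's $\lambda>0$ theory must assume $u_0\in{\mathcal F}(H^\alpha)$ and invoke the weighted interpolation inequality \eqref{interpolationinequality} in Lemma~\ref{lem:apv}). The correct mechanism for $\lambda<0$ is structural: the dangerous term enters the conserved energy with a \emph{favorable} sign. Writing $\lambda=-|\lambda|$,
\begin{equation*}
\frac12\|\nabla u\|_{L^2}^2+|\lambda|\int_{\{|u|<1\}}|u|^2\ln\frac{1}{|u|^2}\,dx
= E(u_0)+|\lambda|\int_{\{|u|>1\}}|u|^2\ln|u|^2\,dx
\le E(u_0)+C(M)\|\nabla u\|_{L^2}^{d\delta/2},
\end{equation*}
by Gagliardo--Nirenberg, and since $d\delta/2<2$ for $\delta$ small this single inequality bounds both $\|\nabla u(t)\|_{L^2}$ and $\int_{\{|u|<1\}}|u|^2|\ln|u|^2|$ uniformly in $t$ (and, applied to the regularized flow, uniformly in $\varepsilon$). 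This is the Orlicz-norm control that makes $W$ the natural space for $\lambda<0$; it also renders the logarithmic Sobolev step unnecessary. With this replacement your argument is essentially the proof of \cite{CaHa80}.
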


In the case when~$\lambda<0$, it can be proved that there is no
dispersion for large times.  Indeed the following result  holds.
\begin{lemma}[Lemma~3.3 from \cite{Caz83}]\label{lem:energie}
  Let $\lambda<0$ and $k<\infty$ such that
\[
L_k :=\Big \{u\in W, \ \|u\|_{L^2({\mathbb R}^d)}=1,\ E(u)\le k\Big\}\not
  =\emptyset \, .
\]
 Then
  \begin{equation*}
    \inf_{\substack{u\in L_k \\ 1\le p\le \infty}}\|u\|_{L^p({\mathbb R}^d)}>0 \, . 
  \end{equation*}
\end{lemma}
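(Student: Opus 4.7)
The plan is to trap the logarithmic energy $\int|u|^2\ln|u|^2\,dx$ in a compact interval depending only on $k$, and then to propagate this entropy bound into lower bounds on every $L^p$-norm via Jensen's inequality applied to the probability density $|u|^2\,dx$.

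I would first extract an a priori bound on $\|\nabla u\|_{L^2}$. Set $A:=\|\nabla u\|_{L^2}^2$ and $B:=\int|u|^2\ln|u|^2\,dx$; the energy bound reads $\tfrac12 A+\lambda B\le k$, and since $\lambda<0$ this rewrites as $B\ge(A-2k)/(2|\lambda|)$. Coupling this with the Euclidean logarithmic Sobolev inequality, which for $\|u\|_{L^2}=1$ provides the matching upper bound $B\le\tfrac{d}{2}\ln\bigl(\tfrac{2A}{\pi d e}\bigr)$, produces
\begin{equation*}
A-2k\;\le\;|\lambda|\,d\,\ln\!\Bigl(\tfrac{2A}{\pi d e}\Bigr).
\end{equation*}
The left-hand side grows linearly while the right-hand side grows only logarithmically, so this forces $A\le A_+(k)$ for a constant depending only on $k$ (and $\lambda$, $d$). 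Plugging back in, $B$ lies in a bounded interval $[B_-(k),B_+(k)]$ with $B_-=-k/|\lambda|$ and $B_+=\tfrac{d}{2}\ln(2A_+/(\pi d e))$.

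I would then write, for $p\in[1,\infty)$, $\|u\|_{L^p}^p=\int\rho^{p/2}\,dx=\int e^{(p/2-1)\ln\rho}\,\rho\,dx$ with $\rho:=|u|^2$. Since $\rho\,dx$ is a probability measure, Jensen's inequality applied to the convex exponential yields $\|u\|_{L^p}^p\ge\exp\bigl((\tfrac{p}{2}-1)B\bigr)$. For $p\ge 2$ the coefficient $\tfrac{p}{2}-1$ is nonnegative and I would plug in $B\ge B_-$; for $1\le p<2$ it is negative and I would plug in $B\le B_+$ instead. In both regimes this gives $\|u\|_{L^p}\ge\exp\bigl((\tfrac12-\tfrac1p)B_\pm\bigr)$, which is bounded below uniformly in $p$ by a positive constant depending only on $k$, $\lambda$ and $d$. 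The endpoint $p=\infty$ follows from the pointwise bound $|u|^2\ln|u|^2\le 2|u|^2\ln\|u\|_{L^\infty}$, which integrates to $B\le 2\ln\|u\|_{L^\infty}$ and hence $\|u\|_{L^\infty}\ge e^{B_-/2}$.

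The delicate point is the first step: the energy alone supplies only a one-sided lower bound on $B$ in terms of $A$, which by itself is useless because nothing prevents $A$ from being arbitrarily large and $B$ from compensating to $-\infty$. The logarithmic Sobolev inequality is exactly what closes this loop, by providing a matching upper bound on $B$ that grows logarithmically (hence much more slowly than linearly) in $A$. Once both quantities are confined to bounded intervals, the uniform $L^p$-lower bound becomes a pure exercise in convexity.
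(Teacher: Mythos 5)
Your argument is correct, but note that the paper itself contains no proof of this lemma: it is quoted verbatim from \cite{Caz83}, so there is no in-paper argument to compare against. Your route --- the sharp Euclidean logarithmic Sobolev inequality to close the loop on $A=\|\nabla u\|_{L^2}^2$, then Jensen's inequality for the probability measure $|u|^2\,dx$ to convert entropy bounds into $L^p$ lower bounds --- is a clean, self-contained proof. Cazenave's original argument instead splits the entropy into $\int_{|u|\le 1}+\int_{|u|>1}$ and absorbs the positive part via Gagliardo--Nirenberg, $\int_{|u|>1}|u|^2\ln|u|^2\lesssim \|u\|_{L^{2+\alpha}}^{2+\alpha}\lesssim \|\nabla u\|_{L^2}^{d\alpha/2}$ with $d\alpha/2<2$; both devices serve the same purpose of showing that the kinetic term cannot be beaten by the logarithmic term, so the two proofs are structurally parallel. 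One remark on your exposition: the sentence describing the ``delicate point'' has the logic backwards. Since $\lambda<0$, the energy bound reads $|\lambda|B\ge \tfrac12 A-k\ge -k$, so the lower bound $B\ge B_-=-k/|\lambda|$ is \emph{free}, with no control on $A$ whatsoever; $B$ cannot ``compensate to $-\infty$'' while $A$ grows --- on the contrary, large $A$ forces $B$ to be large and positive. Consequently the cases $p\ge 2$ and $p=\infty$ require neither the logarithmic Sobolev inequality nor the bound $A\le A_+(k)$; the loop-closing machinery is needed only to produce the upper bound $B\le B_+(k)$, which you use for $1\le p<2$. Two further minor points that are fine as written: Jensen is applied with respect to $|u|^2\,dx$, which assigns no mass to $\{u=0\}$ where $\ln|u|^2=-\infty$, and for exponents $p$ where $\|u\|_{L^p}$ may be infinite the claimed lower bound holds trivially.
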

This lemma, along with the conservation of the energy for
\eqref{eq:logNLS}, indicates that in the case $\lambda<0$, the solution
to \eqref{eq:logNLS} is not dispersive: typically, its $L^\infty$ norm
is bounded from below.
Actually in the case of Gaussian initial data, some solutions are even
known to be periodic in time, as proved in~\cite{AMS14} (and already
noticed in \cite{BiMy76}).
\begin{proposition}[\cite{AMS14}]\label{negativeperiodic}
In the case~$\lambda <0$,   the \emph{Gausson}
\[
 \exp \left(-2i\lambda \omega t
+\omega+d/2+\lambda|x|^2\right)
\]
 is a solution 
to~\eqref{eq:logNLS} for any period~$\omega \in {\mathbb R}$.
 \end{proposition}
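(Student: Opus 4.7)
This is a direct verification, so my plan is to substitute the ansatz into \eqref{eq:logNLS} and check the equality by computing each term separately, exploiting the algebraic structure of the Gausson.

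The key observation is that the phase $-2i\lambda\omega t$ is purely imaginary (with $\lambda,\omega,t\in{\mathbb R}$), while the $x$-dependent part $\omega+d/2+\lambda|x|^2$ is real. Writing $u(t,x)=e^{-2i\lambda\omega t}\,e^{\omega+d/2+\lambda|x|^2}$, this means $|u(t,x)|^2=e^{2(\omega+d/2+\lambda|x|^2)}$ is independent of $t$ and
\begin{equation*}
  \lambda\ln\bigl(|u(t,x)|^2\bigr)\,u = 2\lambda\bigl(\omega+d/2+\lambda|x|^2\bigr)\,u.
\end{equation*}
For the dispersive side, $\partial_t u=-2i\lambda\omega\,u$, hence $i\partial_t u=2\lambda\omega\,u$.

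For the Laplacian, I would use $\nabla u=2\lambda x\,u$, which gives
\begin{equation*}
  \Delta u = \nabla\cdot(2\lambda x\,u) = 2\lambda d\,u + (2\lambda x)\cdot(2\lambda x)\,u = 2\lambda d\,u + 4\lambda^2|x|^2\,u,
\end{equation*}
so $\tfrac12\Delta u=\lambda d\,u+2\lambda^2|x|^2\,u$. Summing the two dispersive contributions yields
\begin{equation*}
  i\partial_t u+\tfrac12\Delta u = \bigl(2\lambda\omega+\lambda d+2\lambda^2|x|^2\bigr)\,u,
\end{equation*}
which is exactly the right-hand side computed above. This proves the claim.

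There is no genuine obstacle here; the only subtlety is to notice that the imaginary time phase disappears from $\ln|u|^2$, so that the nonlinear term produces a real quadratic polynomial in $x$ that balances precisely the quadratic polynomial generated by $\tfrac12\Delta$ acting on the Gaussian. The parameter $\omega$ is free because its contribution to the nonlinear term on the right is compensated exactly by the time derivative $i\partial_t$, and the additive constant $d/2$ is tuned to match the dimension-dependent contribution $\lambda d\,u$ coming from $\tfrac12\Delta u$.
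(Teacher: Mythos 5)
Your direct substitution is correct: both sides of \eqref{eq:logNLS} equal $\bigl(2\lambda\omega+\lambda d+2\lambda^2|x|^2\bigr)u$, and the paper itself offers no proof (it only cites \cite{AMS14}), so a pointwise verification of this kind is exactly the intended argument. The only point worth adding is that the hypothesis $\lambda<0$ plays no role in the algebra — it only guarantees that the Gaussian decays and hence lies in the energy space $W$, as the paper remarks immediately after the proposition.
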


We emphasize that several results address the existence
of stationary solutions to~\eqref{eq:logNLS} in the case $\lambda<0$,
and the orbital stability of the Gausson; see
e.g. \cite{BiMy76,Caz83,AMS14,Ar16}. 
We also note that when $\lambda>0$, the above formula yields a
solution to \eqref{eq:logNLS} which is $C^\infty$, time periodic, but
not even a tempered distribution.

\subsection{Main results}
\label{sec:results}
Throughout the rest of this paper, we assume $\lambda >0$.

\subsubsection{The Cauchy problem} For~$0<\alpha\le 1$, we have
\begin{equation*}
{\mathcal F}(H^\alpha):= \Big \{ u\in L^2({\mathbb R}^d) \, , \,  x\mapsto \langle x\rangle
^{\alpha}  u(x) \in L^2({\mathbb R}^d)\Big\} \, , 
\end{equation*}
where~$ \langle x\rangle := \sqrt{1+|x|^2}$ and ${\mathcal F}$ denotes the Fourier
transform (whose normalization is irrelevant here), with norm
\[
\|u\|_{{\mathcal F}(H^\alpha)}:= \|  \langle x\rangle ^{ \alpha}  u(x) \|_{L^2({\mathbb R}^d)} \,.
\]
Note that for any $\alpha>0$, ${\mathcal F}(H^\alpha) \cap H^1\subset W$.
The Cauchy problem for  \eqref{eq:logNLS} is investigated
in~\cite{GLN10}, where in three space dimensions, the
existence of a unique solution in~$L^\infty({\mathbb R};H^1({\mathbb R}^3)) \cap
C({\mathbb R};L^2({\mathbb R}^3)) $ is proved as soon as the initial data belongs
to~${\mathcal F}(H^{1/2}) \cap H^1({\mathbb R}^3)$. 
Actually it is possible to  improve slightly that result into the
following theorem. 
\begin{theorem}\label{positivecauchy}
 Let the initial data~$u_0$ belong to~$ {\mathcal F}(H^\alpha) \cap H^1({\mathbb R}^d)$
 with~$0<\alpha\le 1$. 
In the case when~$\lambda >0$,   there exists a unique, global 
  solution $u\in   L^\infty_{\rm loc} ({\mathbb R};{\mathcal F}(H^\alpha) \cap H^1)$
  to~\eqref{eq:logNLS}. Moreover  the 
  mass~$M(u)$, the angular momentum $J(u)$,  and the energy~$E(u)$ are
  independent of time.  
If in addition $u_0\in H^2({\mathbb R}^d)$, then $u\in L^\infty_{\rm loc}
({\mathbb R};H^2)$. 
\end{theorem}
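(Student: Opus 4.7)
The plan is a regularization-and-compactness scheme, exploiting two cancellation identities enjoyed by the logarithmic nonlinearity: one drives the uniform a priori bounds, the other yields uniqueness.

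I first approximate \eqref{eq:logNLS} by
$$i\partial_t u^\varepsilon + \tfrac12\Delta u^\varepsilon = \lambda\ln(\varepsilon+|u^\varepsilon|^2)\,u^\varepsilon,$$
whose nonlinearity is smooth and $H^1$-subcritical. Standard NLS theory yields local $H^1$ solutions together with conservation of mass, angular momentum, and a modified energy $E_\varepsilon$. Multiplying by $\langle x\rangle^{2\alpha}\bar u^\varepsilon$ and taking imaginary parts kills the nonlinear contribution ($|u^\varepsilon|^2$ is real) and leaves only the Laplacian--weight commutator, which for $0<\alpha\le 1$ gives
$$\frac{d}{dt}\|u^\varepsilon\|_{\mathcal{F}(H^\alpha)}^2 \le C_\alpha\|u^\varepsilon\|_{\mathcal{F}(H^\alpha)}\|\nabla u^\varepsilon\|_{L^2}.$$
For the $H^1$ control, inverting the energy yields $\|\nabla u^\varepsilon\|_{L^2}^2 \le 2E_\varepsilon(u_0) + 2\lambda\int_{\{|u^\varepsilon|\le 1\}}|u^\varepsilon|^2\bigl|\ln|u^\varepsilon|^2\bigr|$; the pointwise bound $|s|^2|\ln|s|^2|\le C_\delta |s|^{2-\delta}$ for $|s|\le 1$, combined with a weighted H\"older inequality (valid for $\delta<4\alpha/(d+2\alpha)$), controls this integral by $C\|u^\varepsilon\|_{\mathcal{F}(H^\alpha)}^{2-\delta}$. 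The two inequalities close into a polynomial-in-time Gronwall bound, uniform in $\varepsilon$, on $L^\infty_{\rm loc}({\mathbb R};\mathcal{F}(H^\alpha)\cap H^1)$.

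Passing to the limit uses Aubin--Lions compactness (the equation bounds $\partial_t u^\varepsilon$ in a weaker norm): some subsequence converges strongly in $L^2_{\rm loc}$ and almost everywhere to some $u$, and the nonlinear term converges by dominated convergence given the above integrability. Conservation of mass, angular momentum, and energy for $u$ then follow from lower semicontinuity. For uniqueness, the pointwise identity
$$\bigl|\IM\bigl[(\ln|z_1|^2z_1-\ln|z_2|^2z_2)(\bar z_1-\bar z_2)\bigr]\bigr|\le C|z_1-z_2|^2,$$
whose proof reduces to bounding $(\ln|z_2|^2-\ln|z_1|^2)\IM[z_1\bar z_2]$ by elementary estimates on $\ln(r_1/r_2)$, gives $\frac{d}{dt}\|u_1-u_2\|_{L^2}^2\le C\|u_1-u_2\|_{L^2}^2$ and hence uniqueness by Gronwall.

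For propagation of $H^2$ regularity when $u_0\in H^2$, I approximate $u_0$ by Schwartz data in $H^2$ and adapt the above energy-type estimates to second-derivative level. The key observation is that the potentially singular term $(\nabla\ln(\varepsilon+|u|^2))u$ is pointwise bounded by $2|\nabla u|$ (via the hydrodynamical decomposition $u=\rho e^{i\phi}$), uniformly in $\varepsilon$, and that $\ln(\varepsilon+|u|^2)\,\nabla^2 u\cdot\overline{\nabla^2 u}$ is pointwise real and so drops from the imaginary-part estimate; combined with the previously established $H^1$ bounds, this closes a Gronwall estimate for $\|u^\varepsilon\|_{H^2}^2$ uniform in the approximation parameters, and the bound passes to the limit. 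The main obstacle throughout is the non-Lipschitz singularity of $z\mapsto z\ln|z|^2$ at zero, which rules out contraction-mapping arguments; everything is driven by the two cancellation structures identified above.
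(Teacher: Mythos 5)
Your existence and uniqueness arguments are sound and close to the paper's. The regularization by $\ln(\varepsilon+|u^\varepsilon|^2)$, the weighted estimate for $\|u^\varepsilon\|_{\mathcal F(H^\alpha)}$, the compactness passage to the limit, and the uniqueness via the pointwise inequality on $\IM[(z_1\ln|z_1|^2-z_2\ln|z_2|^2)(\bar z_1-\bar z_2)]$ are exactly the paper's route. Your $H^1$ bound is obtained differently: you invert the (modified) energy and control the negative part of the potential energy by $\int|u^\varepsilon|^{2-\delta}$ via the weighted interpolation inequality, closing a coupled Gronwall loop with the $\mathcal F(H^\alpha)$ estimate. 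The paper instead differentiates the equation once in space and observes that $\bigl|\tfrac{2\RE(\bar u_\varepsilon\partial_j u_\varepsilon)}{\varepsilon+|u_\varepsilon|^2}u_\varepsilon\bigr|\le 2|\partial_j u_\varepsilon|$, so the gradient satisfies a self-contained linear $L^2$ estimate with no recourse to the energy. Both work; the paper's is decoupled and slightly more economical, while yours reuses machinery needed later anyway.

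The $H^2$ step, however, has a genuine gap. Differentiating the regularized equation \emph{twice} in space produces, besides the harmless terms you identify, the terms
\begin{equation*}
\frac{2\RE(\bar u_\varepsilon\,\partial_j u_\varepsilon)}{\varepsilon+|u_\varepsilon|^2}\,\partial_k u_\varepsilon,
\qquad
\frac{2\RE(\partial_k\bar u_\varepsilon\,\partial_j u_\varepsilon)}{\varepsilon+|u_\varepsilon|^2}\,u_\varepsilon,
\qquad
\frac{4\RE(\bar u_\varepsilon\,\partial_j u_\varepsilon)\RE(\bar u_\varepsilon\,\partial_k u_\varepsilon)}{(\varepsilon+|u_\varepsilon|^2)^2}\,u_\varepsilon,
\end{equation*}
each of which is only bounded pointwise by $C\,|\nabla u_\varepsilon|^2/\sqrt{\varepsilon+|u_\varepsilon|^2}$. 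The cancellation you invoke, $|(\nabla\ln(\varepsilon+|u|^2))\,u|\le 2|\nabla u|$, is the one that saves the \emph{first}-derivative estimate, where the factor multiplying $\nabla\ln(\varepsilon+|u|^2)$ is $u$ itself; at second order that factor becomes $\nabla u$ (or the quotient acquires an extra $\nabla u$), and nothing compensates the division by $|u|$. These terms are not controlled by $\|u_\varepsilon\|_{H^2}$ uniformly in $\varepsilon$ (only by $\varepsilon^{-1/2}\|\,|\nabla u_\varepsilon|^2\|_{L^2}$), so the Gronwall estimate does not close. This is precisely why the paper proves $H^2$ propagation by estimating the \emph{time} derivative instead: $\partial_t u_\varepsilon$ obeys the same first-order-differentiated equation as $\partial_j u_\varepsilon$, hence enjoys the same $\le 2|\partial_t u_\varepsilon|$ cancellation and a uniform $L^2$ bound, after which $\Delta u=2\bigl(-i\partial_t u+\lambda u\ln|u|^2\bigr)\in L^2$ is read off from the equation (using $|u\ln|u|^2|\lesssim|u|^{1+\eta}+|u|^{1-\eta}$). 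The paper's warning that $H^k$ regularity for $k\ge 3$ is not known to propagate is a symptom of the same obstruction you run into at second order.
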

The main focus of this paper concerns   large time asymptotics of the solution. The
situation is very different 
from the~$\lambda<0$ case described above (see
Proposition~\ref{negativeperiodic}). Indeed  we can prove that (some)
solutions tend to zero in~$L^\infty$ for large time, while the~$H^s$
norm is always unbounded for $s>0$. 

As often in the context of nonlinear Schr\"odinger equations, we denote
by
\begin{equation*}
  \Sigma=H^1\cap {\mathcal F}(H^1)({\mathbb R}^d)= \{f\in H^1({\mathbb R}^d),\ x\mapsto |x|f(x)\in
  L^2({\mathbb R}^d)\}. 
\end{equation*}

\subsubsection{Long time behavior} 
We show that three new features  characterize the dynamics associated
to \eqref{eq:logNLS}:
\begin{itemize}
\item The standard dispersion of the Schr\"odinger equation, in
  $t^{-d/2}$, is altered by a logarithmic factor, in $\left(t\sqrt{\ln
    t}\right)^{-d/2}$. This acceleration is of course an effect of the
  nonlinearity.
\item All the positive Sobolev norms of the solution grow
  logarithmically in time (explicit rate), due to some discrepancy between the
  dispersive rate and a universal quadratic oscillation in space.
\item Up to a rescaling, the modulus of the solution converges for
  large time to a \emph{universal} Gaussian profile.
\end{itemize}
Before stating the general result, let us introduce the universal
dispersion rate~$\tau$ through the following lemma.
We define from now on the function
\begin{equation}\label{eq:defell}
\ell (t):= \frac{\ln \ln t}{ \ln t} \, \cdotp
\end{equation}

\begin{lemma}[Universal dispersion]\label{lem:tau}
 Consider the ordinary differential equation
\begin{equation}\label{eq:tau}
  \ddot \tau = \frac{2\lambda }{\tau} \, ,\quad \tau(0)=1\, ,\quad \dot
  \tau(0)=0\, .
\end{equation}
It has a unique solution $\tau\in C^2(0,\infty)$, and it satisfies, as
$t\to \infty$, 
\begin{equation*}
  \tau(t)= 2t \sqrt{\lambda \ln t}\Big(1+{\mathcal O}\big(\ell(t)\big) \Big) \, ,\quad \dot
  \tau(t)=2\sqrt{\lambda\ln  t}\Big(1+{\mathcal O}\big(\ell(t)\big)\Big)\, .
\end{equation*} 
\end{lemma}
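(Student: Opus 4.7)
The plan is to reduce the second-order ODE to a first-order one via an energy identity, then combine a simple quadrature with an asymptotic analysis of the resulting integral.

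First, multiplying \eqref{eq:tau} by $\dot\tau$ produces the conservation law
\begin{equation*}
\frac{d}{dt}\Bigl(\tfrac{1}{2}\dot\tau^{2}-2\lambda\ln\tau\Bigr)=0,
\end{equation*}
which together with the initial data yields $\dot\tau(t)^{2}=4\lambda\ln\tau(t)$. Since $\ddot\tau(0)=2\lambda>0$, the solution is strictly increasing near $t=0$, hence $\tau(t)\ge 1$ and $\dot\tau(t)\ge 0$ for all $t\ge 0$. Local existence and uniqueness in $C^{2}$ follow from Cauchy--Lipschitz applied to \eqref{eq:tau} on a domain where $\tau$ stays away from $0$; global existence is a direct consequence of the conservation law, since $\dot\tau$ grows at most like $\sqrt{\ln\tau}$ and $\int^{\infty}ds/\sqrt{\ln s}=+\infty$ rules out finite-time blow-up.

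Next, for $t>0$ we have $\dot\tau>0$, so the first-order equation $\dot\tau=2\sqrt{\lambda\ln\tau}$ can be solved by separation of variables, giving the implicit formula
\begin{equation*}
\int_{1}^{\tau(t)}\frac{ds}{\sqrt{\ln s}}=2\sqrt{\lambda}\,t.
\end{equation*}
I would then establish the asymptotic
\begin{equation*}
\int_{1}^{T}\frac{ds}{\sqrt{\ln s}}=\frac{T}{\sqrt{\ln T}}+\mathcal{O}\!\left(\frac{T}{(\ln T)^{3/2}}\right),\qquad T\to\infty,
\end{equation*}
by a single integration by parts based on $(s/\sqrt{\ln s})'=1/\sqrt{\ln s}-1/(2(\ln s)^{3/2})$, noting that $\int^{T}ds/(\ln s)^{3/2}$ is itself of order $T/(\ln T)^{3/2}$ by the same mechanism.

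Finally, I would invert this relation. Setting $T=\tau(t)$ gives $\tau(t)=2\sqrt{\lambda}\,t\,\sqrt{\ln\tau(t)}\bigl(1+\mathcal{O}(1/\ln\tau(t))\bigr)$; taking logarithms and running a bootstrap ($\ln\tau(t)\sim\ln t$ first, then refining) yields
\begin{equation*}
\ln\tau(t)=\ln t+\tfrac{1}{2}\ln\ln t+\mathcal{O}(1)=(\ln t)\bigl(1+\mathcal{O}(\ell(t))\bigr),
\end{equation*}
and substituting back produces the announced asymptotic for $\tau$; the one for $\dot\tau$ follows at once from $\dot\tau=2\sqrt{\lambda\ln\tau}$. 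The main technical point lies in this last inversion: the natural scale of the relative correction on $\tau(t)$ is exactly $\ln\ln t/\ln t$, coming from the mismatch between $\ln\tau(t)$ and $\ln t$, so the error term $\mathcal{O}(\ell(t))$ must be tracked carefully rather than absorbed at an intermediate stage.
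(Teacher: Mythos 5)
Your proof is correct. Both arguments start from the same first integral $\dot\tau^2=4\lambda\ln\tau$, and your quadrature $\int_1^{\tau(t)}ds/\sqrt{\ln s}=2\sqrt\lambda\,t$ together with the integration-by-parts asymptotic $\int_1^T ds/\sqrt{\ln s}=T(\ln T)^{-1/2}\bigl(1+\mathcal{O}(1/\ln T)\bigr)$ is essentially the computation the paper performs (there phrased via the change of variable $y=\sqrt{4\lambda\ln r+C_0}$ and the Dawson-function asymptotic $\int e^{x^2}dx\sim e^{x^2}/(2x)$). Where you genuinely diverge is the final step: the paper only extracts the leading order $\tau(t)\sim 2t\sqrt{\lambda\ln t}$ from the quadrature, and then obtains the $\mathcal{O}(\ell(t))$ error by introducing the explicit ansatz $\tau_{\rm eff}(t)=2t\sqrt{\ln t}$, estimating $\dot\tau-\dot\tau_{\rm eff}=\mathcal{O}(\ln\ln t/\sqrt{\ln t})$ and integrating, whereas you invert the implicit relation $\tau=2\sqrt\lambda\,t\sqrt{\ln\tau}\,(1+\mathcal{O}(1/\ln\tau))$ directly by a logarithmic bootstrap, $\ln\tau=\ln t+\tfrac12\ln\ln t+\mathcal{O}(1)$. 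Your inversion is somewhat more self-contained (no ansatz to guess, and the origin of the $\ln\ln t/\ln t$ correction is laid bare as the mismatch between $\ln\tau$ and $\ln t$), while the paper's derivative-comparison has the advantage of being the same mechanism it reuses to compare $r$ with $r_{\rm eff}$ and $r_{\rm eff}$ with $\sqrt{\alpha_0}\,\tau$ for the perturbed ODE $\ddot r=\alpha_0^2/r^3+2\lambda\alpha_0/r$, where an exact quadrature in closed form is no longer as clean. Both yield the stated error; your bootstrap must indeed be run to second order as you note, since stopping at $\ln\tau\sim\ln t$ only gives $\tau(t)=2t\sqrt{\lambda\ln t}\,(1+o(1))$.
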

We now state the main result of this paper. Denote by
\[
\gamma(x):=e^{-|x|^2/2}.
\]
 \begin{theorem}\label{theo:main}
Let $u_0\in \Sigma$,  and rescale the solution
provided by 
Theorem~{\rm\ref{positivecauchy}} to $v=v(t,y)$ by setting
\begin{equation}
  \label{eq:uv}
  u(t,x)
  =\frac{1}{\tau(t)^{d/2}}v\left(t,\frac{x}{\tau(t)}\right)
\frac{\|u_0\|_{L^2({\mathbb R}^d)}}{\|\gamma\|_{L^2({\mathbb R}^d)}} 
\exp \Big({i\frac{\dot\tau(t)}{\tau(t)}\frac{|x|^2}{2}} \Big) \, . 
\end{equation}
There exists $C$ such that for
all $t\ge 0$,
\begin{equation}\label{eq:apv}
 \int_{{\mathbb R}^d}\left(1+|y|^2+\left|\ln
    |v(t,y)|^2\right|\right)|v(t,y)|^2dy +\frac{1}{\tau(t)^2}\|\nabla_y
  v(t)\|^2_{L^2({\mathbb R}^d)}\le C \, .  
\end{equation}
 We have
moreover
\begin{equation}\label{eq:moments}
   \int_{{\mathbb R}^d}
  \begin{pmatrix}
    1\\
y\\
|y|^2
  \end{pmatrix}
|v(t,y)|^2dy\Tend t \infty 
 \int_{{\mathbb R}^d}
  \begin{pmatrix}
    1\\
y\\
|y|^2
  \end{pmatrix}
\gamma^2(y)dy \, .
\end{equation}
Finally, 
\begin{equation}\label{eq:weaklimitv}
  |v(t,\cdot)|^2 \mathop{\rightharpoonup}\limits_{t\to \infty}
  \gamma^2 
\quad  \text{weakly in }L^1({\mathbb R}^d) \, . 
\end{equation}
\end{theorem}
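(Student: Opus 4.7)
The plan is to substitute the self-similar ansatz \eqref{eq:uv} into \eqref{eq:logNLS}, use the identities $\partial_t\phi = \tfrac{1}{2}(\ddot\tau\tau - \dot\tau^2)|y|^2$ and $|\nabla_x\phi|^2 = \dot\tau^2|y|^2$ for the quadratic phase $\phi := \dot\tau|x|^2/(2\tau)$ expressed in the variable $y = x/\tau$, and exploit the choice $\ddot\tau = 2\lambda/\tau$ to cancel the net $|y|^2 v$-contribution coming from the phase. After absorbing the purely time-dependent prefactor $-\lambda d\ln\tau + 2\lambda\ln C_0$ (with $C_0 := \|u_0\|_{L^2}/\|\gamma\|_{L^2}$) into a global gauge, the reduced equation reads
\begin{equation*}
i\partial_t v + \frac{1}{2\tau^2}\Delta_y v = \lambda\bigl(\ln|v|^2 + |y|^2\bigr) v.
\end{equation*}
Lemma~\ref{lem:tau} shows that the effective dispersion $1/(2\tau^2)$ is $\mathcal O\bigl(1/(t^2\ln t)\bigr)$, so the formal stationary equation $\ln|v|^2 + |y|^2 = \mathrm{const}$, whose unique mass-normalized solution is precisely $\gamma^2$, is the natural candidate for the large-time profile. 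The factor $C_0$ in \eqref{eq:uv} is chosen so that mass conservation reads $\|v(t)\|_{L^2}^2 = \|\gamma\|_{L^2}^2$ for all $t$.

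For \eqref{eq:apv}, the conservation laws for $u$ are translated into the $(t,y)$ variables: expanding $\|\nabla u\|_{L^2}^2$ generates $\tau^{-2}\|\nabla_y v\|_{L^2}^2$ plus the phase contributions $\dot\tau^2\int|y|^2|v|^2\,dy$ and $2\dot\tau\int y\cdot\IM(\bar v\nabla_y v)\,dy$, while $\ln|u|^2 = \ln|v|^2 - d\ln\tau + 2\ln C_0$ turns the entropy integral into $-d M_0\ln\tau + C_0^2\int|v|^2\ln|v|^2\,dy + \mathrm{const}$, with $M_0 := \|u_0\|_{L^2}^2$. Using $\dot\tau^2 = 4\lambda\ln t\,(1+o(1))$ to complete the square in the phase terms, conservation of $E(u)$ becomes a uniform-in-time bound on the left-hand side of \eqref{eq:apv}; the negative part $\int|v|^2\ln_-|v|^2\,dy$ is then absorbed using the $|y|^2$-moment via a Carleman-type inequality. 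For \eqref{eq:moments}, the log-NLS virial identity
\begin{equation*}
\frac{d^2}{dt^2}\int|x|^2|u(t,x)|^2\,dx = 2\|\nabla u(t)\|_{L^2}^2 + 2d\lambda M_0,
\end{equation*}
which closes because $F(\rho) := \lambda(\rho\ln\rho - \rho)$ satisfies $\rho F'(\rho) - F(\rho) = \lambda\rho$, together with the asymptotic $\|\nabla u(t)\|_{L^2}^2 = 2d\lambda M_0\ln\tau + \mathcal O(1)$ read off the energy balance above, integrates twice to give $\int|x|^2|u|^2\,dx \sim 2d\lambda M_0\, t^2\ln t$. Dividing by $\tau^2 \sim 4\lambda t^2\ln t$ and unwinding \eqref{eq:uv} yields $\int|y|^2|v|^2\,dy \to \tfrac d 2\|\gamma\|_{L^2}^2 = \int|y|^2\gamma^2\,dy$; the first moment follows from $\tfrac d{dt}\int x|u|^2\,dx = 2J(u_0)$, whose linear growth is crushed by the division by $\tau\to\infty$.

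The main step is the weak convergence \eqref{eq:weaklimitv}. By \eqref{eq:apv}, $\{|v(t,\cdot)|^2\}_{t\ge 1}$ is tight and equi-integrable in $L^1({\mathbb R}^d)$, so every $t_n\to\infty$ admits a weak-$L^1$ accumulation $\mu$ with $\int d\mu = \|\gamma\|_{L^2}^2$, $\int y\,d\mu = 0$ and $\int|y|^2\,d\mu = \tfrac d 2\|\gamma\|_{L^2}^2$ thanks to \eqref{eq:moments}. To force $\mu = \gamma^2$, I pass to Madelung variables $v = \sqrt\rho\,e^{iS}$ and hydrodynamic velocity $U := \nabla S/\tau^2$: the continuity equation reads $\partial_t\rho + \mathrm{div}(\rho U) = 0$, while the gradient of the Hamilton--Jacobi equation produces an isothermal compressible Euler system with friction $2\dot\tau/\tau\sim 2/t$, pressure $\lambda\rho/\tau^2$, confining force $-2\lambda\rho y/\tau^2$, and a Bohm (quantum) correction of order $1/\tau^4$. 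In the overdamped regime, the momentum balance forces Darcy's law $U \simeq -\tfrac{\lambda}{2\tau\dot\tau}(\nabla\ln\rho + 2y)$; rescaling time via $d\eta/dt = \lambda/(2\tau\dot\tau) \sim 1/(8t\ln t)$ then reduces the continuity equation to the Fokker--Planck equation
\begin{equation*}
\partial_\eta\rho = \Delta\rho + 2\,\mathrm{div}(y\rho),
\end{equation*}
whose unique equilibrium of the prescribed mass is $\gamma^2$. Since $\eta(t)\to\infty$, standard Fokker--Planck entropy methods give $\rho(t,\cdot)\to\gamma^2$, forcing $\mu = \gamma^2$. The main obstacle is to justify this overdamped reduction rigorously: although the Bohm term carries $1/\tau^4$, it involves $\nabla(\Delta\sqrt\rho/\sqrt\rho)$, which is not controlled by \eqref{eq:apv}, and the transition from the dispersive to the parabolic regime must be handled through a modulated-energy / relative-entropy argument centered on $\mathcal H(v) := \int|v|^2\ln(|v|^2/\gamma^2)\,dy$, whose quasi-monotone dissipation along the flow, combined with a Csisz\'ar--Kullback inequality, converts control of $\mathcal H$ into the desired $L^1$ convergence $|v|^2\to\gamma^2$.
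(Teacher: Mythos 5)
Your overall architecture (rescale, identify the $v$-equation, hydrodynamic variables, time rescaling $ds\sim dt/(t\ln t)$, Fokker--Planck limit) matches the paper's, and your virial route to the second moment is a legitimate variant of the paper's argument. But there are two genuine gaps.

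First, \eqref{eq:apv} does not follow from conservation of $E(u)$ plus completing the square. Transporting $E(u)$ to the $v$-variables produces
\begin{equation*}
E_0=\mathcal E_{\rm kin}+\frac{(\dot\tau)^2}{2}\int|y|^2|v|^2-\frac{\dot\tau}{\tau}\IM\int v\, y\cdot\nabla_y\bar v+\lambda\int|v|^2\ln|v|^2-\lambda d\ln\tau\int|v|^2+\mathrm{const},
\end{equation*}
and completing the square in the first three terms merely reconstitutes $\tfrac12\|\nabla_x u\|_{L^2}^2\ge0$; what is left is $\lambda\int|v|^2\ln|v|^2\le \lambda d\ln\tau\,\|v\|_{L^2}^2+C$, a bound that grows and gives no uniform control of either the entropy or the $|y|^2$-moment. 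The paper instead works with the pseudo-energy of the $v$-equation itself, $\mathcal E=\frac{1}{2\tau^2}\|\nabla_y v\|_{L^2}^2+\lambda\int|v|^2\ln|v/\gamma|^2$ (note the \emph{relative} entropy, which already contains $\int|y|^2|v|^2$ and carries no $-\ln\tau$ term), proves the monotonicity $\dot{\mathcal E}=-2\frac{\dot\tau}{\tau}\mathcal E_{\rm kin}\le0$, and then absorbs the negative part of $\int|v|^2\ln|v|^2$ by interpolation against $\|yv\|_{L^2}$. This monotonicity also yields the integrated bound $\int_0^\infty\frac{\dot\tau}{\tau^3}\|\nabla_y v\|_{L^2}^2\,dt<\infty$, which is indispensable for the last step. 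The $u$-energy identity is used only afterwards, combined with \eqref{eq:apv} and the exact relation $(\dot\tau)^2=4\lambda\ln\tau$, to extract $\int|y|^2|v|^2-\frac d2\|v\|_{L^2}^2=\mathcal O((\ln t)^{-1/2})$; your virial identity gives an alternative route to \eqref{eq:moments}, but it too requires \eqref{eq:apv} first (to know $\int|v|^2\ln|v|^2=\mathcal O(1)$, hence $\|\nabla u\|_{L^2}^2=2\lambda dM_0\ln\tau+\mathcal O(1)$).

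Second, your proof of \eqref{eq:weaklimitv} is not closed, as you acknowledge: the overdamped/Darcy reduction cannot be justified with the available bounds, and the fallback you propose --- monotone dissipation of the relative entropy plus Csisz\'ar--Kullback --- would yield \emph{strong} $L^1$ convergence, which is precisely what the a priori estimates do not support in general (the paper obtains it only for Gaussian data). The mechanism that actually closes the argument is different: write the system in the unknowns $\rho=|v|^2$, $J=\IM(\bar v\nabla v)$ (no polar factorization, so no Bohm term $\nabla(\Delta\sqrt\rho/\sqrt\rho)$ ever appears; the quantum correction takes the form $\frac{1}{4\tau^2}\Delta\nabla\rho-\frac{1}{\tau^2}\nabla\cdot\RE(\nabla v\otimes\nabla\bar v)$, whose second piece is controlled in $L^1_y$ by $\|\nabla v\|_{L^2}^2$); eliminate $J$ to get a second-order-in-$s$ perturbation of $\partial_s\tilde\rho=L\tilde\rho$; use the time-integrated kinetic bound to show the perturbations converge weakly to zero in $L^1_sW^{-2,1}$ along translates $s+s_n$; extract a weak-$L^1$ limit by Dunford--Pettis and tightness; and finally show the limit is \emph{stationary} in $s$ (because $\frac{\dot\tau}{\tau}\tilde J\in L^2_sL^1_y$), so that it must equal the Gaussian equilibrium $\gamma^2$. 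Without this weak-limit-plus-stationarity device, the passage from the dispersive system to the Fokker--Planck dynamics remains formal.
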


\begin{remark}
The scaling factor in \eqref{eq:uv} is here to normalize the
$L^2$-norm of $v$ to be the same as the $L^2$-norm of $\gamma$, and
is of no influence regarding the statement, in view of
Remark~\ref{rem:scaling}. 
To the best of our knowledge, this is the first time that  a universal
profile is observed  
 for the large time behavior of solutions to a dispersive, Hamiltonian equation. This profile is reached in a weak sense only in Theorem~\ref{theo:main}, as far as the convergence is concerned, but also because  the modulus of the solution only is captured. This   indicates that a lot of information remains encoded in the oscillations of the solution.  See Section~\ref{sec:comments} for more on the large time asymptotics.
 \end{remark}
\begin{remark}\label{rem:wasserstein}
  As a straightforward consequence, we infer the slightly weaker
  property that $|v(t,\cdot)|^2$ converges to $\gamma^2$ in
  Wasserstein distance:
  \begin{equation*}
    W_2\left(\frac{|v(t,\cdot)|^2}{\pi^{d/2}},\frac{\gamma^2}{\pi^{d/2}}\right)\Tend
    t \infty 0,
  \end{equation*}
where we recall that the Wasserstein distance is defined, for $\nu_1$
and $\nu_2$ probability measures, by 
\begin{equation*}
  W_p(\nu_1,\nu_2)=\inf \left\{ \left(\int_{{\mathbb R}^d\times
    {\mathbb R}^d}|x-y|^pd\mu(x,y)\right)^{1/p};\quad (\pi_j)_\sharp \mu=\nu_j\right\},
\end{equation*}
where $\mu$ varies among all probability measures on ${\mathbb R}^d\times
{\mathbb R}^d$, and $\pi_j:{\mathbb R}^d\times {\mathbb R}^d\to {\mathbb R}^d$ denotes the canonical
projection onto the $j$-th factor (see e.g. \cite{Vi03}). 
\end{remark} 
In the context of nonlinear  Hamiltonian partial differential
equations, a general question is the evolution of Sobolev norms, as
emphasized in \cite{Bo96}. In \cite{HPTV15}, the
  existence of unbounded, in $H^s$ with $s>1$, solutions to the cubic  defocusing 
  Schr\"odinger equation was established for the
  first time, in the case where the equation is considered on the
  domain ${\mathbb R}\times {\mathbb T}^d$, $d\ge 2$: for \emph{some} solutions, the
  Sobolev norms grow logarithmically along a sequence of times.
 See also \cite{Iturbulent,GuKa15,GHP16}, in the space periodic
 case. For other equations (cubic 
 Szeg\H{o} equation or  
half-wave equation),
with \emph{specific 
initial data}, a growth rate can be exhibited, possibly along a
sequence of time; see
\cite{GeGr12,GeGr15,Po11}. In particular, for the cubic Szeg\H{o}
equation, the generic (in the sense of Baire) growth of Sobolev norms
with superpolynomial rates is established in \cite{GeGr17}. Combining
the approaches of \cite{HPTV15} and \cite{GeGr17}, it was proved in
\cite{Xu17} that a system of half-wave--Schr\"odinger on the cylinder
${\mathbb R}_x\times {\mathbb T}_y$  possesses unbounded solutions in $L^2_xH^s_y$ for $s>1/2$,
where the growth rate is a superpolynomial function of $\ln t$.  
We show that in the case of
\eqref{eq:logNLS}, the Sobolev norms of \emph{all} solutions grow, regardless
of the data, and we
give a sharp rate.
\begin{corollary}\label{cor:Hs}
   Let $u_0\in \Sigma\setminus\{0\}$, and $0<s\le 1$. The solution
  to \eqref{eq:logNLS} satisfies, as $t\to \infty$,
\begin{equation*}
  \|\nabla u(t)\|_{L^2({\mathbb R}^d)}^2 \Eq t \infty 2\lambda d \|u_0\|_{L^2({\mathbb R}^d)}^2 
\ln  t ,
\end{equation*}
and
  \begin{equation*}
 \left(\ln t\right)^{s/2}\lesssim   \|u(t)\|_{\dot H^s({\mathbb R}^d)}\lesssim \left(\ln t\right)^{s/2},
  \end{equation*}
where $\dot H^s({\mathbb R}^d)$ denotes the standard homogeneous Sobolev space.
\end{corollary}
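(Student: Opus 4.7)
I would treat the sharp $\dot H^1$ asymptotic and the $\dot H^s$ bounds separately, with the lower bound for fractional $s$ relying on the Gagliardo seminorm.

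\emph{Sharp asymptotic for $\|\nabla u\|_{L^2}$.} Starting from the rescaling~\eqref{eq:uv} and setting $\phi(t,x)=\frac{\dot\tau}{\tau}\frac{|x|^2}{2}$ and $C_0=\|u_0\|_{L^2}/\|\gamma\|_{L^2}$, differentiation and the change of variable $y=x/\tau$ give
\begin{equation*}
\|\nabla u(t)\|_{L^2}^2 = C_0^2\left[\frac{\|\nabla_y v\|_{L^2}^2}{\tau^2} + \dot\tau^{2}\int|y|^2|v|^2\,dy + 2\frac{\dot\tau}{\tau}\int y\cdot\IM(\bar v\nabla_y v)\,dy\right].
\end{equation*}
By~\eqref{eq:apv}, the first term is $O(1)$; by Cauchy--Schwarz combined with~\eqref{eq:apv} the cross term is at most $O(\dot\tau)=O(\sqrt{\ln t})$. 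For the middle term, the Gaussian computation $\int|y|^2\gamma^2\,dy=\frac{d}{2}\|\gamma\|_{L^2}^2$, together with~\eqref{eq:moments} and Lemma~\ref{lem:tau}, yields $C_0^2\dot\tau^{2}\int|y|^2|v|^2\,dy\Tend t\infty 2\lambda d\|u_0\|_{L^2}^2\ln t$, which proves the first assertion of the corollary. The upper bound $\|u\|_{\dot H^s}\lesssim(\ln t)^{s/2}$ for $0<s\le1$ is an immediate consequence of the log-convexity $\|u\|_{\dot H^s}\le\|u\|_{L^2}^{1-s}\|u\|_{\dot H^1}^s$ and mass conservation.

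\emph{Lower bound for $0<s<1$.} I would use the Gagliardo representation $\|u\|_{\dot H^s}^2\simeq\iint|u(x)-u(y)|^2|x-y|^{-d-2s}\,dx\,dy$ to convert the quadratic phase oscillations of $u$ into growth of the fractional norm. Setting $w=u e^{-i\phi}$, one has $|w|=|u|$ and, since $\nabla w=(\nabla u-i\frac{\dot\tau}{\tau}x\,u)e^{-i\phi}$ has $L^2$ norm $(C_0/\tau)\|\nabla_y v\|_{L^2}=O(1)$ by~\eqref{eq:apv}, interpolation gives $\|w\|_{\dot H^s}=O(1)$ uniformly in $t$. The decomposition
\begin{equation*}
u(x)-u(y)=w(x)\bigl(e^{i\phi(x)}-e^{i\phi(y)}\bigr)+(w(x)-w(y))e^{i\phi(y)}
\end{equation*}
and the elementary inequality $|A+B|^2\ge\tfrac12|A|^2-|B|^2$ then give, up to a positive constant,
\begin{equation*}
\|u\|_{\dot H^s}^2\gtrsim\iint\frac{|w(x)|^2\cdot 4\sin^2((\phi(x)-\phi(y))/2)}{|x-y|^{d+2s}}\,dx\,dy-\|w\|_{\dot H^s}^2.
\end{equation*}
Using $\phi(x)-\phi(y)=(\dot\tau/\tau)(x\cdot h-|h|^2/2)$ with $h=x-y$, the substitution $h=k\sqrt{\tau/\dot\tau}$ in the $y$-integral extracts a factor $(\dot\tau/\tau)^s$ and reduces the inner integral to $G(\sqrt{\dot\tau/\tau}\,x)$, where $G(z):=\int 4\sin^2(z\cdot k/2-|k|^2/4)|k|^{-d-2s}\,dk$. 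A further change of variable $x=\tau y$ turns the full double integral into $(\dot\tau/\tau)^s C_0^2\int|v(y)|^2 G(\sqrt{\dot\tau\tau}\,y)\,dy$.

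The key technical lemma is the asymptotic $G(z)=C_s|z|^{2s}+O(1)$ as $|z|\to\infty$ with $C_s>0$: using $|\sin^2(\theta+\delta)-\sin^2(\theta)|=|\sin\delta\sin(2\theta+\delta)|\le\min(|\delta|,1)$ with $\delta=-|k|^2/4$, the quadratic phase correction contributes only $O(1)$ uniformly in $z$ (integrability of $|k|^{2-d-2s}$ near $0$ here requires $s<1$, the crucial hypothesis), while the principal term $\int 4|k|^{-d-2s}\sin^2(z\cdot k/2)\,dk$ scales as $C_s|z|^{2s}$ by homogeneity. Since $\sqrt{\dot\tau\tau}\to\infty$, combining this with the uniform lower bound $\int_{|y|\ge\varepsilon}|y|^{2s}|v(t,y)|^2\,dy\ge c>0$ for $t$ large (obtained by testing~\eqref{eq:weaklimitv} against a smooth cutoff of $|y|^{2s}\mathbf{1}_{|y|\ge\varepsilon}$, with tightness supplied by~\eqref{eq:moments}) yields $\|u\|_{\dot H^s}^2\gtrsim\dot\tau^{2s}\sim(\ln t)^s$, completing the proof. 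The main obstacle I expect is the quantitative asymptotic of $G$: the splitting at the scale $|k|\sim 1/|z|$ must be carried out carefully to separate the $|z|^{2s}$ contribution from the $O(1)$ error due to the quadratic phase correction $|k|^2/4$.
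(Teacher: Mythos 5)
Your proposal is correct, and both halves take routes that genuinely differ from the paper's. For the sharp $\dot H^1$ asymptotics the paper never expands $\nabla u$ through \eqref{eq:uv}: it rewrites the potential energy $\int|u|^2\ln|u|^2$ in terms of $v$, finds it equals $-d\|u_0\|_{L^2}^2\ln\tau(t)+{\mathcal O}(1)$ by \eqref{eq:apv}, and reads off $\|\nabla u(t)\|_{L^2}^2$ from conservation of $E(u)$. Your direct expansion, combined with \eqref{eq:moments}, the identity $\int|y|^2\gamma^2=\frac d2\|\gamma\|_{L^2}^2$ and $\dot\tau^2=4\lambda\ln\tau\sim4\lambda\ln t$, yields the same constant; the two arguments are essentially dual (the paper proves \eqref{eq:moments} from the energy conservation by exactly the bookkeeping you perform in reverse), and your cross-term estimate ${\mathcal O}(\dot\tau)$ is the same one the paper uses inside Lemma~\ref{lem:quad-v}. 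The interpolation upper bound is identical. For the lower bound with $0<s<1$ the paper is shorter but less self-contained: it invokes the Wasserstein convergence of Remark~\ref{rem:wasserstein} to obtain \eqref{eq:moment-int}, then applies Lemma~\ref{lem:ACMA} from \cite{ACMA} with $w=\frac{\dot\tau}{\tau}x$, which bounds $(\dot\tau)^s\||y|^s v\|_{L^2}$ by $\|u\|_{\dot H^s}$ plus terms that are bounded thanks to \eqref{eq:apv}. Your Gagliardo-seminorm computation --- the splitting $u=we^{i\phi}$, the rescaled kernel $G$ and the asymptotic $G(z)=C_s|z|^{2s}+{\mathcal O}(1)$ --- amounts to a self-contained proof of the relevant instance of that lemma; the restriction $s<1$ enters at the analogous place (integrability of $|k|^{2-d-2s}$ near the origin), the leftover $\|w\|_{\dot H^s}^2={\mathcal O}(1)$ plays the role of the bounded terms in Lemma~\ref{lem:ACMA}, and testing \eqref{eq:weaklimitv} against a bounded cutoff below $|y|^{2s}$ replaces the appeal to $W_2$-convergence (a positive lower bound on $\int|y|^{2s}|v|^2$ indeed suffices). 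What your route buys is independence from the external lemma, at the cost of the careful splitting of $G$ at scale $|k|\sim|z|^{-1}$ that you correctly flag as the delicate step.
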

 The weak convergence in the last point of Theorem~\ref{theo:main}
 may seem puzzling. In the case of Gaussian
 initial data, we can prove that the convergence is strong. 
\begin{corollary}[Strong convergence in the Gaussian
  case]\label{cor:entropie-gauss} 
  Suppose that the initial data~$u_0$ is a Gaussian,
\begin{equation*}
 u_0(x) =  b_0 
  \exp \Big( -\frac{1}{2}\sum_{j=1}^d a_{0j} (x_j-x_{0j})^2\Big)
\end{equation*}
with $b_0,a_{0j}\in {\mathbb C}$, $\RE a_{0j}>0$, and $x_{0j}\in
{\mathbb R}$. Then, with  $v$ 
  given by \eqref{eq:uv}, the relative
      entropy of $|v|^2$  goes to zero for large time:
      \begin{equation*}
        \int_{{\mathbb R}^d}|v(t,y)|^2\ln\left|\frac{v(t,y)}{\gamma(y)}\right|^2dy\Tend
        t \infty 0 \, ,
      \end{equation*}
and the convergence of~$|v|^2$ to~$\gamma^2$ is strong in $L^1$:
\begin{equation*}
  \left\||v(t,\cdot)|^2-\gamma^2\right\|_{L^1({\mathbb R}^d)}\Tend t \infty 0 \, .
\end{equation*}
\end{corollary}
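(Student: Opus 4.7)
The plan is to exploit the fact that the logarithmic nonlinearity preserves the class of Gaussian functions, reducing the corollary to a finite-dimensional asymptotic computation. I would first substitute the ansatz
\begin{equation*}
u(t,x) = b(t) \exp\Bigl(-\tfrac{1}{2}\sum_{j=1}^d a_j(t)(x_j - x_{cj}(t))^2\Bigr)
\end{equation*}
into \eqref{eq:logNLS}, with $b, a_j\in{\mathbb C}$ and $x_{cj}\in{\mathbb R}$. Since $\ln|u|^2$ is polynomial of degree two in $x$, matching powers of $x_j$ yields a closed ODE system for finitely many parameters, notably an equation of the form $i\dot a_j = a_j^2 + 2\lambda\RE a_j$ for the inverse widths. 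By the uniqueness part of Theorem~\ref{positivecauchy}, the solution emanating from a Gaussian initial datum is therefore Gaussian for all time. Thanks to the tensor-product structure ($\ln|\prod_j u_j|^2 = \sum_j \ln|u_j|^2$), one may as well work in dimension one.

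Next, I would use \eqref{eq:uv} to transfer the Gaussian form to $v(t,y)$: its modulus is itself a Gaussian,
\begin{equation*}
|v(t,y)|^2 = C(t)\prod_{j=1}^d \exp\bigl(-\alpha_j(t)\,(y_j-y_{cj}(t))^2\bigr),
\end{equation*}
where $\alpha_j(t)>0$ and $y_{cj}(t)\in{\mathbb R}$ can be expressed through $\RE a_j(t)$, $\tau(t)$ and $\dot\tau(t)$, while $C(t)$ is fixed by the normalization $\|v(t)\|_{L^2}=\|\gamma\|_{L^2}$ built into \eqref{eq:uv}. The moment identities \eqref{eq:moments} from Theorem~\ref{theo:main} then pin down these parameters: since the first and second moments uniquely determine a Gaussian of prescribed $L^2$-norm, one gets $y_{cj}(t)\to 0$ and $\alpha_j(t)\to 1$ as $t\to\infty$.

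It remains to translate the parameter convergence into the two claims. For a Gaussian of the above form the relative entropy with respect to $\gamma^2$ reduces, by direct integration, to an elementary function of the $(\alpha_j,y_{cj})$ that vanishes continuously at $(1,0)$, which gives the first assertion. For the $L^1$ convergence I would apply the Csisz\'ar--Kullback--Pinsker inequality to the probability densities $|v|^2/\|\gamma\|_{L^2}^2$ and $\gamma^2/\|\gamma\|_{L^2}^2$, obtaining
\begin{equation*}
\bigl\||v(t,\cdot)|^2-\gamma^2\bigr\|_{L^1}^2 \lesssim \int_{{\mathbb R}^d}|v(t,y)|^2 \ln\frac{|v(t,y)|^2}{\gamma(y)^2}\,dy \xrightarrow[t\to\infty]{}0.
\end{equation*}
No serious obstacle is anticipated, since everything reduces to explicit formulas for Gaussians combined with the moment information already proved. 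The only point requiring care is the consistency of the Gaussian ansatz with \eqref{eq:logNLS} when $a_{0j}$ is complex, which is handled by analyzing the ODE system and invoking the uniqueness from Theorem~\ref{positivecauchy}; alternatively, one may observe that the asymptotic $\alpha_j(t)\to 1$ is compatible with the fact, visible on the ODE for $r_j:=(\RE a_j)^{-1/2}$, that $r_j$ satisfies $\ddot r_j = r_j^{-3} + 2\lambda/r_j$, which is a perturbation of the defining equation \eqref{eq:tau} for $\tau$.
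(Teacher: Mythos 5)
Your proposal is correct, and its overall skeleton coincides with the paper's: propagate the Gaussian ansatz through the ODE system (tensorizing to $d=1$), write $|v(t,\cdot)|^2$ as an explicit Gaussian, show the relative entropy vanishes, and conclude the $L^1$ convergence by Csisz\'ar--Kullback, exactly as in \eqref{eq:CK1}. The one genuine difference is how you identify the limiting parameters. The paper substitutes the sharp asymptotics of $r_j$ versus $\tau$ from Section~\ref{sec:gaussian} (namely $r_j(t)\sim\sqrt{\alpha_{0j}}\,\tau(t)$, so that $\alpha_{0j}\tau^2/r_j^2\to 1$ and $\tau/r_j^2\to 0$) directly into the explicit entropy formula and checks term by term that it tends to zero, using only the \emph{boundedness} of the first and second moments. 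You instead invoke the \emph{convergence} of the moments \eqref{eq:moments} from the general Theorem~\ref{theo:main} to pin down the Gaussian parameters ($y_{cj}\to 0$, $\alpha_j\to 1$), and then conclude by continuity of the entropy as an elementary function of $(\alpha_j,y_{cj})$ on normalized Gaussians. Both are legitimate; your route is slightly more economical in that it does not require re-deriving the refined comparison $r_j\sim\sqrt{\alpha_{0j}}\,\tau$, at the price of leaning on the general (and harder) moment lemma, whereas the paper's computation stays entirely within the explicit Gaussian calculus of Section~\ref{sec:gaussian} and in addition exhibits the logarithmic \emph{rate} at which the entropy decays, which your continuity argument does not provide.
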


\subsection{Comments and further result}
\label{sec:comments}

In the linear case
\begin{equation}
  \label{eq:linear}
  i{\partial}_t u_{\rm free}+\frac{1}{2}\Delta u_{\rm free} =0 \, , \quad
  u_{{\rm free}\mid t=0}=u_0 \, , 
\end{equation}
the integral representation
\begin{equation*}
  u_{\rm free}(t,x) = \frac{1}{(2i\pi t)^{d/2}}\int_{{\mathbb R}^d}e^{i\frac{|x-y|^2}{2t}}u_0(y)dy
\end{equation*}
readily yields the well-known dispersive estimate
\begin{equation*}
  \|u_{\rm free}(t)\|_{L^\infty({\mathbb R}^d)}\lesssim \frac{1}{|t|^{d/2}}\|u_0\|_{L^1({\mathbb R}^d)} \, .
\end{equation*}
Moreover, defining the Fourier transform as
\begin{equation*}
  {\mathcal F}
f(\xi):=\hat f(\xi):=\frac{1}{(2\pi)^{d/2}}\int_{{\mathbb R}^d}f(x)e^{-ix\cdot \xi} dx \, , 
\end{equation*}
we have the standard asymptotics (see e.g. \cite{Rauch91}),
\begin{equation}\label{eq:asymfree}
   \left\lVert u_{\rm free}(t) - A(t)u_0\right\rVert_{L^2({\mathbb R}^d)} \Tend t
    {\pm\infty} 0, \quad A(t)u_0(x) := \frac{1}{(it)^{d/2}}\hat
  u_0\left(\frac{x}{t}\right)e^{i\frac{\lvert x\rvert^2}{2t}} \, ,
\end{equation}
a formula which has proven very useful in the nonlinear (long range)
scattering theory (see e.g. \cite{Ginibre,HaNa06}).

 In the case of the
defocusing nonlinear Schr\"odinger equation with power-like
nonlinearity, 
\begin{equation}\label{eq:NLS}
  i{\partial}_t u+\frac{1}{2}\Delta u = |u|^{2\sigma}u \, ,\quad u_{\mid t=0}=u_0 \, ,
\end{equation}
if $\sigma$ is sufficiently large (say $\sigma>2/d$ if $u_0\in H^1({\mathbb R}^d)$,
even though this bound can be lowered if in addition $\hat u_0\in H^1({\mathbb R}^d)$),
then there exists $u_+\in H^1({\mathbb R}^d)$ such that in $L^2({\mathbb R}^d)$,
\begin{equation*}
  u(t,x)\Eq t\infty e^{i\frac{t}{2}\Delta}u_+(x)\Eq t \infty \frac{1}{(it)^{d/2}}\hat
  u_+\left(\frac{x}{t}\right)e^{i\frac{\lvert x\rvert^2}{2t}} \, ,
\end{equation*}
where the last relation stems from \eqref{eq:asymfree}.
Therefore,
Theorem~\ref{theo:main} shows that unlike in the free case
\eqref{eq:linear} or in the above nonlinear case~\eqref{eq:NLS}, the dispersion is
modified (it is even enhanced the larger the $\lambda$), and the
asymptotic profile $\hat u_+$ (with $u_+=u_0$ in the 
free case), which depends on the initial profile, is replaced by a
universal one (up to a normalizing factor),
\begin{equation*}
  \frac{\|u_0\|_{L^2}}{\pi^{d/4}} e^{-|x|^2/2} \, .
\end{equation*}
As already pointed out, the nonlinearity is responsible for the new
dispersive rate, as it introduces a logarithmic factor. In particular,
no scattering result relating the dynamics of \eqref{eq:logNLS} to the
free dynamics $e^{i\frac{t}{2}\Delta}$ must be expected. This
situation can be compared with the more familiar one with low power
nonlinearity, where a long range scattering theory is (sometimes)
available. If $\sigma\le 1/d$ in \eqref{eq:NLS}, then $u$ cannot be be
compared with a free evolution for large time, in the sense that if
for some $u_+\in L^2({\mathbb R}^d)$,
\begin{equation*}
  \|u(t)-e^{i\frac{t}{2}\Delta}u_+\|_{L^2({\mathbb R}^d)}\Tend t \infty 0 \, ,
\end{equation*}
then $u=u_+=0$ (\cite{Barab}). In the case $\sigma=1/d$, $d=1,2,3$,  a
nonlinear phase 
modification of $e^{i\frac{t}{2}\Delta}$ must be incorporated in order
to describe the asymptotic behavior of $u$
(\cite{Ozawa91,HaNa06}). The same is true when \eqref{eq:NLS} is
replaced with the Hartree equation \cite{GV01,Na02a,Na02b}. In all these
cases, as well as for some
quadratic nonlinearities in dimension~3 (\cite{HMN03}), the dispersive rate of
the solution remains the same as in the free case, of order
$t^{-d/2}$. Note however  that a similar logarithmic perturbation of
the dispersive rate was observed in \cite{HaNa15}, for the equation
\begin{equation}\label{eq:HaNa15}
  i{\partial}_tu+\frac{1}{2}{\partial}_x^2 u =i\lambda u^3 +|u|^2u \, ,\quad x\in {\mathbb R} \, ,
\end{equation}
with $\lambda\in {\mathbb R}$, $0<|\lambda|<\sqrt 3$. More precisely, the
authors construct   small
solutions satisfying the bounds
\begin{equation*}
  \frac{1}{\sqrt t (\ln t)^{1/4}}\lesssim \sup_{|x|\le \sqrt
    t}|u(t,x)|\lesssim  \frac{1}{\sqrt t (\ln t)^{1/4}} \, ,\quad \text{as
  }t\to \infty \, . 
\end{equation*}
An important difference with \eqref{eq:logNLS} though is that the
$L^2$-norm of the solution of~\eqref{eq:HaNa15} is not preserved by
the flow, and that \eqref{eq:HaNa15} has no Hamiltonian structure. 
\smallbreak

On the other hand, \eqref{eq:uv} brings out a universal spatial
oscillation, namely the term $\exp\left(i\frac{\dot
  \tau(t)}{\tau(t)}\frac{|x|^2}{2}\right)$. In view of Lemma~\ref{lem:tau},
we have 
\begin{equation*}
  \frac{\dot
  \tau(t)}{\tau(t)}
-\frac{1}{t}=\frac{{\mathcal O}\left(\ell(t)\right)}{t\left(1+{\mathcal O}\left(\ell(t)\right)\right)},\quad
\text{hence }\exp\left(i\frac{\dot
  \tau(t)}{\tau(t)}\frac{|x|^2}{2}\right)\Eq t \infty e^{i\frac{|x|^2}{2t}},
\end{equation*}
the same universal oscillation as for the linear Schr\"odinger
equation \eqref{eq:linear}, see \eqref{eq:asymfree}.
\smallbreak

Finally let us remark that the convergence to a universal profile is
reminiscent of what happens for the linear heat equation on ${\mathbb R}^d$. 
Indeed, let $u$ solve
\begin{equation}
  \label{eq:heat}
  {\partial}_t u = \frac{1}{2}\Delta u,\quad (t,x)\in {\mathbb R}_+\times {\mathbb R}^d,\quad
  u_{\mid t=0}=u_0.
\end{equation}
This equation can be solved thanks to Fourier analysis,
$  \hat u(t,\xi)= \hat u_0(\xi)e^{-\frac{t}{2}|\xi|^2}$. This allows
to show the following standard asymptotics (see e.g. \cite{Rauch91}): 
\begin{equation*}
  u(t,x)\Eq t\infty \frac{m}{(2\pi t)^{d/2}}  e^{-|x|^2/(2t)}\quad
  \text{in }L^2\cap L^\infty({\mathbb R}^d),\text{ provided }
  m:=\int_{{\mathbb R}^d}u_0(x)dx\not = 0.
\end{equation*}
We note that at
leading order, the only role played by the initial data is the presence
of the total mass $m$. The asymptotic profile is \emph{universal}, and
corresponds to the Gaussian $\gamma$. 
\smallbreak

In a nonlinear setting, our result is reminiscent of the
works~\cite{GaWa02,GaWa05,GaWa05bis} on the 
Navier-Stokes equations: there it is proved that up to a rescaling
which corresponds to the natural scaling of the equations, the
vorticity converges strongly to a Gaussian which is known as the Oseen
vortex. The main argument, as in the present case, is the reduction to
a Fokker-Planck equation. 
\smallbreak

In view of the above discussion, one may ask if the phenomena stated
in Theorem~\ref{theo:main} and Corollary~\ref{cor:Hs} are bound to the
very special structure of the nonlinearity. Our final result shows
that it is not the case, inasmuch as these results remain (possibly up
to a uniqueness issue) when an energy-subcritical defocusing
power-like nonlinearity is added,
\begin{equation}
  \label{eq:power}
  i{\partial}_t u + \frac{1}{2}\Delta u = \lambda u \ln(|u|^2)+
  \mu|u|^{2\sigma}u,\quad u_{\mid t=0}=u_0.
\end{equation}
The mass and angular momentum of $u$ are the same as before, and they
are formally conserved, as well as the energy
\begin{equation*}
  E(u(t))= \frac{1}{2}\|\nabla
  u(t)\|_{L^2({\mathbb R}^d)}^2+\lambda\int_{{\mathbb R}^d} |u(t,x)|^2\ln |u(t,x)|^2dx
  +\frac{\mu}{\sigma+1}\int_{{\mathbb R}^d}|u(t,x)|^{2\sigma+2}dx. 
\end{equation*}
\begin{theorem}\label{theo:power}
  Let $\lambda,\mu>0$ and $0<\sigma<2/(d-2)_+$. For $u_0\in \Sigma$,
  \eqref{eq:power} has a solution $u\in L^\infty_{\rm
    loc}({\mathbb R};\Sigma)$. It is unique (at least) if $d=1$. Its mass,
  angular momentum and energy are independent of time. Setting
\begin{equation*}
 u(t,x)
  =\frac{1}{\tau(t)^{d/2}}v\left(t,\frac{x}{\tau(t)}\right)
\frac{\|u_0\|_{L^2({\mathbb R}^d)}}{\|\gamma\|_{L^2({\mathbb R}^d)}} 
\exp \Big({i\frac{\dot\tau(t)}{\tau(t)}\frac{|x|^2}{2}} \Big) \, ,
\end{equation*}
we have
\begin{equation*}
   \int_{{\mathbb R}^d}
  \begin{pmatrix}
    1\\
y\\
|y|^2
  \end{pmatrix}
|v(t,y)|^2dy\Tend t \infty 
 \int_{{\mathbb R}^d}
  \begin{pmatrix}
    1\\
y\\
|y|^2
  \end{pmatrix}
\gamma^2(y)dy \, ,
\end{equation*}
and
\begin{equation*}
  |v(t,\cdot)|^2 \mathop{\rightharpoonup}\limits_{t\to \infty}
  \gamma^2 
\quad  \text{weakly in }L^1({\mathbb R}^d) \, . 
\end{equation*}
Finally, for 
$0<s\le 1$, $u$ satisfies, as $t\to \infty$,
\begin{equation*}
  \|\nabla u(t)\|_{L^2({\mathbb R}^d)}^2 \Eq t \infty 2\lambda d \|u_0\|_{L^2({\mathbb R}^d)}^2 
\ln  t ,
\end{equation*}
and
  \begin{equation*}
 \left(\ln t\right)^{s/2}\lesssim   \|u(t)\|_{\dot H^s({\mathbb R}^d)}\lesssim \left(\ln
 t\right)^{s/2}. 
  \end{equation*}
\end{theorem}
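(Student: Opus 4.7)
The proof follows the strategy developed for Theorem~\ref{theo:main} and Corollary~\ref{cor:Hs}, treating the defocusing power-like term $\mu|u|^{2\sigma}u$ as an asymptotically negligible perturbation. The key observation is that once $u$ is expressed through the self-similar rescaling \eqref{eq:uv}, we have $|u(t,x)|^{2\sigma}$ proportional to $\tau(t)^{-d\sigma}|v(t,y)|^{2\sigma}$, so that the power contribution to every equation carries an extra factor $\tau(t)^{-d\sigma}$, which tends to zero.

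Global existence in $\Sigma$ with conservation of $M$, $J$ and $E$ follows from the scheme used in Theorem~\ref{positivecauchy}: because $\mu>0$ and $\sigma<2/(d-2)_+$, the power nonlinearity is defocusing and energy-subcritical, so it is handled by standard Strichartz arguments and adds only a nonnegative contribution to the energy. In dimension $d=1$, uniqueness follows from $H^1\hookrightarrow L^\infty$, which makes the power term locally Lipschitz on bounded sets of $H^1$, combined with the Osgood-type modulus of continuity already used for the logarithmic term in \cite{CaHa80}.

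The heart of the proof is the analysis of the rescaled profile. Inserting the ansatz \eqref{eq:uv} into \eqref{eq:power} and repeating the a priori analysis leading to \eqref{eq:apv}, the energy identity picks up an extra nonnegative term proportional to $\mu\tau(t)^{-d\sigma}\|v(t)\|_{L^{2\sigma+2}}^{2\sigma+2}/(\sigma+1)$, whose boundedness in time follows from the conservation of $E(u)$ together with Gagliardo--Nirenberg. Passing to hydrodynamical variables $\rho=|v|^2$, $J=\IM(\bar v\nabla v)$, one obtains the same variant of the isothermal compressible Euler system as in Theorem~\ref{theo:main}, modulo an additional pressure contribution of the form $\mu\tau^{-d\sigma}\rho^{\sigma+1}$; after the logarithmic change of time variable, this extra term acts only as a vanishing forcing on the limiting Fokker--Planck equation, whose unique centered Gaussian steady state with the prescribed mass is $\gamma^2$. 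The compactness arguments used for Theorem~\ref{theo:main} then deliver the weak convergence of $|v|^2$ to $\gamma^2$ and the convergence of the first three moments.

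The Sobolev asymptotics follow from energy conservation: since
\begin{equation*}
\frac{\mu}{\sigma+1}\int_{{\mathbb R}^d}|u(t,x)|^{2\sigma+2}dx=O\bigl(\tau(t)^{-d\sigma}\bigr)=o(\ln t),
\end{equation*}
the leading-order balance between kinetic energy and logarithmic entropy is the same as in Corollary~\ref{cor:Hs}, giving $\|\nabla u(t)\|_{L^2}^2\sim 2\lambda d\|u_0\|_{L^2}^2\ln t$; the fractional bounds then follow by Gagliardo--Nirenberg interpolation between $L^2$ (conserved) and $\dot H^1$, combined with the enhanced dispersion rate $\tau(t)$. The main technical obstacle is to keep $\|v(t)\|_{L^{2\sigma+2}}$ controlled uniformly in time throughout the hydrodynamical reduction, so as to justify that the power term is a genuine lower-order perturbation at every step; this is achieved through the analogue of \eqref{eq:apv}, which in particular yields $\|\nabla_y v(t)\|_{L^2}=O(\tau(t))$, exactly as in the purely logarithmic case.
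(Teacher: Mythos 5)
Your proposal is correct and follows essentially the same route as the paper: the power term contributes an extra factor $\tau(t)^{-d\sigma}$ after the rescaling \eqref{eq:uv}, the pseudo-energy acquires a nonnegative term $\frac{\tilde\mu}{(\sigma+1)\tau^{d\sigma}}\int|v|^{2\sigma+2}$ whose decay yields the needed time-integrability, the hydrodynamical system picks up a pressure term $\propto\tau^{-d\sigma}\nabla(\rho^{\sigma+1})$ that vanishes in the large-time weak limit, and the Sobolev asymptotics follow as in Corollary~\ref{cor:Hs} since the power potential energy is $o(\ln t)$. One minor imprecision: the paper only establishes boundedness of $\tau(t)^{-d\sigma}\|v(t)\|_{L^{2\sigma+2}}^{2\sigma+2}$ (from the monotonicity of the pseudo-energy together with the interpolation argument of Lemma~\ref{lem:apv}), not uniform control of $\|v(t)\|_{L^{2\sigma+2}}$ itself as you state at the end, but this weaker bound is all that is needed at every step.
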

The above result shows that the logarithmic nonlinearity, far from being a weak
nonlinearity, dictates the dynamics even in the presence of an
energy-subcritical defocusing 
power-like nonlinearity, in sharp contrast with the scattering results
of the case $\lambda=0$ mentioned above. The dynamics due to the logarithmic
nonlinearity is thus rather stable.

\subsection{Outline of the paper}

In Section~\ref{sec:cauchy}, we give the proof of
Theorem~\ref{positivecauchy}. Section~\ref{sec:gaussian} contains
explicit computations in the Gaussian case, paving
  the way for the general case. The main step consists in a reduction to ordinary differential equations, and  the computations concerning the main ODEs
 are gathered in this section, including the
proof of Lemma~\ref{lem:tau}. The first part of
Theorem~\ref{theo:main}, that is everything  except~\eqref{eq:weaklimitv}, is
proved in Section~\ref{sec:preparation} and relies on energy estimates with appropriate weights in space. The weak limit
\eqref{eq:weaklimitv} is proved in Section~\ref{sec:end}.  
The main idea consists in using a Madelung transform, which leads to the study of a hyperbolic system which is a variant of the isothermal, compressible Euler equation. A rescaling in the time variable reduces the study to a non autonomous perturbation of the Fokker-Planck equation, and the a priori estimates obtained in  Section~\ref{sec:preparation} imply that a weak limit of the solution satisfies the Fokker-Planck equation for large times. Since it is known that the large time behavior of the solution to the  Fokker-Planck equation is the centered Gaussian, a tightness argument on the rescaled solution concludes the proof.
The proofs of the corollaries are given in 
Section~\ref{sec:entropie}, and the main arguments leading to
Theorem~\ref{theo:power}  are given in Section~\ref{sec:power}.

\subsection*{Acknowledgements} The authors wish to thank Kleber
Carrapatoso, Laurent Desvillettes, Erwan Faou, Matthieu Hillairet and
C\'edric Villani for enlightening discussions, and the referees for 
their careful reading of the paper and their numerous constructive comments.

\section{Cauchy problem: proof of 
Theorem~\ref{positivecauchy}} 
\label{sec:cauchy}
In this section we   sketch the proof of the existence of a unique
weak solution, which follows very standard ideas (see~\cite{CazCourant,CaHa80}).  We first prove, in Section~\ref{sec:existence}, the existence of weak solutions by solving an approximate system and passing to the limit in the approximation parameter. This produces a weak solution, whose uniqueness is proved in Section~\ref{sec:uniqueness}. The proof of the  propagation of higher regularity is given in Section~\ref{sec:higher}.

\subsection{Existence}\label{sec:existence}
To prove the existence of a weak solution we proceed by approximating the equation as follows: consider for all~$\varepsilon \in (0,1)$ the equation
\begin{equation}
  \label{eq:logNLSeps}
  i{\partial}_t u_\varepsilon +\frac{1}{2} \Delta u_\varepsilon =\lambda
  \ln\left(\varepsilon+|u_\varepsilon|^2\right)u_\varepsilon\, ,\quad u_{\varepsilon\mid t=0} =u_{0} \, . 
\end{equation}
 Equation~\eqref{eq:logNLSeps} is easily solved in~$C({\mathbb R};L^2({\mathbb R}^d))$ since it is subcritical in~$L^2$ (see~\cite{CazCourant}). It remains therefore to prove uniform bounds for~$u_\varepsilon(t)$ in~$ {\mathcal F}(H^\alpha)\cap H^1({\mathbb R}^d)$, which will provide compactness in space for the sequence~$u_\varepsilon$. Since time compactness (in~$H^{-2}({\mathbb R}^d)$) is a direct consequence of the equation, the Ascoli theorem will then give the result.
Actually once a bound in~$  L^\infty_{\rm loc}({\mathbb R};H^1({\mathbb R}^d))$  is
derived, then the~$L^\infty_{\rm loc}({\mathbb R};{\mathcal F}(H^\alpha) )$ bound can be
obtained directly thanks to the following computation: define 
\[
I_{\varepsilon,\alpha} (t):= \int_{{\mathbb R}^d} \langle x \rangle^{2\alpha} |u_{\varepsilon}|^2(t,x) \, dx \, .
\]
Then multiplying the equation by~$ \langle  x \rangle^{2\alpha}
\overline u_{\varepsilon}$ and integrating in space provides   
\[
\begin{aligned}
\frac d{dt} I_{\varepsilon,\alpha} (t) &= 2\alpha\IM \int  \frac{x
  \cdot \nabla  u_{\varepsilon}  }{  \langle x \rangle^{2-2\alpha}} \,
\overline u_{\varepsilon}(t)  \, dx \\ 
& \le 2\alpha\|\left\langle x\right\rangle^{2\alpha-1}u_\varepsilon(t)
\|_{L^2({\mathbb R}^d)} \|\nabla 
u_\varepsilon(t)  \|_{L^2({\mathbb R}^d)} \\
&\le 2\alpha\|\left\langle x\right\rangle^{\alpha}u_\varepsilon(t)  \|_{L^2({\mathbb R}^d)} \|\nabla
u_\varepsilon(t)  \|_{L^2({\mathbb R}^d)}
\, ,
\end{aligned}
\]
where the last estimate stems from the property $\alpha\le 1$. Therefore,
\[
\|u_{\varepsilon}(t)\|_{{\mathcal F}(H^\alpha)}^2 \le \|u_{0}\|_{{\mathcal F}(H^\alpha)}^2 + 2\alpha
\int_0^t \|u_{\varepsilon}(t')\|_{{\mathcal F}(H^\alpha)}   \|\nabla u_\varepsilon(t')\|_{L^2({\mathbb R}^d)} \, dt' \, .
\]
So it remains to compute the~$H^1({\mathbb R}^d)$ norm of~$u_{\varepsilon}(t)$. This
is quite easy since the problem becomes linear in~$\nabla
u_\varepsilon$. Indeed for any~$1 \le j \le d$ one has 
\begin{equation}\label{eq:djueps}
  i{\partial}_t {\partial}_j u_\varepsilon +\frac{1}{2} \Delta {\partial}_ju_\varepsilon =\lambda \ln\left(\varepsilon+|u_\varepsilon|^2\right){\partial}_ju_\varepsilon + 2\lambda \frac{1}{\varepsilon+|u_\varepsilon|^2}  \RE ( {\bar u}_\varepsilon
{\partial}_ju_\varepsilon) u_\varepsilon,
\end{equation}
and we note that~$\displaystyle \Big | \frac{1}{\varepsilon+|u_\varepsilon|^2} 2 \RE ( {\bar u}_\varepsilon
{\partial}_ju_\varepsilon) u_\varepsilon\Big| \le 2 |{\partial}_ju_\varepsilon|  $. We therefore conclude
that~${u_\varepsilon}$ belongs to~$L^\infty_{\rm loc}({\mathbb R};H^1({\mathbb R}^d))$,
uniformly in $\varepsilon$.

\smallbreak

Passing to the limit to obtain a solution conserving mass, angular momentum, and energy is established
in the same way as 
in \cite{CaHa80} (see also~\cite{CazCourant}).  The
existence part of 
Theorem~\ref{positivecauchy} follows.   

\medskip

 For the proof of uniqueness below it is   useful to prove that any
 solution in the class $L^\infty_{\rm loc}(\mathbb R;\mathcal F(H^\alpha)\cap H^1)$
 belongs actually to~$C({\mathbb
   R};(L^2 \cap H^1_{\rm weak})({\mathbb R}^d))$, which allows to make
 sense of the initial data in~$H^1 ({\mathbb R}^d)$. The method of
 proof follows the idea of~\cite{CaHa80}, and is in fact easier due to
 our functional setting. Let~$u$ be such a solution, then
 clearly~$\Delta u$ belongs to~$L^\infty_{\rm loc}({\mathbb R};H^{-1}
 ({\mathbb R}^d))$ and we claim that~$ u \ln |u|^2 $ belongs
 to~$L^\infty_{\rm loc}({\mathbb R};L^2  ({\mathbb R}^d))$. Indeed
 there holds 
 \[
 \int |u|^2 (\ln |u|^2)^2  \lesssim  \int |u|^{2-\epsilon}  
+  \int |u|^{2+\epsilon}   
\]
 for all~$\epsilon>0$, and moreover we have the estimate
 \begin{equation}\label{interpolationinequality}
  \int_{{\mathbb R}^d} |u|^{2-\epsilon} \lesssim \|u\|_{L^2}^{2- \epsilon - \frac{d\epsilon}{2\alpha}}
  \|x^\alpha u\|_{L^2}^{ \frac{d\epsilon}{2\alpha}} \, ,
\end{equation}
for~$0<\epsilon<\frac{4\alpha}{d+2\alpha}$, which can be
readily proved by an interpolation method (cutting the integral
into~$|y| < R$ and~$|y| > R$, using H\"older inequality and optimizing
over~$R$; see e.g. \cite{CM}). 
This, along with Sobolev embeddings, implies that
\[
 \int |u|^2 (\ln |u|^2)^2  \lesssim   \|u\|_{L^2}^{2- \epsilon - \frac{d\epsilon}{2\alpha}}
  \|x^\alpha u\|_{L^2}^{ \frac{d\epsilon}{2\alpha}} +  \|u\|_{H^ 1}^{2+\epsilon} 
\]
so finally~$\partial_t u$ belongs to~$L^\infty_{\rm loc}({\mathbb R};H^{-1}  ({\mathbb R}^d))$ and the result follows.
\subsection{Uniqueness}\label{sec:uniqueness}
The uniqueness of the solution  constructed above is a consequence of the following lemma.
\begin{lemma}[Lemma 9.3.5 from \cite{CazCourant}]\label{lemunique}
  We have 
  \begin{equation*}
    \left| \IM\left(\left(z_2\ln|z_2|^2
      -z_1\ln|z_1|^2\right)\left(\bar z_2-\bar z_1\right)\right)\right|\le
    4|z_2-z_1|^2 \, ,\quad \forall 
    z_1,z_2\in {\mathbb C} \, .
  \end{equation*}
\end{lemma}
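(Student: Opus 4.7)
The plan is to reduce the inequality, via an algebraic simplification of the imaginary part, to an elementary one-variable bound on the logarithm. First, I would expand
\[
(z_2\ln|z_2|^2-z_1\ln|z_1|^2)(\bar z_2-\bar z_1)
\]
into four terms. The two \emph{diagonal} terms $|z_2|^2\ln|z_2|^2$ and $|z_1|^2\ln|z_1|^2$ are real, so they contribute nothing to the imaginary part. The two cross terms combine, using $\IM(z_1\bar z_2)=-\IM(z_2\bar z_1)$, to give
\[
\IM\bigl((z_2\ln|z_2|^2-z_1\ln|z_1|^2)(\bar z_2-\bar z_1)\bigr)=\IM(z_2\bar z_1)\bigl(\ln|z_1|^2-\ln|z_2|^2\bigr).
\]

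Second, I would exploit the symmetry of both sides of the stated inequality under $z_1\leftrightarrow z_2$ to assume $|z_1|\le|z_2|$, and dispose of the case $z_1=0$ (trivial, with the convention $0\ln 0=0$). I would then bound the two factors separately. Writing $z_2\bar z_1=|z_1|^2+(z_2-z_1)\bar z_1$ shows $|\IM(z_2\bar z_1)|=|\IM((z_2-z_1)\bar z_1)|\le|z_2-z_1|\,|z_1|$. For the logarithmic factor, the elementary inequality $\ln u\le u-1$ applied to $u=|z_2|/|z_1|\ge 1$ yields $|z_1|\ln(|z_2|/|z_1|)\le|z_2|-|z_1|$, and the right-hand side is at most $|z_2-z_1|$ by the reverse triangle inequality.

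Combining these two estimates with the identity $|\ln(|z_1|^2/|z_2|^2)|=2\ln(|z_2|/|z_1|)$ produces an upper bound of $2|z_2-z_1|^2$ on the left-hand side, which is actually sharper than the factor $4$ claimed by the lemma; the looser constant suffices for the uniqueness argument. There is no serious obstacle here: the real content of the proof is the cancellation of the diagonal terms and the classical estimate $\ln u\le u-1$, and what remains is routine bookkeeping of the symmetry reduction and the trivial zero case.
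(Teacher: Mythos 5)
Your argument is correct, and it is essentially the standard proof of this inequality (the paper itself does not prove the lemma but only cites Lemma~9.3.5 of Cazenave's notes, whose proof runs along exactly these lines: cancellation of the real diagonal terms, the bound $|\IM(z_2\bar z_1)|=|\IM((z_2-z_1)\bar z_1)|\le |z_1|\,|z_2-z_1|$, and $\ln u\le u-1$). Your bookkeeping in fact yields the sharper constant $2$ in place of $4$, which of course still suffices for the $L^2$ uniqueness estimate \eqref{eq:lipL2}.
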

Consider indeed~$u_1$ and~$u_2$  two solutions of~(\ref{eq:logNLS}) as constructed in the previous section. Then the function~$u:=u_1-u_2$ satisfies
\[
 i{\partial}_t u +\frac{1}{2} \Delta u =\lambda \big( \ln\left(|u_1|^2\right)u_1- \ln\left(|u_2|^2\right)u_2\big )
\]
and the regularity of~$u_1$ and~$u_2$ enables one to write an energy estimate in~$L^2$ on this equation. We get directly
 \begin{equation}\label{eq:lipL2}
\begin{aligned}
\frac12 \frac d {dt} \|u(t) \|_{L^2({\mathbb R}^d)}^2& = \lambda  \IM \int_{{\mathbb R}^d} \big( \ln\left(|u_1|^2\right)u_1- \ln\left(|u_2|^2\right)u_2\big ) (\bar u_1-\bar u_2) (t) \, dx
\\
& \le 4 \lambda \|u(t) \|_{L^2({\mathbb R}^d)}^2 
\end{aligned}
\end{equation}
thanks to Lemma~\ref{lemunique}. 
 Uniqueness (and in fact stability in~$L^2$) follows directly, by integration in time.

\subsection{Higher regularity}
\label{sec:higher}

As in \cite{CaHa80}, the idea is to consider time derivatives. This
fairly general idea in the context of nonlinear Schr\"odinger
equations (see \cite{CazCourant}) is all the more precious in the
present framework that the logarithmic nonlinearity is very little
regular. In particular, we emphasize that if $u_0\in H^k({\mathbb R}^d)$, $k\ge
3$, we cannot guarantee in general that this higher regularity is
propagated.
\smallbreak

To complete the proof of Theorem~\ref{positivecauchy}, assume that
$u_0\in {\mathcal F}(H^\alpha)\cap H^2$, for some~$\alpha>0$. We already know that
a unique, global, weak solution $u\in L^\infty_{\rm
  loc}({\mathbb R};{\mathcal F}(H^\alpha)\cap H^1)$ is obtained by the procedure described in
the previous subsection, that is, as the limit of $u_\varepsilon$ solution to
\eqref{eq:logNLSeps}. The idea is that for all $T>0$, there exists $C=C(T)$
independent of $\varepsilon\in (0,1)$ such that
\begin{equation*}
  \sup_{-T\le t\le T}\|{\partial}_t u_\varepsilon(t)\|_{L^2({\mathbb R}^d)}\le C \, .
\end{equation*}
Indeed, we know directly from \eqref{eq:logNLSeps} that 
\begin{equation*}
  {\partial}_t u _{\varepsilon\mid t=0} = \frac{i}{2}\Delta u_0-i\lambda
  \ln\left(\varepsilon+|u_0|^2\right)u_0 \in L^2({\mathbb R}^d) \, ,
\end{equation*}
uniformly in $\varepsilon$, in view of the pointwise estimate
\begin{equation*}
  \left| \ln\left(\varepsilon+|u_0|^2\right)u_0\right|\le C
  \left(|u_0|^{1+\eta}+|u_0|^{1-\eta}\right) \, , 
\end{equation*}
where $\eta>0$ can be chosen arbitrarily small, and $C$ is independent
of $\varepsilon\in (0,1)$. Then we can replace the spatial derivative ${\partial}_j$ in~(\ref{eq:djueps})   with the time derivative~${\partial}_t$, and infer that
${\partial}_t u_\varepsilon\in L^\infty_{\rm loc}({\mathbb R};L^2({\mathbb R}^d))$, uniformly in
$\varepsilon$: by passing to the limit (up
to a subsequence), ${\partial}_t u \in  L^\infty_{\rm loc}({\mathbb R};L^2({\mathbb R}^d))$. Using the
equation \eqref{eq:logNLS}, we infer that $\Delta u \in
L^\infty_{\rm loc}({\mathbb R};L^2({\mathbb R}^d))$.  This concludes the proof of Theorem~\ref{positivecauchy}. \qed

\section{Propagation of Gaussian data} 
\label{sec:gaussian}

As noticed already in
\cite{BiMy76}, an important feature of \eqref{eq:logNLS} is that the evolution of initial Gaussian data remains
Gaussian. Since \eqref{eq:logNLS} is invariant by translation in
space, we may consider centered Gaussian initial data. The following
result is a crucial guide for the general case. 
\begin{theorem}\label{theo:gaussian}
Let~$\lambda >0$,  and consider the initial data
\begin{equation}
\label{eq:gauss-init}
  u_0(x) =  b_0 
  \exp \Big( -\frac{1}{2}\sum_{j=1}^d a_{0j} x_j^2\Big)
\end{equation}
with $b_0,a_{0j}\in {\mathbb C}$, $\alpha_{0j}=\RE a_{0j}>0$. Then the  
solution~$u$ to \eqref{eq:logNLS} is given by
\begin{equation*}
  u(t,x) = b_0\prod_{j=1}^d\frac{1}{\sqrt{r_j(t)}} 
\exp \Big(i\phi_j(t)-\alpha_{0j} \frac{x_j^2}{2r_j^2(t)}+i\frac{\dot r_j(t) }{r_j(t)}\frac{x_j^2}{2}\Big)
\end{equation*}
for some real-valued functions $\phi_j,r_j$ depending on time only,
such that, as $t\to \infty$,
\begin{equation}
\label{eq:asymprj}
  r_j(t)= 2t \sqrt{\lambda \alpha_{0j} \ln t} \, \Big(1+{\mathcal O}\big(\ell(t)\big)\Big) \, ,\quad \dot
  r_j(t)=2\sqrt{\lambda \alpha_{0j}\ln  t} \, \Big(1+{\mathcal O}\big(\ell(t)\big)\Big) \, ,
\end{equation}
where $\ell$ is defined in \eqref{eq:defell}.
In particular, as~$t \to \infty$,
\[
\|u(t)\|_{L^\infty({\mathbb R}^d)} \sim \frac{1}{\left(t\sqrt{\ln t}\right)^{d/2}}\frac{\|u_0\|_{L^2}}{
  \left(2\lambda\sqrt{2\pi}\right)^{d/2}} \, \cdotp 
\]
On the other hand~$u$ belongs to~$L^\infty_{\rm loc}({\mathbb R};H^1({\mathbb R}^d))$
and as~$t \to \infty$ 
\[
\|\nabla u(t)\|_{L^2({\mathbb R}^d)}^2 \Eq t \infty 2\lambda d
\|u_0\|_{L^2({\mathbb R}^d)}^2  \ln  t  .\]
 \end{theorem}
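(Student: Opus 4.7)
The key observation is that the logarithm turns products into sums: if $u(t,x)=\prod_{j}u_{j}(t,x_{j})$ then $\ln|u|^{2}=\sum_{j}\ln|u_{j}|^{2}$, so the nonlinear term stays separable in $x$ and the Gaussian tensor-product structure is preserved by the flow. I would therefore plug the announced ansatz
\begin{equation*}
u(t,x)=b_{0}\prod_{j=1}^{d}\frac{1}{\sqrt{r_{j}(t)}}\exp\!\left(i\phi_{j}(t)-\alpha_{0j}\frac{x_{j}^{2}}{2r_{j}^{2}(t)}+i\frac{\dot r_{j}(t)}{r_{j}(t)}\frac{x_{j}^{2}}{2}\right),
\end{equation*}
with initial data $r_{j}(0)=1$, $\dot r_{j}(0)=-\IM a_{0j}$, $\phi_{j}(0)=0$, directly into \eqref{eq:logNLS} and match coefficients of $x_{j}^{0}$ and $x_{j}^{2}$ in the resulting polynomial-in-$x$ identity. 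The matching of $x_{j}^{2}$-coefficients is equivalent to the scalar ODE
\begin{equation*}
\ddot r_{j}=\frac{\alpha_{0j}^{2}}{r_{j}^{3}}+\frac{2\lambda\alpha_{0j}}{r_{j}},
\end{equation*}
and the $x_{j}^{0}$-coefficient gives a linear ODE for $\phi_{j}$ that is solved by quadrature once $r_{j}$ is known. Uniqueness from Theorem~\ref{positivecauchy} then identifies this Gaussian with the genuine solution.

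The main work is the analysis of the ODE for $r_{j}$, which is a perturbation of the $\tau$-equation \eqref{eq:tau}. Following the strategy of Lemma~\ref{lem:tau}, I would multiply by $\dot r_{j}$ and integrate, obtaining the conservation law
\begin{equation*}
\dot r_{j}(t)^{2}+\frac{\alpha_{0j}^{2}}{r_{j}(t)^{2}}-4\lambda\alpha_{0j}\ln r_{j}(t)=\dot r_{j}(0)^{2}+\alpha_{0j}^{2}.
\end{equation*}
From $\dot r_{j}^{2}\ge 0$ one deduces that $r_{j}$ stays bounded away from $0$ and that the solution is global, while the lower bound $\ddot r_{j}\ge 2\lambda\alpha_{0j}/r_{j}$ forces $r_{j}(t)\to+\infty$. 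Once this is known, the conservation law rewrites as
\begin{equation*}
\dot r_{j}(t)=2\sqrt{\lambda\alpha_{0j}\ln r_{j}(t)}\,\bigl(1+\mathcal O(1/\ln r_{j}(t))\bigr),
\end{equation*}
and the same bootstrap used to prove \eqref{eq:tau} in Lemma~\ref{lem:tau} yields the asymptotics \eqref{eq:asymprj}, the additional $\alpha_{0j}^{2}/r_{j}^{3}$ term being absorbed into the $\mathcal O(\ell(t))$ remainder. The only technical point requiring care is this bootstrap with the $1/r_{j}^{3}$ perturbation, but it mirrors verbatim the argument for $\tau$.

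The norm asymptotics are then reduced to explicit Gaussian integrals. Since
\begin{equation*}
|u(t,x)|^{2}=|b_{0}|^{2}\prod_{j}r_{j}(t)^{-1}\exp\!\bigl(-\alpha_{0j}x_{j}^{2}/r_{j}(t)^{2}\bigr),
\end{equation*}
the supremum is attained at $x=0$ and $\|u(t)\|_{L^{\infty}}=|b_{0}|\prod_{j}r_{j}(t)^{-1/2}$; combined with the identity $\|u_{0}\|_{L^{2}}^{2}=|b_{0}|^{2}\pi^{d/2}\prod_{j}\alpha_{0j}^{-1/2}$ and \eqref{eq:asymprj} this yields the announced $(t\sqrt{\ln t})^{-d/2}$ rate. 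For the gradient, using $\partial_{x_{j}}u=-(\alpha_{0j}/r_{j}^{2}-i\dot r_{j}/r_{j})x_{j}u$ and a standard Gaussian moment computation gives
\begin{equation*}
\|\partial_{x_{j}}u(t)\|_{L^{2}}^{2}=\|u_{0}\|_{L^{2}}^{2}\left(\frac{\alpha_{0j}}{2r_{j}(t)^{2}}+\frac{\dot r_{j}(t)^{2}}{2\alpha_{0j}}\right),
\end{equation*}
and summing in $j$, using $\dot r_{j}(t)^{2}\sim 4\lambda\alpha_{0j}\ln t$, produces $\|\nabla u(t)\|_{L^{2}}^{2}\sim 2\lambda d\|u_{0}\|_{L^{2}}^{2}\ln t$. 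The local-in-time $H^{1}$ bound follows from the continuity and local boundedness of $r_{j}$ and $\dot r_{j}$, together with the fact that $r_{j}$ remains bounded away from $0$.
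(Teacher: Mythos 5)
Your proposal is correct and follows essentially the same route as the paper: a Gaussian ansatz reducing \eqref{eq:logNLS} to the ODE $\ddot r_j=\alpha_{0j}^2/r_j^3+2\lambda\alpha_{0j}/r_j$ (the paper reaches it via $a_j=\alpha_{0j}/r_j^2-i\dot r_j/r_j$, you build it into the ansatz directly), the same first integral, the same comparison with the $\tau$-equation to get \eqref{eq:asymprj}, and the same explicit Gaussian integrals for the $L^\infty$ and $\dot H^1$ asymptotics. The only step you compress, the bootstrap absorbing the $\alpha_{0j}^2/r_j^3$ perturbation, is exactly what the paper carries out in its Lemmas on $r_{\rm eff}$ and $\tau$, so nothing essential is missing.
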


\subsection{From \eqref{eq:logNLS} to ordinary differential equations}
\subsubsection{The Gaussian structure}
As noticed in \cite{BiMy76}, the flow of \eqref{eq:logNLS} preserves any
initial Gaussian structure. We consider the data given by~(\ref{eq:gauss-init}), and we seek the solution
$u$ to \eqref{eq:logNLS} under the form
\begin{equation}\label{eq:gauss}
  u(t,x) = b(t)\exp \Big( {-\frac{1}{2}\sum_{j=1}^d a_j(t) x_j^2}\Big) \, ,
\end{equation}
with $\RE a_j(t)>0$. With $u$ of this form, \eqref{eq:logNLS} becomes
equivalent to 
\begin{equation*}
  i{\partial}_t u +\frac{1}{2}\Delta u = \lambda\Big (\ln |b(t)|^2 -\sum_{j=1}^d
  \RE a_j(t) x_j^2\Big)u \, , \quad u_{\mid t=0}=u_0 \, . 
\end{equation*}
This is a linear Schr\"odinger equation with a time-dependent harmonic
potential, and an initial Gaussian. It is well-known in the context of
the propagation of coherent states (see \cite{Hag80,CoRoBook}) that
the evolution of a Gaussian wave packet under a time-dependent harmonic
oscillator is a Gaussian wave packet. Therefore, it is consistent to
look for a solution to \eqref{eq:logNLS} of this form. 
Notice in particular that
\begin{equation}\label{dispersivenormgaussian}
  \|u(t)\|_{L^p({\mathbb R}^d)} = \left(\frac{2\pi}{p}\right)^{d/(2p)}
  \frac{|b(t)|}{\left(\prod_{j=1}^d\RE a_j(t)\right)^{1/(2p)}} \, ,\quad  1\le p\le \infty \, .
\end{equation}
To prove Theorem~\ref{theo:gaussian} we therefore need to find the asymptotic behavior in time of~$b(t)$ and~$a_j(t)$.

\subsubsection{The ODEs}Plugging \eqref{eq:gauss} into \eqref{eq:logNLS}, we obtain
\begin{align*}
  i\dot b -i\sum_{j=1}^d \dot a_j \frac{x_j^2}{2}b
  -\sum_{j=1}^d\frac{a_jb}{2} +\sum_{j=1}^d a_j^2
  \frac{x_j^2}{2}b = \lambda \Big( \ln\left(|b|^2\right) -\sum_{j=1}^d\left(\RE a_j\right) x_j^2
  \Big)b \, . 
\end{align*}
Equating the constant in $x$ and the factors of $x_j^2$, we get
\begin{align}
  \label{eq:aj}
 & i\dot a_j -a_j^2=2\lambda \RE a_j\,  ,\quad a_{j\mid t=0} =a_{0j}\, ,\\
\label{eq:b}
& i\dot b -\sum_{j=1}^d\frac{a_jb}{2} = \lambda b \ln\left(|b|^2\right),\quad
  b_{\mid t=0}=b_0\, .
\end{align}
We can express the solution to \eqref{eq:b} directly as a function of
the $a_j$'s: indeed
\begin{equation*}
   b(t) = b_0 \exp \Big({-i\lambda t \ln\left(|b_0|^2\right)-\frac{i}{2}\sum_{j=1}^dA_j(t)
    -i\lambda \sum_{j=1}^d\IM\int_0^tA_j(s)ds} \Big)\, ,
\end{equation*}
where we have set
\begin{equation*}
  A_j(t):=\int_0^t a_j(s)ds \, .
\end{equation*}
We also infer from \eqref{eq:aj} that $y:=\RE a_j$ solves
$  \dot y = 2y\IM a_j,$ hence
\begin{equation*}
  \RE a_j(t) = \RE a_{0j} \exp \Big({2\int_0^t\IM a_j(s)ds}\Big) \, . 
\end{equation*}
Since the equations \eqref{eq:aj} are decoupled as $j$ varies, we
simply consider from now on
\begin{equation}
  \label{eq:a}
  i\dot a -a^2=2\lambda \RE a\,  ,\quad a_{\mid t=0} =a_0=\alpha_0+i\beta_0\, ,
\end{equation}
which amounts to assuming $d=1$ in \eqref{eq:logNLS}. 
Following
\cite{LiWa06}, we seek $a$ of the form
\begin{equation*} 
  a = -i\frac{\dot \omega}{\omega}\, \cdotp
\end{equation*}
Then \eqref{eq:a} becomes
\begin{equation*}
  \ddot \omega = 2\lambda \omega\IM\frac{\dot \omega}{\omega}\, \cdotp
\end{equation*}
Introducing the polar decomposition $\omega=r e^{i\theta}$,
we get
\begin{equation*}
 \ddot r -(\dot \theta)^2r = 2\lambda r \dot \theta,\quad
\ddot \theta r +2\dot \theta \dot r=0 \, .
 \end{equation*}
Notice that
\[
  \dot \theta_{\mid t = 0} = \alpha_0 \, , \quad \left(\frac{\dot r}r\right)_{\mid t = 0} = -\beta_0 \, .
\]
We therefore have a degree of freedom to set $r(0) $, and we decide~$r(0)  =1$ so
\begin{equation*}
  \dot \theta(0)=\RE a_0=\alpha_0\, ,\quad \dot r(0)=-\IM a_0=-\beta_0\, .
\end{equation*}
The second equation yields
\begin{equation*}
 \frac{d}{dt}\left( r^2\dot \theta\right) = r\left( 2\dot r \dot
 \theta+r\ddot \theta\right)=0 \, , 
\end{equation*}
so $r^2\dot\theta$ is constant and we can express the problem in terms
of $r$ only:  we write
\begin{equation}\label{eq:quiesta}
  a (t)= \frac{\alpha_0}{r(t)^2}-i\frac{\dot r(t)}{r(t)} \, ,
\end{equation}
with
\begin{equation}\label{eq:rhopointpoint}
   \ddot r =\frac{\alpha_0^2}{r^3} + 2\lambda
   \frac{\alpha_0}{r} \,,\quad r(0)=1 \,,\ \, \dot r(0)= -\beta_0  \,. 
\end{equation}
Multiplying by $\dot r$ and integrating, we infer
\begin{equation}
  \label{eq:rhopointcarre}
  \left(\dot r\right)^2 = \beta_0^2 +\alpha_0^2\left(1-\frac{1}{r^2}\right)
  +4\lambda \alpha_0 \ln r \,. 
\end{equation}
Back to the solution $u$, in the case when~$d=1$ then writing in view of~(\ref{dispersivenormgaussian}) and~\eqref{eq:quiesta}
\begin{equation*}
  \|u(t)\|_{L^\infty({\mathbb R}^d)} = |b(t)| = |b_0| \exp \Big({\frac{1}{2}\int_0^t
    \IM a(s)ds} \Big)= \frac{|b_0|}{\sqrt{r(t)}} 
    \end{equation*}
we find that the study of~$r(t)$ is enough to find the dispersion rate
of~$u(t)$. Once the rate in one space dimension is known, the result
in~$d$ space dimensions follows directly. Moreover, we compute directly
  \[
  \begin{aligned}
  \|\nabla u(t)\|_{L^2({\mathbb R}^d)}^2  &=\frac12\pi^{d/2}
   \frac{|b(t)|^2}{\left(\prod_{j=1}^d\RE a_j(t)\right)^{1/2}} \sum_{j=1}^d\frac{|a_j(t)|^2}{\left(\RE
    a_j(t)\right) }\\
    & = \frac{\pi^{d/2}|b_0|^2}
    {2\left(\prod_{j=1}^d r_j(t)\right){\left(\prod_{j=1}^d\RE a_j(t)\right)^{1/2}} } \sum_{j=1}^d\frac{|a_j(t)|^2}{\left(\RE
    a_j(t)\right) }  \\
   & =   \frac{\pi^{d/2}|b_0|^2}{2
   \sqrt{\prod_{j=1}^d \alpha_{0j}
   } }  \sum_{j=1}^d  \Big((\dot r_j)^2 +\frac{\alpha_0^2}{r_j^2}\Big) \frac1 { \alpha_{0j}}\\
 &=  c+\lambda \frac{\pi^{d/2}|b_0|^2}{\sqrt{
   \prod_{j=1}^d \alpha_{0j}} }  \sum_{j=1}^d  \ln r_j(t) \, . 
 \end{aligned}
\]
As soon as~$r_j(t)\to \infty$ when~$|t |\to \infty$, the~$H^1$
norm therefore becomes unbounded. This is proved to be the case below
(with an explicit rate): actually it can be seen from the rate
provided in Lemma~\ref{eq:rhopointpoint}  below that the energy remains
bounded because the unbounded contributions of both parts of the
energy cancel exactly.  
\subsection{Study of $r(t)$}
\label{sec:dispersive}
The aim of this paragraph is to prove the following result. Recall notation~(\ref{eq:defell}).
\begin{lemma}\label{rhobehaviour}
Let~$r$ solve~{\rm(\ref{eq:rhopointpoint})}. Then 
as~$t \to \infty$, there holds
\[
 r(t)  =  2t \sqrt{\lambda \alpha_{0 } \ln t} \, \Big(1+{\mathcal O}\big(\ell(t)\big)\Big) \, .
\]
 \end{lemma}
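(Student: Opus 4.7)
The strategy is to extract the leading behavior directly from the conservation law \eqref{eq:rhopointcarre} and then bootstrap the asymptotics of $\ln r$ through a well-chosen change of variable.

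First, I would establish basic monotonicity and growth. Since $\alpha_0>0$ and $\lambda>0$, the right-hand side of \eqref{eq:rhopointpoint} is strictly positive as long as $r>0$, so $\ddot r>0$ and $\dot r$ is strictly increasing. The energy identity \eqref{eq:rhopointcarre} keeps $r$ bounded away from $0$ (the $-\alpha_0^2/r^2$ term would force $(\dot r)^2$ to be negative near $r=0$), so the solution is global and bounded below. Because $\dot r$ is increasing and cannot remain negative (otherwise $r$ would decrease to a positive limit and the ODE would give $\ddot r \geq c>0$, a contradiction), there exists $t_0$ with $\dot r(t)>0$ for $t\geq t_0$. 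Consequently $r$ is eventually increasing, and \eqref{eq:rhopointcarre} combined with $\ddot r \geq 2\lambda\alpha_0/r$ forces $r(t)\to\infty$; one can see this by noting that if $r$ were bounded, $(\dot r)^2$ would stay bounded while $\ddot r$ would stay bounded below by a positive constant, a contradiction.

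Next I would extract the leading order from \eqref{eq:rhopointcarre}. For $r$ large, the term $4\lambda\alpha_0\ln r$ dominates, so
\[
(\dot r)^2 = 4\lambda\alpha_0 \ln r\left(1+O\left(\tfrac{1}{\ln r}\right)\right), \qquad \dot r(t)=2\sqrt{\lambda\alpha_0\ln r(t)}\left(1+O\left(\tfrac{1}{\ln r(t)}\right)\right),
\]
where the positive square root is selected thanks to the monotonicity step. Then I would introduce
\[
F(r) := \int_{e}^{r}\frac{d\rho}{\sqrt{\ln\rho}},
\]
which is $C^1$ and increasing on $(e,\infty)$, and integration by parts shows $F(r)=\frac{r}{\sqrt{\ln r}}\bigl(1+O(1/\ln r)\bigr)$ as $r\to\infty$. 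Using the ODE for $r$,
\[
\frac{d}{dt}F(r(t))=\frac{\dot r(t)}{\sqrt{\ln r(t)}}=2\sqrt{\lambda\alpha_0}\left(1+O\left(\tfrac{1}{\ln r(t)}\right)\right).
\]
Integrating from $t_0$ to $t$ yields $F(r(t))=2\sqrt{\lambda\alpha_0}\,t+O\!\left(\int_{t_0}^{t}\frac{ds}{\ln r(s)}\right)$.

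Finally I would bootstrap. A first crude integration (e.g.\ using $\dot r\leq C\sqrt{\ln r}$ and comparison with an explicit solution) gives $r(t)\leq Ct\sqrt{\ln t}$, hence $\ln r(t)=\ln t+O(\ln\ln t)$ for large $t$. This makes the remainder integral $O(t/\ln t)$, so
\[
\frac{r(t)}{\sqrt{\ln r(t)}}\bigl(1+O(1/\ln r(t))\bigr)=2\sqrt{\lambda\alpha_0}\,t\,\bigl(1+O(1/\ln t)\bigr).
\]
Inserting $\ln r(t)=\ln t\bigl(1+O(\ell(t))\bigr)$ on the left, i.e.\ $\sqrt{\ln r(t)}=\sqrt{\ln t}\bigl(1+O(\ell(t))\bigr)$, gives
\[
r(t)=2t\sqrt{\lambda\alpha_0\ln t}\,\bigl(1+O(\ell(t))\bigr),
\]
as required. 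The derivative asymptotics in \eqref{eq:asymprj} then follow by substituting back into $\dot r=2\sqrt{\lambda\alpha_0\ln r}(1+O(1/\ln r))$.

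The main technical obstacle is the bootstrap: the error term $\ell(t)=\ln\ln t/\ln t$ arises precisely because $\ln r$ and $\ln t$ differ by an additive $\frac{1}{2}\ln\ln t$, so one must be careful to first establish the crude bound $r\lesssim t\sqrt{\ln t}$ (upper side) and $r\gtrsim t\sqrt{\ln t}$ (lower side, from the same analysis with opposite inequalities in the energy identity) before claiming the sharp $O(\ell(t))$ remainder. Everything else is routine once monotonicity and global existence are in hand.
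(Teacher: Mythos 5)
Your proof is correct, and it takes a genuinely more direct route than the paper. The paper proceeds in three steps: after the common first step ($r\to\infty$, eventual monotonicity of $\dot r$), it introduces the auxiliary solution $r_{\rm eff}$ of the truncated equation $\ddot r_{\rm eff}=2\lambda\alpha_0/r_{\rm eff}$ with data matched at a large time $T$, solves that autonomous first-order problem by separation of variables (computing $\int dr/\sqrt{4\lambda\alpha_0\ln r+C_0}$ via Dawson-function asymptotics), proves $|r-r_{\rm eff}|\le C t$ by a continuity/maximality argument, refines this to $r-r_{\rm eff}=O(t/\sqrt{\ln t})$ by Taylor-expanding the square roots, and finally compares $r_{\rm eff}$ with $\sqrt{\alpha_0}\,\tau$ so as to import the asymptotics of Lemma~\ref{lem:tau}. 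You instead integrate the exact first integral \eqref{eq:rhopointcarre} directly, using the antiderivative $F(r)=\int_e^r d\rho/\sqrt{\ln\rho}$ (your integration by parts giving $F(r)=\tfrac{r}{\sqrt{\ln r}}(1+O(1/\ln r))$ is the same computation as the paper's Dawson-function step, applied to $r$ rather than to $r_{\rm eff}$) together with a single bootstrap from the crude two-sided bound $ct\le r\le Ct\sqrt{\ln t}$. This avoids the intermediate object $r_{\rm eff}$ and the two comparison arguments entirely, and is self-contained: it does not use Lemma~\ref{lem:tau} as an input and in fact reproves it as the special case $\alpha_0=1$, $\beta_0=0$. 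What the paper's longer route buys is the explicit statement that each $r_j/\sqrt{\alpha_{0j}}$ tracks the single universal function $\tau$ up to $O(t/\sqrt{\ln t})$, which is what the rescaling \eqref{eq:uv} actually exploits later; your argument yields the asymptotics of each $r_j$ but one would still read off the comparison with $\tau$ from the common expansion. Two small points to tighten: the crude upper bound should be justified (e.g.\ $\dot r\le C\sqrt{\ln r}\le C\sqrt r$ gives $r\lesssim t^2$, hence $\ln r\lesssim\ln t$, hence $\dot r\lesssim\sqrt{\ln t}$ and $r\lesssim t\sqrt{\ln t}$), and the remainder integral $\int_{t_0}^t ds/\ln r(s)=O(t/\ln t)$ requires the lower bound $r(s)\gtrsim s$, which you do have from the monotonicity step.
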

 The proof of the lemma is achieved in three steps: first we prove, in Paragraph~\ref{sct:firststep}, that~$r(t) \to \infty$ as~$t \to \infty$. In view of that result   it is natural to approximate the solution to
\eqref{eq:rhopointpoint} by
\begin{equation}
  \label{eq:rhoeff}
    \ddot r_{\rm eff} =2\lambda
   \frac{\alpha_0}{ r_{\rm eff}} \, ,\quad  r_{\rm eff}(T)=r(T)\, ,\
   \dot  r_{\rm eff}(T)= \dot r(T) \, ,
\end{equation}
for $T\gg 1$. This is proved in  Paragraph~\ref{sct:secondstep}, along with a first estimate on the large time behavior of~$ r_{\rm eff}$. The conclusion of the proof is achieved in  Paragraph~\ref{sct:thirdstep}, by proving Lemma~\ref{lem:tau}.
 \subsubsection{First step: $r(t) \to \infty$}\label{sct:firststep}
We readily see from \eqref{eq:rhopointcarre}
that $r$ is bounded from below:
\begin{equation*}
  r(t)\ge \exp\left(-\frac{\beta_0^2+\alpha_0^2}{4\lambda \alpha_0}\right)>0\, ,\quad \forall t\in {\mathbb R}\, .
\end{equation*}
Now let us prove that $r(t)\to +\infty$ as $t\to +\infty$. 
Assume first that $\dot r(0)>0$. Then~\eqref{eq:rhopointpoint} yields~$\ddot r\ge
0$, hence $\dot r(t)\ge \dot r(0)$ for all $t\ge 0$, and 
\begin{equation}\label{rholargerthant}
  r(t)\ge \dot r(0)t+1\Tend t {+\infty} +\infty\, .
\end{equation}
On the other hand, for $\dot r(0)\le 0$, assume that $r$ is
bounded, $r(t)\le 
M$. Then  \eqref{eq:rhopointpoint} yields
\begin{equation*}
  \ddot r(t)\ge \frac{\alpha_0^2}{M^3} + 2\lambda \frac{\alpha_0}{M}\, ,
\end{equation*}
hence a contradiction for $t$ large enough. We infer that for $T$
sufficiently large, there holds~$r(T)\ge 1$ and $\dot r(T)>0$. The first
case then implies~$r(t)\to +\infty$.  

\medskip

Note that we have proved in particular that
\begin{equation}\label{eq:largetimer}
\exists T \ge 1 \, , \quad \dot r(T) >0 \quad \mbox{and} \quad \forall t \ge T \, , \quad r(t) \ge  \dot r(T) (t-T) + 1 \, .
\end{equation}

\subsubsection{Second step: $r(t)\sim r_{\rm{eff}}(t)$ with a rough bound}\label{sct:secondstep}
\begin{lemma}\label{rhoeffbehaviour}
There is~$T $ large enough so that defining~$r_{\rm eff} $ the solution of~{\rm(\ref{eq:rhoeff})} then as~$t \to \infty$, there holds
\[
 |r_{\rm eff}(t)| =   2 t\sqrt{\lambda \alpha_0 \ln t
} +\epsilon (t \sqrt {\ln t}
) \,, \quad \mbox{and} \quad 
|r(t) -  r_{\rm eff}(t)| \le C(T)   t \, , \quad \forall t \ge T 
\, ,
\]
where~$\epsilon(t)/t$ goes to zero as~$t$ goes to infinity. \end{lemma}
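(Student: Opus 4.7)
The plan is to proceed in three steps. First, I fix $T \ge 1$ as given by \eqref{eq:largetimer}, so that $r(T) \ge 1$ and $\dot r(T) > 0$. With the matched initial data $r_{\rm eff}(T) = r(T)$, $\dot r_{\rm eff}(T) = \dot r(T)$, the equation $\ddot r_{\rm eff} = 2\lambda\alpha_0/r_{\rm eff}$ keeps $r_{\rm eff}$ strictly positive, convex and increasing on $[T,\infty)$, so $r_{\rm eff}(t) \to \infty$ as $t\to\infty$. Multiplying \eqref{eq:rhoeff} by $\dot r_{\rm eff}$ and integrating from $T$ gives the conservation law
\begin{equation*}
(\dot r_{\rm eff}(t))^2 = (\dot r(T))^2 + 4\lambda\alpha_0 \ln\left(\frac{r_{\rm eff}(t)}{r(T)}\right),
\end{equation*}
from which $\dot r_{\rm eff}(t) = 2\sqrt{\lambda\alpha_0\,\ln r_{\rm eff}(t)}\,(1 + o(1))$ as $t\to\infty$.

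Second, I extract the leading asymptotic of $r_{\rm eff}$ by separating variables. Writing $d r_{\rm eff}/\sqrt{\ln r_{\rm eff}} = 2\sqrt{\lambda\alpha_0}\,(1+o(1))\,dt$ and integrating, one integration by parts gives
\begin{equation*}
\int_{r(T)}^{r_{\rm eff}(t)} \frac{d\rho}{\sqrt{\ln\rho}} = \frac{r_{\rm eff}(t)}{\sqrt{\ln r_{\rm eff}(t)}}\,(1+o(1)),
\end{equation*}
so that $r_{\rm eff}(t) = 2t\sqrt{\lambda\alpha_0\,\ln r_{\rm eff}(t)}\,(1 + o(1))$. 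A crude upper bound $r_{\rm eff}(t) \lesssim t\sqrt{\ln t}$ (read off from this very relation) forces $\ln r_{\rm eff}(t) = \ln t + O(\ln\ln t) \sim \ln t$, and substituting back yields $r_{\rm eff}(t) = 2t\sqrt{\lambda\alpha_0\ln t} + o(t\sqrt{\ln t})$, which is the first assertion.

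Third, for the comparison $|r(t) - r_{\rm eff}(t)| \le C(T)\,t$, I apply exactly the same argument to the full equation \eqref{eq:rhopointpoint}: the additional term $\alpha_0^2/r^3$ contributes only a bounded correction in the energy identity \eqref{eq:rhopointcarre}, hence $r(t) = 2t\sqrt{\lambda\alpha_0\ln t}\,(1+o(1))$ and in particular $r(t)/r_{\rm eff}(t) \to 1$. Subtracting the two energy identities produces
\begin{equation*}
(\dot r(t))^2 - (\dot r_{\rm eff}(t))^2 = \alpha_0^2\left(\frac{1}{r(T)^2} - \frac{1}{r(t)^2}\right) + 4\lambda\alpha_0 \ln\left(\frac{r(t)}{r_{\rm eff}(t)}\right) = O(1),
\end{equation*}
so factoring out $\dot r + \dot r_{\rm eff} \sim 4\sqrt{\lambda\alpha_0\ln t}$ gives $|\dot r - \dot r_{\rm eff}| = O(1/\sqrt{\ln t})$; integration on $[T,t]$, combined with $r(T) = r_{\rm eff}(T)$, then yields $|r(t) - r_{\rm eff}(t)| = O(t/\sqrt{\ln t}) \le C(T)\,t$.

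The main obstacle is carrying the $o(1)$ corrections rigorously through the separation-of-variables step — in particular, controlling the integration by parts uniformly in $t$ and inverting the implicit relation $r_{\rm eff} \sim 2t\sqrt{\lambda\alpha_0 \ln r_{\rm eff}}$ into an explicit $t$-asymptotic. Once the leading behavior of both $r_{\rm eff}$ and $r$ is in hand, the comparison estimate is a soft consequence of the fact that the two quantities satisfy nearly identical energy identities.
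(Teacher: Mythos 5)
Your proposal is correct. The first part (asymptotics of $r_{\rm eff}$ via the energy identity, separation of variables, and inversion of the implicit relation) is essentially the paper's argument — the paper invokes the Dawson-function asymptotic $\int e^{x^2}dx\sim e^{x^2}/(2x)$ where you integrate by parts, but these are the same computation. Where you genuinely diverge is in the comparison estimate. The paper runs a bootstrap (continuity) argument on $h=r-r_{\rm eff}$: it assumes $|h(t)|\le t$ on a maximal interval, bounds $\dot h$ using the elementary inequality $|\sqrt A-\sqrt B|\le\sqrt{|A-B|}$, and closes the loop to get $|h|\le Ct$; only afterwards, in a separate third step, does it Taylor-expand the square roots to upgrade this to $h(t)=O(t/\sqrt{\ln t})$. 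You instead first extract the leading asymptotics of $r$ itself directly from \eqref{eq:rhopointcarre} (legitimate, since the extra term $\alpha_0^2(1-1/r^2)$ is bounded and the same separation-of-variables argument applies verbatim once $\dot r>0$ is known from \eqref{eq:largetimer}), deduce $r/r_{\rm eff}\to 1$, and then use the conjugate factorization $\dot r-\dot r_{\rm eff}=\bigl((\dot r)^2-(\dot r_{\rm eff})^2\bigr)/(\dot r+\dot r_{\rm eff})=O(1)/\sqrt{\ln t}$. This avoids the bootstrap entirely and delivers the sharper bound $|r-r_{\rm eff}|=O(t/\sqrt{\ln t})$ in one pass, effectively merging the paper's second and third steps; the price is that you must justify the asymptotics of $r$ independently rather than inheriting them from $r_{\rm eff}$, but as you note this is the identical ODE analysis. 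Both routes are sound; yours is arguably the more economical.
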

\begin{proof}
Let us start by studying~$ r_{\rm eff}$. Multiplying~(\ref{eq:rhoeff}) by~$\dot r_{\rm eff}$ and integrating, we get
\begin{align*}
  \left(\dot r_{\rm eff} (t)\right)^2 &= \left(\dot r(T)\right)^2 + 4\lambda
  \alpha_0\ln r_{\rm eff} (t) - 4\lambda
  \alpha_0\ln r(T) \\
&=  4\lambda
  \alpha_0\ln r_{\rm eff} (t) +\beta_0^2 +\alpha_0^2\left(1-\frac{1}{r(T)^2}\right)\, ,
\end{align*}
where we have used \eqref{eq:rhopointcarre} at time $t=T$. Denote by
\begin{equation*}
  C_0 := \beta_0^2 +\alpha_0^2\left(1-\frac{1}{r(T)^2}\right)\approx \beta_0^2
  +\alpha_0^2=|a_0|^2\, , 
\end{equation*}
since $T\gg 1$. By similar arguments as in the proof of~(\ref{eq:largetimer}) in
Paragraph~\ref{sct:firststep}, we have $\dot r_{\rm
  eff}(t)>0$ for all~$t \ge T$, and
  \[
   r_{\rm
  eff}(t) \ge \dot r (T) (t-T)+1
  \]
  hence
\begin{equation*}
  \dot r_{\rm eff} (t) =\sqrt{4\lambda
  \alpha_0\ln r_{\rm eff} (t) +C_0} \, . 
\end{equation*}
Separating the variables,
\begin{equation*}
  \frac{dr_{\rm eff}}{\sqrt{4\lambda
  \alpha_0\ln r_{\rm eff}  +C_0}}=dt \, ,
\end{equation*}
so we naturally consider the anti-derivative
\begin{equation*}
  I:= \int \frac{dr}{\sqrt{4\lambda
  \alpha_0\ln r  +C_0}}\, \cdotp
\end{equation*}
The change of variable 
\begin{equation*}
  y := \sqrt{4\lambda  \alpha_0\ln r +C_0}
\end{equation*}
yields
\begin{equation*}
  I = \frac{1}{2\lambda \alpha_0}\int e^{(y^2-C_0)/(4\lambda \alpha_0)}dy\, .
\end{equation*}
Since for~$x$
 large (Dawson function, see e.g. \cite{AbSt64}),
 \begin{equation*}
  \int e^{x^2} dx\sim \frac{1}{2x }e^{x^2}\,,
\end{equation*}
we infer
\begin{equation*}
  I\sim \frac{r} {\sqrt{4\lambda
  \alpha_0\ln r  +C_0}}\,\cdotp
\end{equation*}
In particular,
\begin{equation*}
 \frac{ r_{\rm eff} (t)}{\sqrt{4\lambda
  \alpha_0\ln r_{\rm eff} (t) +C_0}}\Eq t {+\infty}  t\,,
\end{equation*}
hence
\begin{equation*}
  \frac{r_{\rm eff} (t)}{\sqrt{\ln r_{\rm eff}(t)}}\Eq t {+\infty}
    2t\sqrt{\lambda \alpha_0}\,. 
\end{equation*}
We conclude that
\begin{equation*}
  r_{\rm eff} (t) \Eq t {+\infty}2 {t}\sqrt{\lambda
    \alpha_0\ln t}  \, . 
\end{equation*} 
Now let us prove that~$r$ can be well approximated by~$r_{\rm eff}$. We define~$h:= r-r_{\rm eff}$ and we want to prove that if~$T$ is chosen large enough, then~$h(t) \lesssim t$ when~$t \to \infty$.
We have
\[
\begin{aligned}
\dot h(t) &= \sqrt{4\lambda
  \alpha_0\ln r(t) +\beta_0^2 +\alpha_0^2\Big(1-\frac{1}{r(t) ^2}\Big)} \\
  & \quad -\sqrt{4\lambda
  \alpha_0\ln r_{\rm eff} (t) +\beta_0^2 +\alpha_0^2\Big(1-\frac{1}{r(T)^2}\Big)} \\
  & \le  \sqrt{4\lambda
  \alpha_0 \Big | \ln \frac{r(t) }{r_{\rm eff} (t)}  \Big | + \alpha_0^2\Big(\frac{1}{r(T)^2} - \frac{1}{r(t) ^2}\Big)} \, .
\end{aligned}
\]
Given~$\varepsilon \in (0,1/2)$, let~$T\ge 1$ be large enough so that for all~$t \ge T$
\begin{equation}\label{condition0}
 r_{\rm eff} (t) \ge   {t}\sqrt{\lambda
    \alpha_0\ln t}\end{equation}
and
\begin{equation}\label{condition1}
\displaystyle  \alpha_0^2\Big(\frac{1}{r(T)^2} - \frac{1}{r(t) ^2}\Big) \le \varepsilon^2 \, .
\end{equation}
We shall also need that
\begin{equation}\label{condition2}
\displaystyle \left(   2 \frac{ (\lambda
  \alpha_0)^{1/4} }{\sqrt{\ln T}}  + \varepsilon  \right) \le \frac12 \, \cdot
\end{equation}
 Then noticing that
\[
 \Big | \ln \frac{r(t) }{r_{\rm eff} (t)} \Big |  = \Big |  \ln \Big( 1+ \frac{h(t) }{r_{\rm eff} (t)}\Big) \Big |  
 \le  \frac{|h(t)| }{r_{\rm eff} (t)} 
 \le  \frac{|h(t)| }{ {t}\sqrt{\lambda
    \alpha_0\ln t}}  \le  \frac{|h(t)| }{ {t}\sqrt{\lambda
    \alpha_0\ln T}} 
\]
as soon as~$t \ge T$ thanks to~(\ref{condition0}),
we infer that
\[
\forall t \ge T \, ,\quad \dot h(t)  \le \varepsilon + 2 \sqrt{\lambda
  \alpha_0} \left( \frac{|h(t)| }{ {t}\sqrt{\lambda
    \alpha_0\ln T}}\right)^{1/2} \, , \quad \mbox{with} \quad h(T) = 0 \, .
\]
Our goal is to prove that the function~$t \mapsto h(t)/t$ is bounded for large~$t$, so let~$T^*>T$ be the maximal time such that
\[
\forall t \in [T,T^*) \, , \quad |h(t)| \le t \,  .
\]
Then for~$ t \in [T,T^*)$,  
\[
 \begin{aligned}
 \dot h(t)  \le \varepsilon + 2  (\lambda
  \alpha_0)^{1/4} \frac{1}  { \sqrt{   \ln T } }
\end{aligned}
 \]
 so  thanks to~(\ref{condition2})
 \[
  h (t)   \le \Big( \varepsilon + 2  (\lambda
  \alpha_0)^{1/4} \frac{1}  { \sqrt{  \ln T } } \Big) (t-T) \le \frac t 2\, ,
 \]
  which contradicts the maximality of~$T^*$. 
   The result follows, and Lemma~\ref{rhoeffbehaviour} is proved.
    \end{proof}
 \subsubsection{Third step: $r(t)\sim r_{\rm{eff}}(t)$ with improved bound}\label{sct:thirdstep}
Let us end the proof of Lemma~\ref{rhobehaviour}. By~(\ref{eq:rhopointcarre}) and as in the previous paragraph, we have  for $T$ sufficiently large so that $\dot r(t)\ge
\dot r(T)>0$  for $t\ge T$:
\begin{equation*}
  \dot r =
  \sqrt{C_0+\alpha_0^2\left(\frac{1}{r(T)^2}-\frac{1}{r^2}\right)
    +4\lambda \alpha_0 \ln r} \, ,
\end{equation*}
  \emph{with the same constant $C_0$ as above}:
recall that
\begin{equation*}
  \dot r_{\rm eff}  =
  \sqrt{C_0+4\lambda \alpha_0 \ln r_{\rm eff} } \, . 
\end{equation*}
To lighten notation let us  recall that~$h:=r-r_{\rm eff}$ and let us define
\[
R_{\rm eff}:=C_0+4\lambda \alpha_0 \ln r_{\rm eff}\, .
\]
Then using  a Taylor expansion for  $\dot r$, we have:
\[
\begin{aligned}
  \dot r&  =
  \sqrt{R_{\rm eff}+\alpha_0^2\big(\frac{1}{r(T)^2}-\frac{1}{r^2}\big)
   +4\lambda \alpha_0 \ln
            \big(1+\frac{h}{r_{\rm eff}}\big)} \\
             & =  \sqrt{R_{\rm eff}}
                    \sqrt{   1+ \frac{1}{R_{\rm eff}
 }\alpha_0^2 \big(\frac{1}{r(T)^2}-
\frac{1}{r^2} \big) +4 \frac{ \lambda \alpha_0 }{R_{\rm eff}}\ln       \big(1+\frac{h}{r_{\rm  eff}} \big)  }  \, .
           \end{aligned}
\]
On the one hand we know that~$R_{\rm eff} \to \infty$ and by Lemma~\ref{rhoeffbehaviour} we have~$h \lesssim t$ and~$  r_{\rm  eff} \Eq t \infty {t}\sqrt{ \ln t}  $
so we infer that
\[
 \dot r\Eq t \infty  \sqrt{R_{\rm eff} }
 \Big(
 1+
\frac{1}{2R_{\rm eff} }
\Big(
\alpha_0^2
\big(\frac{1}{r(T)^2}-\frac{1}{r^2}\big) +4\lambda \alpha_0 \ln \big(1+\frac{h}{r_{\rm  eff}}\big)\Big)
 \Big) \, .
\]
As a consequence
\[
\dot r- \dot r_{\rm eff}    \Eq t \infty
  \frac{1}{2\sqrt{R_{\rm eff}} }
\Big(
\alpha_0^2\big(\frac{1}{r(T)^2}-
\frac{1}{r^2}\big)
 +4\lambda \alpha_0 \ln \big(1+\frac{h}{r_{\rm
                                     eff}}
 \big)
\Big)
\]
and since~$ {h}/{r_{\rm
                                     eff}} = {\mathcal O}(1/\sqrt{\ln t})$
we infer that
\[
 \dot r- \dot r_{\rm eff} \Eq t \infty \frac{C(T)}{ \sqrt{\lambda \ln t}} \, \cdotp
\]
By integration, and comparison of diverging integrals, we find
\begin{equation*}
   h  (t)\Eq t \infty C_1 \frac{t}{\sqrt{\ln
        t}} \, ,
\end{equation*}
hence
\begin{equation*}
  r(t) = 2t\sqrt{\lambda \alpha_0\ln t}\,  \Big(1 +{\mathcal O}\big(\ell (t)\big)\Big) \, , 
\end{equation*}
as soon as we know that this holds for $r_{\rm eff}$.  Lemma~\ref{rhobehaviour} is therefore proved, up to the   study of the universal dispersion $\tau$.  

\subsection{Study of the universal dispersion $\tau(t)$: proof of Lemma~\ref{lem:tau}}
It remains to   prove
Lemma~\ref{lem:tau}.  By scaling, we may assume $\lambda =1$, to lighten the notations. 
Introduce the approximate solution
\begin{equation*}
  \tau_{\rm eff}(t) :=2t\sqrt{  \ln t}. 
\end{equation*}
We have clearly
\[
\sqrt {\ln t} = \sqrt {\ln   \tau_{\rm eff}} \left(1  + {\mathcal O}\left(\frac{\ln \ln t}{\ln
    \tau_{\rm eff}}\right) \right)\, . 
\]
 In view of a comparison with \eqref{eq:tau}, which reads
 \[
 \dot\tau = 2 \sqrt {\ln \tau} \, , 
 \]
  write
\[
\begin{aligned}
  \dot   \tau_{\rm eff} &= 2\sqrt{\ln t}+\frac{1}{\sqrt{\ln t}}  
 = 2\sqrt{\ln   \tau_{\rm eff} }
 \left( 
 1+ {\mathcal O} \left( \frac{\ln \ln t}{ \ln   \tau_{\rm eff}}\right)
 \right)   
  = 2\sqrt{\ln   \tau_{\rm eff} } + {\mathcal O} \left( \frac{\ln \ln t}{\sqrt{ \ln t}}\right) \, .
\end{aligned}
\]
Thus,
\begin{align*}
  \dot \tau-\dot   \tau_{\rm eff} & = 2\left(\sqrt{\ln \tau}-\sqrt{\ln   \tau_{\rm eff}}\right) +
                          {\mathcal O} \left( \frac{\ln \ln t}{\sqrt{ \ln t}}\right)\\ 
& = 2\sqrt{\ln   \tau_{\rm eff} +\ln \frac{\tau}{  \tau_{\rm eff}}} - 2\sqrt{\ln   \tau_{\rm eff}}  
+ {\mathcal O} \left( \frac{\ln \ln t}{\sqrt{ \ln t}}\right) \, .
\end{align*}
Since we already know from Lemma~\ref{rhoeffbehaviour} that
$\tau/  \tau_{\rm eff}\to 1$, we obtain
\begin{equation*}
  \dot \tau-\dot   \tau_{\rm eff}  =  {\mathcal O} \left( \frac{\ln \ln t}{\sqrt{ \ln t}}\right) \, , \quad
  \text{and}\quad 
 \tau-  \tau_{\rm eff} =  {\mathcal O} \left(t \frac{\ln \ln t}{\sqrt{ \ln t}}\right) \, , 
\end{equation*}
by integration. This proves Lemma~\ref{lem:tau}. \qed
\smallbreak

Back to the previous section, we simply note that
\begin{equation*}
  \dot r_{\rm eff} -\sqrt{\alpha_0}\dot \tau= \sqrt{C_0+4\lambda
    \alpha_0 \ln r_{\rm eff} } - \sqrt{4\lambda
    \alpha_0 \ln \tau } \, ,
\end{equation*}
with $C_0\not =0$ in general, so the same computation as above yields
\begin{equation*}
   \dot r_{\rm eff} -\sqrt{\alpha_0}\dot \tau=  {\mathcal O}\left(\frac{1}{\sqrt{\ln
     r_{\rm eff}}}\right)={\mathcal O}\left(\frac{1}{\sqrt{\ln t}}\right),
\end{equation*}
hence
\begin{equation*}
   r_{\rm eff} -\sqrt{\alpha_0} \tau =  {\mathcal O} \left( \frac{t}{\sqrt{
      \ln t}}\right) \, , 
\end{equation*}
by integration. This completes the proof of Lemma~\ref{rhobehaviour}. \qed

\section{General case: preparation for the proof of Theorem~\ref{theo:main}}
\label{sec:preparation}
In this section we prove~(\ref{eq:apv}) and~(\ref{eq:moments}) of
Theorem~\ref{theo:main}. 
\subsection{First a priori estimates}
Recall that by definition, $v$ is related to $u$ through the relation
\begin{equation}\label{eq:vu}
 u(t,x)
  =\frac{1}{\tau(t)^{d/2}}v\left(t,\frac{x}{\tau(t)}\right)
\frac{\|u_0\|_{L^2({\mathbb R}^d)}}{\|\gamma\|_{L^2({\mathbb R}^d)}} 
\exp\Big({i\frac{\dot\tau(t)}{\tau(t)}\frac{|x|^2}{2}}\Big) \, ,
\end{equation}
where $\tau$ is the solution to 
\begin{equation*}
  \ddot \tau = \frac{2\lambda }{\tau} \, ,\quad \tau(0)=1 \, ,\quad \dot
  \tau(0)=0 \, .
\end{equation*}
Then $v$ solves
\begin{equation*}
 i{\partial}_t v +\frac{1}{2\tau(t)^2}\Delta_y  v = \lambda  v\ln\left\lvert
    \frac{v}{\gamma}\right\rvert^2-\lambda d v\ln \tau
+2\lambda
  v\ln\left(\frac{\|u_0\|_{L^2({\mathbb R}^d)}}{\|\gamma\|_{L^2({\mathbb R}^d)}} \right)
  \, , 
\end{equation*}
where we recall that $\gamma(y)=e^{-|y|^2/2}$, and the initial datum
for $v$ is
\begin{equation*}
v_{\mid t=0}=v_0:=\frac{\|\gamma\|_{L^2({\mathbb R}^d)}}{\|u_0\|_{L^2({\mathbb R}^d)}} u_0 .
\end{equation*}
 Using the gauge
transform consisting in replacing~$v$ with~$ve^{-i\theta(t)}$ for
\[
\theta(t) =
\lambda d\int_0^t\ln \tau(s)ds -2\lambda
 t \ln(\|u_0\|_{L^2}/\|\gamma\|_{L^2}),
\]
 we may assume that the
last two terms are absent, and we focus our attention on
\begin{equation}
  \label{eq:v}
  i{\partial}_t v +\frac{1}{2\tau(t)^2}\Delta_y  v = \lambda v\ln\left\lvert
    \frac{v}{\gamma}\right\rvert^2,\quad 
v_{\mid t=0}=v_0 \, .
\end{equation}
We compute
\begin{equation*}
 \mathcal E (t):= \IM\int_{{\mathbb R}^d} \bar v(t,y){\partial}_t v(t,y)dy= \mathcal
 E_{\rm kin} (t)+\lambda \mathcal E_{\rm ent}(t) \, ,
\end{equation*}
where
\begin{equation*}
  \mathcal E_{\rm kin}(t):= \frac{1}{2\tau(t)^2}\|\nabla_y v(t)\|_{L^2}^2  
\end{equation*}
is the (modified) kinetic energy and
\begin{equation*}
 \mathcal  E_{\rm ent} (t):= \int_{{\mathbb R}^d} |v(t,y)|^2 \ln\left\lvert
    \frac{v(t,y)}{\gamma(y)}\right\rvert^2dy
\end{equation*}
is a relative entropy.  The transform~\eqref{eq:vu} is   unitary on~$L^2({\mathbb R}^d)$ so the
conservation of mass for $u$ trivially corresponds to the conservation
of mass for $v$: 
\begin{equation}\label{eq:massconservationv}
\|v(t)\| _{L^2}=\|v_0\|_{L^2}=\|\gamma\|_{L^2} \, .
\end{equation} 
The Csisz\'ar-Kullback inequality reads (see
e.g. \cite[Th.~8.2.7]{LogSob}), for $f,g\ge 0$ with $\int f=\int g$,
\begin{equation*}
  \|f-g\|_{L^1({\mathbb R}^d)}^2\le 2 \|f\|_{L^1({\mathbb R}^d)}\int f(x)\ln \left(\frac{f(x)}{g(x)}\right) dx.
\end{equation*}
Thanks to~\eqref{eq:massconservationv},
this  inequality  yields
\begin{equation}\label{eq:CK1}
  \mathcal E_{\rm ent}(t)\ge\frac{1}{2\|\gamma^2\|_{L^1}}\left\| |v(t)|^2-\gamma^2\right\|_{L^1({\mathbb R}^d)}^2 \, ,
\end{equation}
hence in particular $\mathcal E_{\rm ent}\ge 0$. 
 We easily compute
\begin{equation}\label{eq:E}
  \dot {\mathcal  E} = -2\frac{\dot \tau}{\tau}\mathcal E_{\rm kin} \, .
\end{equation}
Ideally, we would like to prove directly $\mathcal E(t)\Tend t \infty 0$. The property
$\mathcal E(t)\to 0$ can be understood as follows: 
\begin{itemize}
\item $\mathcal E_{\rm kin}\to 0$ means that $v$ oscillates in space
  more slowly than 
  $\tau$, hence that the main spatial oscillations of $u$ have been taken into account
    in \eqref{eq:vu} (as a matter of fact, the boundedness of
    $\mathcal E_{\rm
    kin}$ suffices to reach this conclusion).
\item $\mathcal E_{\rm ent}\to 0$ implies $|v(t)|^2\to
  \gamma^2$ strongly in $L^1({\mathbb R}^d)$. 
\end{itemize}
It turns out than in the case of Gaussian initial data, we can infer
from Section~\ref{sec:gaussian} that indeed $\mathcal E(t)\to 0$, each term
going to zero logarithmically in time (see Section~\ref{sec:entropie}
for the case of $\mathcal E_{\rm ent}$). In the
general case, we cannot reach this conclusion. Note however that if we
had $\mathcal E_{\rm kin}\gtrsim 1$, then integrating \eqref{eq:E} we would
get $\mathcal E(t)\to -\infty$ as $t\to \infty$, hence a contradiction. Therefore,
\begin{equation*}
  \exists t_k\to \infty \, ,\quad \mathcal E_{\rm kin}(t_k)\to 0 \,.
\end{equation*}

We now prove the first part of Theorem~\ref{theo:main}, that is,
\eqref{eq:apv} which is recast and complemented in the next lemma.
\begin{lemma}\label{lem:apv}
 Under the assumptions of Theorem~{\rm\ref{theo:main}}, there holds
\begin{equation*}
 \sup_{t\ge 0}\left(\int_{{\mathbb R}^d}\left(1+|y|^2+\left|\ln
    |v(t,y)|^2\right|\right)|v(t,y)|^2dy +\frac{1}{\tau(t)^2}\|\nabla_y
  v(t)\|^2_{L^2({\mathbb R}^d)}\right)<\infty
\end{equation*} 
and
\begin{equation}\label{eq:integralkin}
 \int_0^\infty \frac {\dot \tau(t)}{\tau^3(t)}\|\nabla_y
  v(t)\|^2_{L^2({\mathbb R}^d)} dt<\infty.
\end{equation}
\end{lemma}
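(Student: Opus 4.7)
The approach is to exploit both the nonnegativity and the monotonicity of the modified energy $\mathcal E(t)=\mathcal E_{\rm kin}(t)+\lambda\mathcal E_{\rm ent}(t)$ introduced above.

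First I would check that $\mathcal E(0)<\infty$: since $u_0\in\Sigma=H^1\cap\mathcal F(H^1)$, the same holds for the rescaled datum $v_0$, so $\|\nabla v_0\|_{L^2({\mathbb R}^d)}^2$ and $\int|y|^2|v_0|^2\,dy$ are finite, and the inclusion $\mathcal F(H^\alpha)\cap H^1\subset W$ recalled before Theorem~\ref{positivecauchy}, together with the interpolation bound \eqref{interpolationinequality}, gives $\int|v_0|^2\bigl|\ln|v_0|^2\bigr|\,dy<\infty$. Next, since $\ddot\tau=2\lambda/\tau>0$ and $\dot\tau(0)=0$, one has $\dot\tau\ge0$ on $[0,\infty)$, so by \eqref{eq:E} the map $t\mapsto\mathcal E(t)$ is nonincreasing. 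Combined with the obvious $\mathcal E_{\rm kin}\ge 0$ and with the Csisz\'ar--Kullback bound $\mathcal E_{\rm ent}\ge 0$ from \eqref{eq:CK1} (which uses \eqref{eq:massconservationv}), this yields $0\le\mathcal E_{\rm kin}(t),\,\lambda\mathcal E_{\rm ent}(t)\le\mathcal E(0)$ for every $t\ge 0$. In particular, $\tau(t)^{-2}\|\nabla_y v(t)\|_{L^2({\mathbb R}^d)}^2\le 2\mathcal E(0)$, which is one of the bounds sought.

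The main work is then to separate the two contributions hidden inside $\mathcal E_{\rm ent}$. Writing $-\ln\gamma^2(y)=|y|^2$ yields
$$\mathcal E_{\rm ent}(t)=\int_{{\mathbb R}^d}|y|^2|v(t,y)|^2\,dy+\int_{{\mathbb R}^d}|v(t,y)|^2\ln|v(t,y)|^2\,dy,$$
the second term not being sign-definite. To handle its negative part, I would use the elementary inequality $-s\ln s\le C_\delta s^{1-\delta/2}$ for $s\in(0,1)$ and any small $\delta>0$, i.e.\ $-|v|^2\ln|v|^2\le C_\delta|v|^{2-\delta}\mathbf{1}_{\{|v|<1\}}$. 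Then the interpolation inequality \eqref{interpolationinequality} with $\alpha=1$, combined with the mass conservation \eqref{eq:massconservationv}, yields, for $0<\delta<4/(d+2)$ and uniformly in $t$,
$$\int_{\{|v|<1\}}|v|^{2-\delta}\,dy\le C\Bigl(1+\int_{{\mathbb R}^d}|y|^2|v|^2\,dy\Bigr)^{d\delta/4}.$$
Choosing $\delta$ small enough that also $d\delta/4<1$, and combining with the bound $\int|y|^2|v|^2+\int|v|^2\ln|v|^2\le\mathcal E(0)/\lambda$, Young's inequality absorbs the fractional power of $\int|y|^2|v|^2\,dy$ and gives a uniform-in-time bound on that quantity. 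The bound on $\int|v|^2\bigl|\ln|v|^2\bigr|\,dy$ follows because its positive and negative parts are now individually controlled.

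Finally, for the dissipation bound \eqref{eq:integralkin}, I would rewrite \eqref{eq:E} as
$$-\dot{\mathcal E}(t)=\frac{\dot\tau(t)}{\tau(t)^{3}}\|\nabla_y v(t)\|_{L^2({\mathbb R}^d)}^2\ge 0,$$
and integrate in time: for every $T\ge 0$, $\int_0^T\dot\tau(t)\tau(t)^{-3}\|\nabla_y v(t)\|_{L^2({\mathbb R}^d)}^2\,dt=\mathcal E(0)-\mathcal E(T)\le\mathcal E(0)$ since $\mathcal E(T)\ge 0$, and the desired estimate follows upon letting $T\to\infty$. The delicate step is the interpolation argument used to decouple the $|y|^2|v|^2$ and $|v|^2\ln|v|^2$ contributions; all other estimates are direct consequences of the sign structure of the modified energy and of its dissipation identity.
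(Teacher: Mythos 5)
Your proof is correct and follows essentially the same route as the paper: monotonicity of $\mathcal E$ from \eqref{eq:E}, splitting $\int |v|^2\ln|v|^2$ into its positive and negative parts, controlling the negative part via \eqref{interpolationinequality} with $\alpha=1$ and absorbing the resulting sublinear power $d\epsilon/4<1$, then integrating $-\dot{\mathcal E}=\frac{\dot\tau}{\tau^3}\|\nabla_y v\|_{L^2}^2$ against $\mathcal E\ge 0$ for \eqref{eq:integralkin}. The only cosmetic difference is that you first peel off $\mathcal E_{\rm kin}$ using the Csisz\'ar--Kullback nonnegativity of $\mathcal E_{\rm ent}$ and run the absorption on $\int |y|^2|v|^2$ alone, whereas the paper runs it on the aggregate quantity $\mathcal E_+$.
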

\begin{proof}
 Write
  \begin{equation*}
    \mathcal E_{\rm ent} = \int_{{\mathbb R}^d} |v|^2 \ln|v|^2+\int _{{\mathbb R}^d} |y|^2|v|^2 \,,
  \end{equation*}
and
\begin{equation*}
  \int_{{\mathbb R}^d} |v|^2 \ln|v|^2 = \int_{|v|>1} |v|^2 \ln|v|^2+\int_{|v|<1}|v|^2 \ln|v|^2.
\end{equation*}
We have
\begin{equation*}
  \mathcal E_+:= \mathcal E_{\rm kin}+\lambda \int_{|v|>1} |v|^2 \ln|v|^2 +\lambda \int  _{{\mathbb R}^d} |y|^2|v|^2 \le
   \mathcal E(0)+ \lambda\int_{|v|<1}|v|^2 \ln\frac{1}{|v|^2} \, \cdotp
\end{equation*}
The last term is controlled by
\begin{equation*}
   \int_{|v|<1}|v|^2 \ln\frac{1}{|v|^2}\lesssim \int _{{\mathbb R}^d} |v|^{2-\epsilon} \, ,
\end{equation*}
for all $\epsilon>0$. We conclude thanks to the estimate~\eqref{interpolationinequality} with~$\alpha = 1$:
\begin{equation*}
  \int_{{\mathbb R}^d} |v|^{2-\epsilon} \lesssim \|v\|_{L^2}^{2-(1+d/2)\epsilon}
  \|yv\|_{L^2}^{d \epsilon/2} \, ,
\end{equation*}
for $0<\epsilon< {4}/{(d+2)}$. This implies  
\begin{equation*}
  \mathcal E_+\lesssim 1 +\mathcal E_+^{d\epsilon/4} \, ,
\end{equation*}
and thus $\mathcal E_+\in L^\infty({\mathbb R})$. 

Finally, \eqref{eq:integralkin}  follows from
\eqref{eq:E}, since $\mathcal E(t)\ge 0$ for all $t\ge0$.
\end{proof}
\subsection{Convergence of some quadratic quantities}
Let us prove~(\ref{eq:moments}), as stated in the next lemma.
\begin{lemma}\label{lem:quad-v}
 Under the assumptions of Theorem~{\rm\ref{theo:main}}, there holds
\begin{equation*}
   \int_{{\mathbb R}^d}
  \begin{pmatrix}
    1\\
y\\
|y|^2
  \end{pmatrix}
|v(t,y)|^2dy\Tend t \infty 
 \int_{{\mathbb R}^d}
  \begin{pmatrix}
    1\\
y\\
|y|^2
  \end{pmatrix}
\gamma^2(y)dy \, .
\end{equation*}
\end{lemma}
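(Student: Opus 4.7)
The mass component is trivial: by the normalization in~\eqref{eq:uv} and conservation of $\|u\|_{L^2}$, one has $\|v(t)\|_{L^2}^2 = \|v_0\|_{L^2}^2 = \|\gamma\|_{L^2}^2 = \int\gamma^2\,dy$ for every $t$.

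For the first moment $M_{1,j}(t) := \int y_j|v|^2\,dy$, the plan is to exploit conservation of the angular momentum $J(u)$ together with the $v$-continuity equation. Setting $c := \|u_0\|_{L^2}/\|\gamma\|_{L^2}$, a direct computation of $\nabla u$ from~\eqref{eq:uv} expresses $\IM(\bar u\nabla u)$ in terms of $v$, and, after integration,
\begin{equation*}
J_j(u) \;=\; c^2\dot\tau(t)\,M_{1,j}(t) + \frac{c^2}{\tau(t)}\,\IM\!\int_{{\mathbb R}^d}\bar v\,\partial_j v\,dy.
\end{equation*}
On the other hand, multiplying~\eqref{eq:v} by $\bar v$ and taking imaginary parts yields the continuity equation $\partial_t|v|^2 + \tau^{-2}\nabla\cdot\IM(\bar v\nabla v) = 0$, so $\tau(t)^2\dot M_{1,j}(t) = \IM\int\bar v\,\partial_j v\,dy$. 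Substituting this back gives $(\tau M_{1,j})'(t) = J_j(u)/c^2$, hence $\tau(t)M_{1,j}(t) = M_{1,j}(0) + J_j(u)\,t/c^2$. Dividing by $\tau(t)\sim 2t\sqrt{\lambda\ln t}$ from Lemma~\ref{lem:tau} yields $M_{1,j}(t) = \mathcal O((\ln t)^{-1/2}) \to 0$, which matches $\int y_j\gamma^2\,dy = 0$.

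For the second moment $M_2(t) := \int|y|^2|v|^2\,dy$, the cleanest route is the virial identity for $u$. A standard Pohozaev-type computation on~\eqref{eq:logNLS}, using the algebraic identity $\bar u\,\nabla(u\ln|u|^2) - \ln|u|^2\,\bar u\,\nabla u = \nabla|u|^2$, yields
\begin{equation*}
\frac{d^2}{dt^2}\int_{{\mathbb R}^d}|x|^2|u(t,x)|^2\,dx \;=\; 2\|\nabla u(t)\|_{L^2}^2 + 2d\lambda\,M(u).
\end{equation*}
Inserting $\|\nabla u\|_{L^2}^2 = 2E(u) - 2\lambda\int|u|^2\ln|u|^2\,dx$ from energy conservation, and rewriting in the $v$ variables via~\eqref{eq:uv}---one has $\int|x|^2|u|^2\,dx = c^2\tau^2 M_2$ and $\int|u|^2\ln|u|^2\,dx = c^2\int|v|^2\ln|v|^2\,dy + c^2(2\ln c - d\ln\tau)\|\gamma\|_{L^2}^2$---I obtain
\begin{equation*}
(\tau^2 M_2)''(t) \;=\; 4\lambda d\,\|\gamma\|_{L^2}^2\,\ln\tau(t) + R(t),\qquad R\in L^\infty({\mathbb R}_+),
\end{equation*}
the boundedness of $R$ following from the uniform control of $\int|v|^2\ln|v|^2\,dy$ supplied by Lemma~\ref{lem:apv}.

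The decisive observation is the conservation law for~\eqref{eq:tau}: multiplying it by $\dot\tau$ and using $\tau(0)=1$, $\dot\tau(0)=0$ gives the \emph{exact} identity $\dot\tau(t)^2 = 4\lambda\ln\tau(t)$, whence $(\tau^2)''(t) = 2\dot\tau^2 + 2\tau\ddot\tau = 8\lambda\ln\tau + 4\lambda$. Introducing $\Psi(t) := \tau(t)^2\bigl(M_2(t) - \tfrac{d}{2}\|\gamma\|_{L^2}^2\bigr)$, the leading $\ln\tau$ terms cancel exactly and one is left with $\ddot\Psi(t) = R(t) - 2d\lambda\|\gamma\|_{L^2}^2 \in L^\infty({\mathbb R}_+)$. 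Integrating twice produces $\Psi(t) = \mathcal O(t^2)$, and dividing by $\tau(t)^2 \sim 4\lambda t^2\ln t$ yields $M_2(t) - \tfrac{d}{2}\|\gamma\|_{L^2}^2 = \mathcal O(1/\ln t) \to 0$, matching $\int|y|^2\gamma^2\,dy = \frac{d}{2}\|\gamma\|_{L^2}^2$. I expect this cancellation to be the main obstacle: without the explicit integral $\dot\tau^2 = 4\lambda\ln\tau$, the forcing in the ODE for $\Psi$ would itself be logarithmically large, and the mere boundedness of the entropy term would not suffice to close the argument.
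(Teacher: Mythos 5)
Your proof is correct, and for the mass and first moment it is essentially the paper's argument in different clothing: the paper derives the closed system \eqref{eq:evolI1I2} for $I_1=\IM\int\bar v\nabla v$ and $I_2=\int y|v|^2$ directly from \eqref{eq:v} and observes that $\tau I_2$ is affine in $t$; your identity $(\tau M_{1,j})'=J_j(u)/c^2$ is exactly the statement that $\frac{d}{dt}(\tau I_2)$ is the conserved angular momentum, so the two computations coincide. For the second moment the organization differs: the paper writes the conserved energy $E(u)$ in the $v$-variables, bounds the cross term $\frac{\dot\tau}{\tau}\IM\int v\,y\cdot\nabla\bar v$ by Cauchy--Schwarz as ${\mathcal O}(\dot\tau)={\mathcal O}(\sqrt{\ln t})$ using Lemma~\ref{lem:apv}, and divides by $\frac{(\dot\tau)^2}{2}=2\lambda\ln\tau$; you instead take one more time derivative (the Glassey virial identity $\ddot V=2\|\nabla u\|_{L^2}^2+2d\lambda M$), substitute energy conservation, and integrate twice. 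Both proofs hinge on the same two ingredients --- the uniform entropy/variance bound of Lemma~\ref{lem:apv} and the exact relation $(\dot\tau)^2=4\lambda\ln\tau$ obtained by integrating \eqref{eq:tau} --- and your correctly identified ``decisive cancellation'' is precisely the one the paper exploits. Your route avoids estimating the cross term and yields the marginally better rate ${\mathcal O}(1/\ln t)$ instead of ${\mathcal O}(1/\sqrt{\ln t})$, at the modest cost of invoking the second-order virial identity, whose justification at the regularity $u\in L^\infty_{\rm loc}({\mathbb R};\Sigma)$ is standard (and no more delicate than the formal identities the paper itself uses via the $\varepsilon$-regularization).
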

\begin{proof}
Introduce
\begin{equation*}
  I_1(t) := \IM \int_{{\mathbb R}^d} \overline v (t,y)\nabla_y v(t,y)dy \, ,\quad I_2(t) :=
  \int_{{\mathbb R}^d} y|v(t,y)|^2dy \, . 
\end{equation*}
We compute:
\begin{equation}\label{eq:evolI1I2}
  \dot I_1= -2\lambda I_2 \,,\qquad \dot I_2 = \frac{1}{\tau^2(t)}I_1 \, .
\end{equation}
Set $\tilde I_2:=\tau I_2$: we have $\ddot {\tilde I}_2=0$, hence
(unless the data are well prepared in the sense that $I_1(0)=0$) 
\begin{equation*}
  I_2(t) = \frac{1}{\tau(t)}\left(\dot
 {\tilde I}_2(0)t+\tilde I_2(0)\right)=\frac{1}{\tau(t)}\left(-I_1(0)t+I_2(0)\right)\Eq t \infty 
  \frac{c}{\sqrt{\ln t}} \, ,
\end{equation*}
and
\begin{equation*}
  I_1(t)\Eq t \infty \tilde c\frac{t}{\sqrt{\ln t}}\, \cdotp
  \end{equation*}
In particular,
\begin{equation*}
   \int_{{\mathbb R}^d} y|v(t,y)|^2dy\Tend t \infty 0 = \int_{{\mathbb R}^d}y\gamma(y)^2dy \, .
\end{equation*}
In order to obtain estimates for higher order quadratic observables,
we observe that Cauchy-Schwarz inequality and Lemma~\ref{lem:apv}
  yield 
  \begin{equation*}
    \left|\IM \int v (t,y) y \cdot\nabla_y
    \bar v  (t,y) dy \right| \lesssim \tau(t). 
  \end{equation*}
We now go back to
the conservation of energy for~$u$,
 \begin{equation*}
    \frac{d}{dt}\left(\frac{1}{2}\|\nabla u(t)\|_{L^2}^2 +\lambda \int_{{\mathbb R}^d}
    |u(t,x)|^2\ln |u(t,x)|^2dx\right)=0 \, .
  \end{equation*}
 and translate this property into estimates on~$v$, recalling the mass
 conservation for~$v$ stated in~(\ref{eq:massconservationv}).  
\begin{align*}
  \frac{d}{dt}\Bigg(&
{\mathcal E}_{\rm kin}+\frac{(\dot \tau)^2}{2}\int |y|^2|v|^2-\frac{\dot
\tau}{\tau}\IM \int v (t,y) y \cdot\nabla_y
    \bar v  (t,y) dy +\lambda \int |v|^2\ln|v|^2\\
&  -\lambda d\ln \tau \int |v|^2
+2\lambda \|\gamma\|_{L^2}^2\ln\left(\frac{\|u_0\|_{L^2}}{\|\gamma\|_{L^2}}\right)\Bigg)=0 \, .
\end{align*}
In the above expression, all the terms are
bounded functions of time, but possibly three: from the above
estimate, the third term is ${\mathcal O}(\dot\tau)={\mathcal O}(\sqrt{\ln t})$, while the terms
\[
\frac{(\dot
  \tau)^2}{2}\int |y|^2|v|^2\quad \mbox{and} \quad -\lambda d\ln \tau \int |v|^2 
\]
are of the same order, ${\mathcal O}(\ln t)$.
We infer
\begin{equation*}
  \frac{(\dot \tau)^2}{2} \int |y|^2|v|^2 -\lambda d\ln \tau \int
  |v|^2={\mathcal O}\left(\sqrt{\ln t}\right) \,.
\end{equation*}
Integrating \eqref{eq:tau}, we find
\begin{equation*}
  \frac{(\dot \tau)^2}{2}= 2\lambda \ln\tau \,,
\end{equation*}
hence
\begin{equation*}
  \int |y|^2|v|^2 -\frac{d}{2}\|v\|_{L^2}^2={\mathcal O}\left(\frac{1}{\sqrt{\ln t}}\right)\,,
\end{equation*}
and the lemma is proved.
\end{proof}

At this stage, we therefore have proved Theorem~\ref{theo:main}, up to
the final point regarding the asymptotic profile for $|v|^2$.

\section{End of the proof of Theorem~\ref{theo:main}}
\label{sec:end}
In this section we conclude the proof of Theorem~\ref{theo:main} by
obtaining the weak convergence to a universal profile as stated
in~\eqref{eq:weaklimitv}. 
 
\subsection{Hydrodynamical approach}
\label{sec:hydro}
We recall that the Madelung transform is a classical tool (see
e.g. \cite{Madelung,LandauQ,GM97}, or the survey \cite{CaDaSa12}) to relate
the (nonlinear) Schr\"odinger equation to fluid dynamics equations, via
the change of unknown 
\begin{equation}\label{eq:vaphi}
v(t,y)  = a(t,y)e^{i\phi(t,y)} \, , \quad a, \phi \in {\mathbb R} \, .
\end{equation}
Formally one obtains in our case the system of equations
\begin{equation}\label{eq:madelung-vrai}
  \left\{
    \begin{aligned}
      &{\partial}_t \phi + \frac{1}{2\tau^2}|\nabla_y \phi|^2
         +\lambda \ln \left|
        \frac{a}{\gamma}\right|^2 =\frac{1}{2\tau^2}\frac{\Delta_y
        a}{a}  \\
& {\partial}_t a +\frac{1}{\tau^2}\nabla_y \phi\cdot\nabla_y a
+\frac{1}{2\tau^2}a\Delta_y
  \phi =0 \, ,
    \end{aligned}
\right.
\end{equation}
which is easily related to the compressible Euler equations by using
the change of unknown 
\begin{equation}\label{eq:vareuler}
\rho: = a^2  \, , \quad \Lambda := a \nabla \phi \, , \quad J :=
a \Lambda  \, . 
\end{equation}
Note that in the explicit case of Gaussian initial data studied in
Section~\ref{sec:gaussian}, $\rho$ is bounded in Sobolev spaces
uniformly in time, whereas $\Lambda$ and $J$ are unbounded as~$t\to
\infty$. 
In terms of these hydrodynamical variables, the above system becomes
\begin{equation}
  \label{eq:hydro-madelung}
  \left\{
\begin{aligned}
&{\partial}_t \rho + \frac{1}{\tau^2}\nabla\cdot J=0 \\
& {\partial}_t J +\frac{1}{\tau^2}\nabla\cdot \left(\Lambda \otimes
\Lambda\right)+\lambda \nabla \rho +2\lambda 
y \rho =\frac{1}{4\tau^2}\Delta \nabla\rho
-\frac{1}{\tau^2}\nabla\cdot \left(\nabla
\sqrt\rho\otimes \nabla \sqrt\rho\right) \\
& {\partial}_j J^k-{\partial}_k J^j =
2\Lambda^k{\partial}_j\sqrt\rho-2\Lambda^j{\partial}_k\sqrt\rho\, ,\quad j,k\in
\{1,\dots,d\} \, .
\end{aligned}
\right.
\end{equation}
Note that in the case where the initial data for
\eqref{eq:hydro-madelung} are well prepared, in the sense that they
stem from the polar decomposition of an initial wave function as in
\eqref{eq:vaphi}--\eqref{eq:vareuler}, then the approach
from \cite{AnMa09,AnMa12} (see also 
\cite[Section~5]{CaDaSa12}) can readily be adapted to show that
\eqref{eq:hydro-madelung} holds true in the distributional sense,
thanks to a suitable polar factorization technique. However, we will
see below that the Madelung transform can be by-passed
thanks to a more direct approach, where \eqref{eq:vaphi} is not invoked.
We shall prove that
\[
\rho (t) \mathop{\rightharpoonup}\limits_{t\to \infty} \gamma^2\quad  \text{weakly in }L^1({\mathbb R}^d) \, . 
\]
This will stem from the fact that the weak limit of $\rho$ evolves
according to a Fokker--Planck operator.  We note that 
a formal  link between the hydrodynamical formulation of
\eqref{eq:logNLS}  and the   Fokker--Planck equation can be found
in~\cite{LoMG09,GLM14}.  
\subsection{Heuristics}
\label{sec:heuristics}
Let us explain the heuristics of the proof, which will be made rigorous in the next section.  Formally only retaining the higher order terms (in terms of growth in time) in~\eqref{eq:hydro-madelung} we are led to studying the following simple model
\begin{equation}
  \label{eq:baby-madelung}
  \left\{
\begin{aligned}
&{\partial}_t \rho + \frac{1}{\tau^2}\nabla\cdot J=0 \\
& {\partial}_t J +\lambda \nabla \rho +2\lambda
y \rho =0 \, .
\end{aligned}
\right.
\end{equation}
Note that in the explicit case of the evolution of a Gaussian (recall the computations of Section~\ref{sec:gaussian}), we can
check that in the above simplification, we have indeed eliminated
negligible terms. 
By elimination of~$J$, \eqref{eq:baby-madelung} implies that
\begin{equation*}
{\partial}_t \left(\tau^2{\partial}_t \rho\right) =\lambda \nabla\cdot \left(\nabla+2y\right)\rho=
\lambda L\rho \, ,   
\end{equation*}
where 
\[
L:=\Delta_y + \nabla_y\cdot(2y \,  \cdot)
\]   
is a Fokker--Planck operator. On the other hand,
\begin{equation*}
 {\partial}_t \left(\tau^2{\partial}_t \rho\right) = \tau^2{\partial}_t^2\rho + 2\dot \tau \tau {\partial}_t \rho \, , 
\end{equation*}
so since $\tau^2\ll (\dot \tau \tau)^2$, 
it is natural to change scales in time and define~$s$ such that
\[
\frac{\dot \tau \tau}{\lambda}  {\partial}_t={\partial}_s \,, 
\]
or in other words define the following change of variables:
\begin{equation}\label{eq:defs}
  s = \int\frac{\lambda}{\dot \tau \tau}= \int \frac{\ddot \tau}{2
    \dot \tau} = \frac{1}{2}\ln \dot \tau(t) \, .
\end{equation}
Notice that
\begin{equation}\label{eq:sequivlnlnt}
  s\sim \frac{1}{4} \ln \ln t \, ,\quad t\to \infty \, .
\end{equation}
Then again discarding formally lower order terms we find 
\begin{equation*}
  {\partial}_s \rho = L\rho \, ,
\end{equation*}
for which it is well-known (see for instance~\cite{GaWa05}) that in
large times the solution converges strongly  to an element of the
kernel of~$L$, hence a Gaussian. Notice that the convergence is
exponentially fast in~$s$ variables, so returning to~$t$ variables
produces a logarithmic decay due to~\eqref{eq:sequivlnlnt}: we recover
the logarithmic convergence rate observed in the Gaussian case
(Section~\ref{sec:gaussian}). 

The difficulty to make this argument rigorous is the justification that the lower order terms may indeed be discarded, since 
we have very little control on higher norms on~$v$ to guarantee
compactness in space of the solution: we have more precisely a sharp
control of the momenta of $v$, but rather poor estimates in
$H^1$. More precisely, we do expect $v$ to oscillate rapidly in time
(in view of the Gaussian case), but $\sqrt\rho$ should be bounded in
$H^1$, a property that does not seem easy to prove (because of the
prefactor $1/\tau^2$ in the equation). This is the main obstacle to 
proving strong convergence to a Gaussian in the general case, and
explains why in the end we only obtain a weak convergence result
in~$L^1$. This is made precise in the next section. 

\subsection{End of the proof}
\label{sec:conclusion}
Let us follow the steps of the previous paragraph, this time
neglecting no term. First, we rewrite
  \eqref{eq:hydro-madelung} by using a more direct, and rigorous,
  derivation of a hydrodynamical system. Instead of the definition
\eqref{eq:vareuler}, relying on \eqref{eq:vaphi} (which in turn
demands a rigorous justification of the polar decomposition), we set
\begin{equation}
  \label{eq:decomp-hydro}
  \rho:=|v|^2,\quad J:=\IM\left(\bar v \nabla v\right).
\end{equation}
When \eqref{eq:vaphi} is available, this definition is equivalent to
\eqref{eq:vareuler}, but it has the advantage of being completely
rigorous for free. We then have
\begin{equation}
  \label{eq:hydro-madelung-bis}
  \left\{
\begin{aligned}
&{\partial}_t \rho + \frac{1}{\tau^2}\nabla\cdot J=0 \\
& {\partial}_t J +\lambda \nabla \rho +2\lambda 
y \rho =\frac{1}{4\tau^2}\Delta \nabla\rho
-\frac{1}{\tau^2}\nabla \cdot \RE\left(\nabla v \otimes \nabla \bar v\right) \, .
\end{aligned}
\right.
\end{equation}
By elimination of~$J$,  
\begin{equation*}
  {\partial}_t\left(\tau^2 {\partial}_t \rho\right)  = -{\partial}_t \nabla\cdot  J =  
\lambda L\rho -\frac{1}{4\tau^2}\Delta^2 \rho
  -\frac{1}{\tau^2}\nabla\cdot\left(  \nabla \cdot \RE\left(\nabla v \otimes
  \nabla \bar v\right) \right) \, ,
\end{equation*}
with again 
$
L:=\Delta + \nabla\cdot(2y \, \cdot).$ With the change of
variable~\eqref{eq:defs} we introduce the
 notation $\tilde
\rho(s(t),y):= \rho(t,y)$, and we find for~$\tilde \rho$ the following equation:
\begin{equation}
  \label{eq:dsrho}
  {\partial}_s \tilde \rho -\frac{2\lambda}{(\dot \tau)^2}{\partial}_s \tilde \rho
  +\frac{\lambda }{(\dot \tau)^2} {\partial}_s^2 \tilde \rho = L\tilde \rho
  -\frac{1}{4\lambda \tau^2}\Delta^2 \tilde \rho 
  -\frac{1}{\lambda \tau^2}\nabla\cdot\left(  \nabla \cdot \RE\left(\nabla
  \tilde v \otimes
  \nabla \bar{\tilde v}\right) \right)  \, ,
  \end{equation}
where one should keep in mind that the functions $\tau$ and $\dot
\tau$ also have undergone the change of time variable. In terms of
$s$, Lemma~\ref{lem:tau} yields
  \begin{equation*}
  \dot \tau(s) \Eq s \infty 2\sqrt \lambda e^{2s} \, ,\quad \tau(s)\Eq s
  \infty 2\sqrt \lambda e^{2s+e^{4s}} \, .
\end{equation*}
To make the discussion at the end of the previous subsection more
precise, and explain why we prove a weak convergence
  only, we comment on the various terms in \eqref{eq:dsrho}:
\begin{itemize}
\item The term $\frac{2\lambda}{(\dot \tau)^2}{\partial}_s \tilde \rho$ is
  essentially harmless in the large time limit, for it could be
  handled by a slight modification of the time variable, for
  instance.
\item The term $\frac{\lambda }{(\dot \tau)^2} {\partial}_s^2 \tilde \rho$ is
  expected to be negligible in the large time limit. However, it makes
  \eqref{eq:dsrho} second order in time: one would like to take
  advantage of the smoothing properties of $e^{sL}$, by using
  Duhamel's formula typically, but this approach is delicate in this
  context. Note that it has been established before that in similar
  situations, the parabolic behavior gives the leading order large
  time dynamics, even if the coefficient of ${\partial}_s^2\tilde \rho$ is not
  asymptotically vanishing (\cite{GaRa98}): the proof of this fact
  relies on energy estimates whose analogue in the case of
  \eqref{eq:dsrho} we could not establish.
\item By using the Fourier transform in space, it is easy to compute the
  fundamental solution of 
  \begin{equation*}
    {\partial}_s \tilde \rho= L\tilde \rho
  -\frac{1}{4\lambda \tau^2}\Delta^2 \tilde \rho\, ,
  \end{equation*}
in the same way as~\cite{GaWa02} (without the last term). 
\item A possible idea to prove that the solution to \eqref{eq:dsrho}
  converges (strongly) to the Gaussian $\gamma^2$ as $s\to \infty$
  would be to use the spectral decomposition of $L$, as given for
  instance in \cite{GaWa02}. The main issue is then that we can
  control the last term in \eqref{eq:dsrho} in $L^1$-based spaces, as
  opposed to $L^2$ where the spectral decomposition is available. Note
  that this term is the only one that prevents \eqref{eq:dsrho} from
  being a linear, homogeneous, equation. 
\end{itemize}
In terms of $s$, the time integrability property of $\mathcal E_{\rm
  kin}$ provided in~(\ref{eq:integralkin}) becomes
\begin{equation}\label{eq:int-nabla-v-s}
  \int_0^\infty\left(\frac{\dot \tau(s)}{\tau(s)}\right)^2 \|\nabla\tilde
  v(s)\|_{L^2}^2ds<\infty \, . 
\end{equation}
On the other hand, Lemma~\ref{lem:apv} yields
\begin{equation}\label{eq:borneap}
  \sup_{s\ge 0}\int_{{\mathbb R}^d} \tilde \rho(s,y)\left(1+|y|^2+|\ln \tilde
  \rho(s,y)|\right)dy<\infty \, . 
\end{equation}
Mimicking the general approach of e.g. \cite{De90,FePe99},
for $s\in [-1,2]$ and $s_n\to \infty$, set 
\begin{equation*}
  \tilde\rho_n(s,y) := \tilde \rho(s+s_n,y) \, .
\end{equation*}
From \eqref{eq:borneap} along with the de la Vall\'ee-Poussin   and
Dunford--Pettis Theorems, we get up to extracting a subsequence 
\begin{equation*}
  \tilde \rho_n\rightharpoonup \tilde \rho_\infty \quad \text{in
  }L^p_s(-1,2;L^1_y) \, , 
\end{equation*}
for all $p\in [1,\infty)$. Up to another subsequence,
\begin{equation*}
  \tilde \rho_n(0)\rightharpoonup \tilde \rho_{0,\infty} \quad \text{in
  }L^1_y \, . 
\end{equation*}
Moreover thanks  to \eqref{eq:borneap}    the family $\left(\tilde
\rho( s_n,\cdot\right))_n$ is tight and hence
\[
\int \tilde \rho_{0,\infty}  (y)\, dy = \int \gamma(y)^2 \, dy\,,
\]
and there holds also
\[
\int_{{\mathbb R}^d} \tilde \rho_{0,\infty} (y)\left(1+|y|^2+|\ln \tilde
 \rho_{0,\infty}(y)|\right)dy<\infty \, . 
\]
Property \eqref{eq:int-nabla-v-s} implies that 
  \begin{equation*}
    \frac{1}{\lambda \tau_n^2}\nabla\cdot\left(  \nabla \cdot \RE\left(\nabla
  \tilde v_n \otimes
  \nabla \bar{\tilde v}_n\right) \right) \rightharpoonup 0\text{ in }L^1_s
  W^{-2,1},\quad \text{where }\tau_n(s)=\tau(s+s_n). 
  \end{equation*}
In addition, in \eqref{eq:dsrho}, all the other terms but two
obviously go weakly to  zero,  which yields
\begin{equation}\label{eq:FKfinal}
  \left\{
      \begin{aligned}
        &{\partial}_s \tilde \rho_\infty  =L\tilde \rho_\infty\text{ in
        }\mathcal D '\left((-1,2)\times{\mathbb R}^d\right) ,\\
&\tilde \rho_{\infty\mid s=0} = \tilde \rho_{0,\infty}\in L^1 \, .
      \end{aligned}
\right.
\end{equation}
Thanks to the above bounds on~$\tilde \rho_{0,\infty} $ it is known
(see for instance~\cite[Corollary 2.17]{AMTU}) that the solution~$
\tilde \rho_\infty$ to~(\ref{eq:FKfinal}) is actually defined for all
$s\ge 0$ and 
satisfies
\begin{equation}\label{eq:FKfinallargetime}
\lim_{s\to \infty} \| \tilde \rho_\infty (s) - \gamma^2\|_{L^1({\mathbb R}^d)} = 0\,.
\end{equation}
We now go back to \eqref{eq:hydro-madelung-bis} and show that $\tilde
\rho_\infty $ is independent of $s$. In the $s$ variable, we have
\begin{equation}
  \label{eq:madelung-s}
  \left\{
    \begin{aligned}
      &{\partial}_s \tilde \rho +\frac{\dot \tau}{\lambda \tau}\nabla \cdot \tilde
      J=0 \\
& {\partial}_s \tilde J +\tau\dot \tau \left(\nabla +2y\right)\tilde \rho-\frac{\dot
  \tau}{4\lambda \tau}\nabla\Delta\tilde \rho = -\frac{\dot \tau}{\lambda
  \tau}\nabla \cdot \RE\left(\nabla
  \tilde v \otimes
  \nabla \bar{\tilde v}\right) \, . 
    \end{aligned}
\right.
\end{equation}
Since $J=\IM \bar v\nabla_yv$, \eqref{eq:int-nabla-v-s} implies
\begin{equation*}
  \frac{\dot \tau}{\tau}\tilde J\in L^2_sL^1_y \, .
\end{equation*}
With $\tilde
J_n(s) := \tilde J (s+s_n)$, we have
\begin{equation*}
\frac{\dot \tau}{\tau} \nabla\cdot \tilde  J_n \Tend n\infty
0\quad\text{in } L^2(-1,2;W^{-1,1}) \, , 
\end{equation*}
hence
\begin{equation}\label{eq:stat}
  {\partial}_s \tilde \rho_{\infty}=0 \, . 
\end{equation}
Putting \eqref{eq:FKfinallargetime} and \eqref{eq:stat} together, we infer that
$\tilde \rho_\infty=  \gamma^2$. The limit being
unique, no extraction of a subsequence is needed, and we conclude
\begin{equation*}
  \tilde\rho(s) \mathop{\rightharpoonup}\limits_{s\to \infty} \gamma^2 \quad
  \text{weakly in }L^1({\mathbb R}^d) \, .
\end{equation*}
Theorem~\ref{theo:main} is proved. \qed

\section{Proof of the corollaries}
\label{sec:entropie}

\subsection{Proof of Corollary~\ref{cor:Hs}}

In the energy for $u$,  write  the
potential energy in terms of $v$.
\begin{align*}
  \int_{{\mathbb R}^d}|u(t,x)|^2\ln|u(t,x)|^2dx & = -d\ln
  \tau(t)
\frac{\|u_0\|_{L^2}^2}{\|\gamma\|_{L^2}^2}
\int_{{\mathbb R}^d}|v(t,y)|^2dy\\
&\quad
 +\frac{\|u_0\|_{L^2}^2}{\|\gamma\|_{L^2}^2}
\int_{{\mathbb R}^d}|v(t,y)|^2\ln\left( \frac{\|u_0\|_{L^2}^2}{\|\gamma\|_{L^2}^2}|v(t,y)|^2\right)dy \\
&=-d \|u_0\|_{L^2}^2\ln
  \tau(t) +{\mathcal O}(1),
\end{align*}
from \eqref{eq:apv}. Therefore, the conservation of the energy for $u$
yields
\begin{equation*}
  \|\nabla u(t)\|_{L^2}^2 = 2E_0 +2\lambda d \|u_0\|_{L^2}^2\ln
  \tau(t) +{\mathcal O}(1)\Eq t \infty 2\lambda d \|u_0\|_{L^2}^2
  \ln t,
\end{equation*}
hence the first point of Corollary~\ref{cor:Hs}.
Now fix $0<s<1$. By interpolation, we readily have
\begin{equation*}
  \|u(t)\|_{\dot H^s}\lesssim \|u(t)\|_{L^2}^{1-s}\|u(t)\|_{\dot
    H^1}^s\lesssim \left(\ln t\right)^{s/2},
\end{equation*}
where we have used the conservation of mass and the above asymptotics.
 \smallbreak

The convergence in the Wasserstein distance $W_2$
(Remark~\ref{rem:wasserstein}) implies (see
e.g. \cite[Theorem~7.12]{Vi03}) 
\begin{equation}\label{eq:moment-int}
  \int |y|^{2s}|v(t,y)|^2 dy\Tend t \infty \int
  |y|^{2s}\gamma^2(y)dy. 
\end{equation}
The idea is then to apply a fractional derivative to \eqref{eq:uv},
that is
\begin{equation*}
  u(t,x)
  =\frac{1}{\tau(t)^{d/2}}v\left(t,\frac{x}{\tau(t)}\right)
\frac{\|u_0\|_{L^2}}{\|\gamma\|_{L^2}}
\exp \Big({i\frac{\dot\tau(t)}{\tau(t)}\frac{|x|^2}{2}} \Big) \, . 
\end{equation*}
In order to shortcut this step, we recall a lemma employed in a
somehow similar situation, even though in the context of
semi-classical limit. We therefore simplify the initial statement and
leave out the dependence on the semi-classical parameter:
\begin{lemma}[Lemma~5.1 from \cite{ACMA}]\label{lem:ACMA}
  There exists $C$ such that if $u\in H^1({\mathbb R}^d)$ and  $w$ is such that
  $\nabla w\in L^\infty({\mathbb R}^d)$,
  \begin{equation*}
  \||w|^s u\|_{L^2} \le \|u\|_{\dot H^s} + \|(\nabla -iw)u\|_{L^2}^s
    \|u\|_{L^2}^{1-s} + C \left(1+\|\nabla w\|_{L^\infty}\right)\|u\|_{L^2}. 
  \end{equation*}
\end{lemma}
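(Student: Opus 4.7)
The plan is to prove this by reducing to the $s=1$ case via a frequency decomposition combined with a local gauge transform, following the philosophy of pseudo-differential symbol calculus. The endpoint $s=1$ is elementary: from the identity $iwu=\nabla u-(\nabla-iw)u$ and the triangle inequality one obtains $\|wu\|_{L^2}\le\|\nabla u\|_{L^2}+\|(\nabla-iw)u\|_{L^2}$, matching the stated bound (even without the $\|\nabla w\|_{L^\infty}$ correction). The endpoint $s=0$ is tautological. For $s\in(0,1)$ the inequality is genuinely fractional and cannot be obtained by naive interpolation: a direct application of H\"older's inequality only yields $\||w|^s u\|_{L^2}\le\|wu\|_{L^2}^s\|u\|_{L^2}^{1-s}$, which produces a factor $\|\nabla u\|_{L^2}^s\|u\|_{L^2}^{1-s}$ that is strictly larger than $\|u\|_{\dot H^s}$ in general by Gagliardo--Nirenberg. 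The inequality must therefore detect that the high-frequency part of $u$ costs only $\|u\|_{\dot H^s}$, while the low-frequency part, where $|w|$ is the dominant symbol scale, costs the covariant-derivative interpolation factor.

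Concretely, I would introduce a dyadic Littlewood--Paley decomposition $u=\sum_N P_N u$ and split according to the scale $\|w\|_{L^\infty}$. On blocks with $N\gtrsim|w|$ one has pointwise $|w|^s|P_N u|\lesssim N^s|P_N u|$, whose $L^2$-sum is bounded by $\|u\|_{\dot H^s}$. On blocks with $N\ll|w|$, I would perform a local gauge $P_N u=e^{i\phi}v_N$ with $\nabla\phi=w$; under this transformation, $\|(\nabla-iw)P_N u\|_{L^2}=\|\nabla v_N\|_{L^2}$ and $\||w|^s P_N u\|_{L^2}=\||\nabla\phi|^s v_N\|_{L^2}$. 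A symbolic calculus identity compares $|D|^s(e^{i\phi}v_N)$ with $e^{i\phi}|\nabla\phi|^s v_N$, up to a pseudo-differential remainder of order $s-1$ whose $L^2\to L^2$ norm is controlled by $\|D^2\phi\|_{L^\infty}=\|\nabla w\|_{L^\infty}$. Combining with the Cauchy--Schwarz bound $\||\nabla\phi|^s v_N\|_{L^2}\le\||\nabla\phi|v_N\|_{L^2}^s\|v_N\|_{L^2}^{1-s}\le\|\nabla v_N\|_{L^2}^s\|v_N\|_{L^2}^{1-s}$ and summing over dyadic blocks yields the three terms on the right-hand side.

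The main obstacle is twofold. First, a global phase $\phi$ with $\nabla\phi=w$ exists only when $\mathrm{curl}\,w=0$; for general $w$ one must work with a partition of unity and local gauges, and the patch-mismatches together with $\mathrm{curl}\,w$ both contribute precisely the $\|\nabla w\|_{L^\infty}\|u\|_{L^2}$ correction (since $|\mathrm{curl}\,w|\le\|\nabla w\|_{L^\infty}$). Second, justifying the symbolic calculus identity on low-frequency blocks requires a Calder\'on--Vaillancourt type bound for the operator $|D|^s-|\nabla\phi|^s$, whose remainder symbol has $x$-derivative controlled by $\|D^2\phi\|_{L^\infty}=\|\nabla w\|_{L^\infty}$; this is standard but demands care because $|\xi|^s$ is non-smooth at $\xi=0$, so a paraproduct decomposition is needed. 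An alternative route would be Stein's complex interpolation on the analytic family $T_z u:=|w|^z u$ between $\RE z=0$ (unitary on $L^2$) and $\RE z=1$ (the triangle inequality bound above); however, identifying the interpolation spaces so as to recover the precise right-hand side (in particular the $\|\nabla w\|_{L^\infty}$ dependence) appears no less delicate than the direct argument.
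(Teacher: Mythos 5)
The paper does not prove this statement at all: it is imported verbatim (modulo dropping the semiclassical parameter) from Lemma~5.1 of \cite{ACMA}, so your proposal can only be judged on its own merits, and as written it is a program rather than a proof. Your diagnosis of the difficulty is correct --- plain H\"older gives $\||w|^s u\|_{L^2}\le \|wu\|_{L^2}^{s}\|u\|_{L^2}^{1-s}$ and hence the too-large factor $\|\nabla u\|_{L^2}^s\|u\|_{L^2}^{1-s}$ --- and the $s=1$ endpoint is fine. But the decomposition on which everything rests is ill-posed. You split Littlewood--Paley blocks ``according to the scale $\|w\|_{L^\infty}$'' and then compare ``$N\gtrsim |w|$'' versus ``$N\ll |w|$'': here $N$ is a number while $|w|$ is a function of $x$, and the only hypothesis is $\nabla w\in L^\infty$, so $w$ itself may be (and in the paper's application \emph{is}) unbounded, since there $w(t,x)=\frac{\dot\tau(t)}{\tau(t)}\,x$. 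There is no finite threshold $\|w\|_{L^\infty}$ to split at, and a pointwise comparison between a frequency and $|w(x)|$ would require a genuine space--frequency (second-microlocal or wave-packet) decomposition that you never construct. This is not a technicality: the whole point of the lemma is to handle exactly the regime where $w$ grows linearly.

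Even granting such a decomposition, the two steps that carry the estimate are asserted rather than proved. The claim that $|D|^s(e^{i\phi}v)-e^{i\phi}|\nabla\phi|^s v$ is of order $s-1$ with $L^2$ norm controlled by $\|\nabla w\|_{L^\infty}$ is precisely where the work lies: since $\nabla\phi=w$ is unbounded, the symbol $|\xi+w(x)|^s$ belongs to no uniform symbol class, it is non-smooth on the moving, unbounded set $\{\xi=-w(x)\}$, and the first-order remainder involves $|\xi+w(x)|^{s-1}\nabla w$, which is not uniformly bounded on the low-frequency region to which you restrict it; a Calder\'on--Vaillancourt bound needs sup norms of many derivatives that are simply not available here. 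In addition, the block-wise bounds do not resum: $\nabla-iw$ does not commute with $P_N$ (the commutator $[P_N,w]$ costs $N^{-1}\|\nabla w\|_{L^\infty}$, not summable over small $N$), and $\sum_N\|\nabla v_N\|_{L^2}^s\|v_N\|_{L^2}^{1-s}$ is not dominated by $\|(\nabla-iw)u\|_{L^2}^s\|u\|_{L^2}^{1-s}$ by any inequality you invoke. A more promising and far shorter route is operator-theoretic rather than microlocal: from $\|wu\|_{L^2}^2\le 2\|\nabla u\|_{L^2}^2+2\|(\nabla-iw)u\|_{L^2}^2$ one gets a quadratic-form inequality between $|w|^2$ and $2(-\Delta)+2\bigl(-(\nabla-iw)^2\bigr)$; L\"owner--Heinz monotonicity of $t\mapsto t^s$ and subadditivity of fractional powers for sums of positive operators then split the right-hand side, and the spectral theorem plus H\"older give $\langle(-(\nabla-iw)^2)^s u,u\rangle\le\|(\nabla-iw)u\|_{L^2}^{2s}\|u\|_{L^2}^{2-2s}$, the additive term $C(1+\|\nabla w\|_{L^\infty})\|u\|_{L^2}$ being there to absorb the constants and low-frequency losses. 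In any case, you should either supply the missing microlocal estimates or consult the proof in \cite{ACMA}.
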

In \cite{ACMA}, $w$ corresponds to the gradient of rapid oscillations
carried by an 
exponential, so we naturally introduce
\begin{equation*}
  w(t,x) = \frac{\dot\tau(t)}{\tau(t)}x.
\end{equation*}
In the present framework, Lemma~\ref{lem:ACMA} yields:
\begin{equation*}
 (\dot \tau)^s\||y|^sv(t)\|_{L^2}  
\frac{\|u_0\|_{L^2}}{\|\gamma\|_{L^2}}
\le \|u(t)\|_{\dot H^s} +
  \left\|\frac{1}{\tau}\nabla v(t)\right\|_{L^2}^s
\frac{\|u_0\|_{L^2}}{\|\gamma\|_{L^2}^s}
  + C\left(1 + \frac{\dot \tau}{\tau}\right)\|u_0\|_{L^2}.
\end{equation*}
The result follows readily: the behavior of the left hand side
is given by Lemma~\ref{lem:tau} and \eqref{eq:moment-int}, and all the
terms of the right hand side 
are bounded, but the first one. 
\subsection{Proof of Corollary~\ref{cor:entropie-gauss}}

In view of the tensorization in Theorem~\ref{theo:gaussian}, we prove
Corollary~\ref{cor:entropie-gauss} in the case~$d=1$ to lighten the
  notations, and we assume
\begin{equation*}
  u_0(x)=  b_0\exp \Big ({-a_0(x-x_0)^2/2} \Big) \, ,
\end{equation*}
with $b_0,a_0\in {\mathbb C}$, $\RE a_0=\alpha_0>0$. We start with an initial
center $x_0$ to show that in terms of $v$, the center is eventually
zero (like in \cite{GaWa05}). Recall that we have
\begin{equation*}
    u(t,x) = b_0\frac{1}{\sqrt{r(t)}} e^{i\phi(t)}
\exp \Big ({-\alpha_{0} \frac{(x-x_0)^2}{2r^2(t)}+i\frac{\dot r(t) }{r(t)}\frac{(x-x_0)^2}{2}} \Big ) \, ,
\end{equation*} 
with  $r$ solution to \eqref{eq:rhopointpoint}, $r(0)=1$, $\dot
r(0)=-\IM a_0$. We thus have, 
since
\begin{equation*}
  \|u_0\|_{L^2} = |b_0|\left(\frac{\pi}{\alpha_0}\right)^{1/4}\text{ and
  }\|\gamma\|_{L^2}= \pi^{1/4}\, ,
\end{equation*}
\begin{equation*}
\begin{aligned}
  v(t,y) =
  \frac{b_0}{|b_0|}\alpha_0^{1/4}
\sqrt{\frac{\tau(t)}{r(t)}}e^{i\phi(t)}
&\exp \Big ({-\alpha_0\frac{\tau^2}{r^2}\frac{y^2}{2}+\alpha_0\frac{\tau}{r^2}yx_0-
\alpha_0\frac{x_0^2}{2r^2}} \Big) \\
& \qquad \times \exp \Big( {i\left(\frac{\dot r}{r}-\frac{\dot
  \tau}{\tau}\right) \tau^2\frac{y^2}{2}-i\frac{\dot r}{r} \tau y
x_0+i\frac{\dot r}{r} \frac{x_0^2}{2} } \Big) \, .
\end{aligned}
\end{equation*}
In particular,
\begin{equation*}
  |v(t,y)|^2 = \frac{\tau(t)}{r(t)}\sqrt{\alpha_0}
\exp \Big ( {-\alpha_0\frac{\tau^2}{r^2}y^2
+2\alpha_0\frac{\tau}{r^2}yx_0-
\alpha_0\frac{x_0^2}{r^2}} \Big) \, .
\end{equation*}
Therefore, the relative entropy is 
\begin{align*}
  \mathcal E_{\rm ent}(t)& = \int_{{\mathbb R}} |v(t,y)|^2\ln \left(\frac{
    |v(t,y)|^2}{\gamma^2(y)}\right)dy \\
& = \ln\left(\sqrt{\alpha_0} \frac{\tau(t)}{r(t)}\right)\|v_0\|_{L^2}^2-
\left(\alpha_0\frac{\tau(t)^2}{r(t)^2} - 1\right)\int_{{\mathbb R}} 
 y^2 |v(t,y)|^2 dy\\
&\quad +2\alpha_0 x_0\frac{\tau(t)}{r^2(t)} \int_{{\mathbb R}}  y |v(t,y)|^2dy -
  \alpha_0\frac{x_0^2}{r^2(t)} \|v_0\|_{L^2}^2\Tend t \infty 0 \, ,
\end{align*}
where we have used the properties of the solutions to
\eqref{eq:rhopointpoint} and \eqref{eq:tau}, established in
Section~\ref{sec:gaussian}. The end of the corollary simply stems from
the  Csisz\'ar-Kullback inequality \eqref{eq:CK1}.

\section{Stability of the dynamics under a power-like perturbation}
\label{sec:power}
In this section, we give the main arguments to adapt the previous
proofs in the case of \eqref{eq:power},
\begin{equation*}
   i{\partial}_t u + \frac{1}{2}\Delta u = \lambda u \ln(|u|^2)+
  \mu|u|^{2\sigma}u\,,\quad u_{\mid t=0}=u_0 \, .
\end{equation*}
\subsection{Construction of the solution}
To show that \eqref{eq:power} has a solution $u\in L^\infty_{\rm
  loc}({\mathbb R};\Sigma)$ whose mass, angular momentum and energy as
conserved, we can follow exactly the same strategy as in
Subsection~\ref{sec:existence}, and consider the family of functions~$u_\varepsilon$ solution to 
\begin{equation*}
   i{\partial}_t u_\varepsilon + \frac{1}{2}\Delta u_\varepsilon = \lambda u_\varepsilon \ln(\varepsilon+|u_\varepsilon|^2)+
  \mu|u_\varepsilon|^{2\sigma}u_\varepsilon,\quad u_{\varepsilon\mid t=0}=u_0.
\end{equation*}
For fixed $\varepsilon>0$, the above equation has a unique, local, solution
$u\in C([-T,T];\Sigma)\cap
L^{\frac{4\sigma+4}{d\sigma}}([-T,T];L^{2\sigma+2})$, thanks to
Strichartz estimates (see 
e.g. \cite{CazCourant,Ginibre}). To see that the solution is global in
time for fixed $\varepsilon>0$ and satisfies estimates which are uniform in
  $\varepsilon$, we notice that the momentum
$I_{\varepsilon,1}=\int\left\langle x\right\rangle^2|u_\varepsilon(t,x)|^2dx$ is controlled exactly like
in Subsection~\ref{sec:existence}. On the other hand, we compute
\begin{align*}
  \frac{d}{dt}&\left(\frac{1}{2}\|\nabla u_\varepsilon\|_{L^2}^2+\lambda \int
  |u_\varepsilon|^2\ln \left(\varepsilon+|u_\varepsilon|^2\right)
  +\frac{\mu}{\sigma+1}\int|u_\varepsilon|^{2\sigma+2}\right) \\
&\qquad= \lambda \int
  \frac{|u_\varepsilon|^2}{\varepsilon+|u_\varepsilon|^2}{\partial}_t\left(|u_\varepsilon|^2\right),
\end{align*}
and since ${\partial}_t\left(|u_\varepsilon|^2\right)=-\IM(\bar u\Delta u)$, Gronwall's lemma yields
estimates which are uniform in $\varepsilon>0$, as desired, and we conclude
like in Subsection~\ref{sec:existence}.
\smallbreak
We note that the uniqueness argument used in the case $\mu=0$ can be
repeated provided that $u(t,\cdot)\in L^\infty({\mathbb R}^d)$, which is
granted if $d=1$ by Sobolev embedding. We cannot conclude ``as usual'' thanks
to Strichartz estimates (case $\lambda=0$), since the logarithmic
nonlinearity is not Lipschitz, and so uniqueness is not clear for~$d\ge 2$. 
\subsection{Estimates for $v$}
We note that in the presence of a power-like nonlinearity, we can no
longer rely on explicit computations in the case of Gaussian initial
data. This is not a problem, since the main role of the explicit
computations was to suggest the change of unknown function
\begin{equation*}
 u(t,x)
  =\frac{1}{\tau(t)^{d/2}}v\left(t,\frac{x}{\tau(t)}\right)
\frac{\|u_0\|_{L^2({\mathbb R}^d)}}{\|\gamma\|_{L^2({\mathbb R}^d)}} 
\exp \Big({i\frac{\dot\tau(t)}{\tau(t)}\frac{|x|^2}{2}} \Big) .
\end{equation*}
Now $v$ solves
\begin{equation*}
 i{\partial}_t v +\frac{1}{2\tau(t)^2}\Delta_y  v = \lambda v\ln\left\lvert
    \frac{v}{\gamma}\right\rvert^2-\lambda d v\ln \tau
+2\lambda
  v\ln\left(\frac{\|u_0\|_{L^2({\mathbb R}^d)}}{\|\gamma\|_{L^2({\mathbb R}^d)}} \right)+\frac{\tilde
  \mu}{\tau^{d\sigma} }|v|^{2\sigma}v
  \, , 
\end{equation*}
with 
\[
\tilde \mu := \left(\frac{\|u_0\|_{L^2({\mathbb R}^d)}}{\|\gamma\|_{L^2({\mathbb R}^d)}}
\right)^{2\sigma}\mu\,.
\]
We simply note that $\tilde \mu>0$, and the numerical value of $\mu$
is unimportant. Like before, the previous two terms can be absorbed by a
gauge transform, and we consider
\begin{equation*}
  i{\partial}_t v +\frac{1}{2\tau(t)^2}\Delta_y  v = \lambda v\ln\left\lvert
    \frac{v}{\gamma}\right\rvert^2+\frac{\tilde
  \mu}{\tau^{d\sigma}} |v|^{2\sigma}v\,,\quad v_{\mid t=0}=v_0\,. 
\end{equation*}
Noticing that the pseudo-energy
\begin{equation*}
  \mathcal E(t)= \frac{1}{2\tau(t)^2}\|\nabla_y v(t)\|_{L^2}^2 +\lambda \int
  |v|^2\ln \left|\frac{v(t,y)}{\gamma(y)}\right|^2dy+\frac{\tilde
    \mu}{(\sigma+1)\tau(t)^{d\sigma}}\int |v(t,y)|^{2\sigma+2}dy
\end{equation*}
satisfies
\begin{equation*}
  \dot{\mathcal E}= -2\frac{\dot
    \tau}{\tau}\times\frac{1}{2\tau(t)^2}\|\nabla_y v(t)\|_{L^2}^2 -
  d\sigma \frac{\dot
    \tau}{\tau}\times \frac{\tilde
    \mu}{(\sigma+1)\tau(t)^{d\sigma}}\|v(t)\|^{2\sigma+2}_{L^{2\sigma+2}}\,,
\end{equation*}
Lemma~\ref{lem:apv} is readily generalized to this case, with in
addition
\begin{equation*}
  \sup_{t\ge 0}\frac{1}{\tau(t)^{d\sigma}}\| v(t)\|^{2\sigma+2}_{L^{2\sigma+2}}<\infty\,,
\end{equation*}
and
property \eqref{eq:integralkin} is enriched with the extra estimate
\begin{equation}\label{eq:integralpot}
  \int_0^\infty\frac{\dot
    \tau(t)}{\tau(t)^{d\sigma+1}}\|v(t)\|_{L^{2\sigma+2}}^{2\sigma+2}dt<\infty\,. 
\end{equation}
The convergence of the momenta (Lemma~\ref{lem:quad-v}) is adapted in
a straightforward manner as well: the system \eqref{eq:evolI1I2} which
yields the convergence of the first momentum remains the same, and the
convergence of the momentum of order two follows from the same
argument. 
\subsection{Hydrodynamics}
To prove the weak convergence of $|v|^2$ to $\gamma^2$, we mimic the
proof presented in Section~\ref{sec:end}. Essentially, one new term
appears in \eqref{eq:hydro-madelung-bis}, 
\begin{equation*}
  \left\{
\begin{aligned}
&{\partial}_t \rho + \frac{1}{\tau^2}\nabla\cdot J=0 \\
& {\partial}_t J +\lambda \nabla \rho +2\lambda 
y \rho =\frac{1}{4\tau^2}\Delta \nabla\rho
-\frac{1}{\tau^2} =\nabla \cdot \RE (\nabla v\otimes
  \nabla \bar v)= -\frac{\tilde
  \mu}{\tau^{d\sigma}}\frac{\sigma}{\sigma+1}\nabla\left( \rho^{\sigma+1}\right)\, .
\end{aligned}
\right.
\end{equation*}
Working with the time variable $s$ defined in \eqref{eq:defs}, the
last two terms vanish in the large time weak limit: the term in
$|\nabla v|^2$ thanks to \eqref{eq:integralkin} (like before), and the
last term thanks to \eqref{eq:integralpot} (in the same fashion). 
\subsection{Growth of Sobolev norms}
To prove the final part of Theorem~\ref{theo:power}, we can resume
exactly the same arguments as in the proof of Corollary~\ref{cor:Hs}. 
\bibliographystyle{siam}

\bibliography{biblio}

\end{document}